\newtheorem{theorem}{Theorem}[section]
\newtheorem{proposition}[theorem]{Proposition}
\newtheorem{corollary}[theorem]{Corollary}
\newtheorem{lemma}[theorem]{Lemma}
\newtheorem{question}[theorem]{Question}
\theoremstyle{definition}
\newtheorem{example}[theorem]{Example}
\newtheorem{construction}[theorem]{Construction}
\theoremstyle{remark}
\newtheorem*{remark}{Remark}
\numberwithin{equation}{section}
\newcommand{\mb}[1]{{\textbf {\textit#1}}}
\def\C{\mathbb C}
\def\R{\mathbb R}
\def\Z{\mathbb Z}
\def\ge{\geqslant}
\def\le{\leqslant}
\newcommand{\Ker}{\mathop{\rm Ker}}
\newcommand{\bin}[2]{{\textstyle\binom{#1}{#2}}}
\newcommand{\OSU}{\varOmega^{\mbox{\scriptsize\textit{SU}}}}
\newcommand{\OU}{\varOmega^U}
\newcommand{\cs}{\mathbin{\#}}
\begin{document}

\title[On toric generators in $U$- and $SU$-bordism rings]{On
toric generators in the unitary and\\ special unitary bordism
rings}

\author{Zhi L\"u}
\address{\noindent School of Mathematical Sciences, Fudan University,
Shanghai, 200433, People's Republic of China}
\email{zlu@fudan.edu.cn}
\urladdr{http://homepage.fudan.edu.cn/zhilu/}

\author{Taras Panov}
\address{Department of Mathematics and Mechanics, Moscow
State University, Leninskie Gory, Moscow, 119991, Russia,
\newline Institute for Theoretical and Experimental Physics,
Moscow, Russia,
\newline Institute for Information Transmission Problems,
Russian Academy of Sciences}
\email{tpanov@mech.math.msu.su}
\urladdr{http://higeom.math.msu.su/people/taras/}

\thanks{L\"u was supported by the NSFC, grants 11371093, 11431009 and 11661131004. Panov was supported by the Russian
Foundation for Basic Research (grants 14-01-00537 and 16-51-55017)
and a grant from Dmitri Zimin's `Dynasty' foundation. Both authors
were supported by the Key Laboratory of Mathematics for
Nonlinear Sciences in Fudan University, Chinese Ministry of
Education.}


\subjclass[2010]{57R77 (primary), 14M25 (secondary)}

\begin{abstract}    
We construct a new family of toric manifolds generating the
unitary bordism ring. Each manifold in the family is the complex
projectivisation of the sum of a line bundle and a trivial bundle
over a complex projective space. We also construct a family of
special unitary quasitoric manifolds which contains polynomial
generators of the special unitary bordism ring with 2 inverted in
dimensions $>8$. Each manifold in the latter family is obtained
from an iterated complex projectivisation of a sum of line bundles
by amending the complex structure to make the first Chern class
vanish.
\end{abstract}

\maketitle


\section{Introduction}
Finding geometric representatives of bordism classes is a
classical problem on the borders of geometry and topology. The
theory of bordism and cobordism is one of the deepest and most
influential parts of algebraic topology, which experienced a
spectacular development in the 1960s. Although the original
definition of bordism, going back to Pontryagin and Thom, was very
geometric, it soon became clear that elaborate homotopy-theoretic,
algebraic and number-theoretic techniques were required to obtain
structural results on bordism groups and (co)bordism rings.

Most calculations of bordism rings of a point for the classical
series of Lie groups were settled by coordinated efforts of many
topologists by the end of the 1960s (with the notable exception of
symplectic bordism, whose structure is still not described
completely). These results were summarised in the monograph by
Stong~\cite{ston68}. Nevertheless, it has remained a challenging
task to describe particular geometric representatives for
generators of bordism rings (which tend to be rings of polynomials
when $2$ is inverted) and other `special' bordism classes. The
importance of this problem was much emphasised in the original
works such as Conner and Floyd~\cite{co-fl66m}.

Over the rationals, the bordism rings are generated by projective
spaces, but the integral generators are more subtle as they
involve divisibility conditions on characteristic numbers. One of
the few general results on geometric representatives for bordism
classes known from the early 1960s is that the complex bordism
ring $\OU$, which is an integral polynomial ring, can be generated
by the so-called \emph{Milnor hypersurfaces} $H(n_1,n_2)$. These
are hyperplane sections of the Segre embeddings of products $\C
P^{n_1}\times\C P^{n_2}$ of complex projective spaces. Similar
generators exist for unoriented and oriented bordism rings.

The early progress was impeded by the lack of examples of
higher-dimensional (stably) complex manifolds for which the
characteristic numbers can be calculated explicitly. With the
appearance of \emph{toric varieties} in the late 1970s and
subsequent development of toric topology (see Buchstaber and Panov~\cite{bu-pa}), a host of
concrete examples of complex manifolds with large symmetry groups
has been produced for which characteristic numbers can be
calculated effectively using combinatorial-geometric techniques.

In~\cite{bu-ra98r}, Buchstaber and Ray constructed a set of
generators for $\OU$ consisting entirely of complex projective
toric manifolds $B(n_1,n_2)$, which are projectivisations of sums
of line bundles over bounded flag manifolds. Later it was shown
in Buchstaber, Panov and Ray~\cite{b-p-r07} that one can get a geometric representative in
\emph{every} complex bordism class if toric manifolds are relaxed
to \emph{quasitoric} ones; the latter still have a `large torus'
action, but are only stably complex instead of being complex.
Characteristic numbers of toric manifolds satisfy quite
restrictive conditions (e.g. their Todd genus is always~1) which
prevent the existence of a toric representative in every bordism
class; quasitoric manifolds enjoy more flexibility. We note that
representing \emph{polynomial} generators of $\OU$ by toric
manifolds remains open; some progress has been made recently
by Wilfong~\cite{wilf}.

Here we consider a family of projective toric manifolds obtained
by iterated projectivisation of sums of line bundles, starting
from a complex projective space. Such iterated projectivisations
are also known as \emph{generalised Bott
manifolds} (see Masuda and Suh~\cite{ma-su08} and Buchstaber and Panov~\cite[\S7.8]{bu-pa}). Our first result
(Theorem~\ref{projcobgen}) shows that the complex bordism ring
$\OU$ can be generated by the most simple nontrivial 2-stage
projectivisations: manifolds $L(n_1,n_2)=\C P(\xi)$, where $\xi$
is the sum of a tautological line bundle and an $n_2$-dimensional
trivial bundle over~$\C P^{n_1}$. This new toric generator set is
somewhat simpler than either of the set of Milnor hypersurfaces
$\{H(n_1,n_2)\}$ or Buchstaber and Ray's toric set
$\{B(n_1,n_2)\}$.

We proceed by providing explicit families of quasitoric
$SU$-manifolds which contain polynomial generators of the
$SU$-bordism ring $\OSU\otimes\Z[\frac12]$ (Theorem~\ref{mainth}).
In fact, our quasitoric $SU$-manifolds are genuinely
indecomposable and indivisible elements in $\OSU$ (integrally,
without inverting any prime), however $\OSU$ is not a polynomial
ring.

We recall that a stably complex (or unitary) manifold $M$ is
\emph{special unitary} (an \emph{$SU$-manifold} for short) if
$c_1(M)=0$. A renewed interest to this class of manifolds has been
stimulated by the development of geometry motivated by physics;
the notion of a \emph{Calabi--Yau manifold} plays a central role
here. By a Calabi--Yau manifold one usually understands a K\"ahler
$SU$-manifold; it has a Ricci flat metric by the theorem of Yau.
We note however that our $SU$-manifolds are rarely K\"ahler.

As was observed by L\"u and Wang in~\cite{lu-wa}, quasitoric $SU$-manifolds can be
constructed by taking iterated complex projectivisations (which
are projective toric manifolds) and then amending the stably
complex structure so that the first Chern class becomes zero. The
underlying smooth manifold of the result is still toric, but the
stably complex structure is not the standard one. Examples of this
sort were known to Conner and Floyd~\cite{co-fl66m},
however the existence of a torus action was not emphasised and
their amended stably complex structures were actually not~$SU$.

Characteristic numbers of $SU$-manifolds satisfy intricate
divisibility conditions. Ochanine's theorem~\cite{ocha81}
asserting that the signature of an $(8k+4)$-dimensional $SU$-manifold
is divisible by~16 is one of the most famous examples. We
therefore find it quite miraculous that polynomial generators for
the $SU$-bordism ring $\OSU$ occur within the most basic families
of examples that one can produce using toric methods: 2-stage
complex projectivisations, and 3-stage projectivisations with the
first stage being just~$\C P^1$. The proof of Theorem~\ref{mainth}
involves calculating the characteristic numbers and checking
various divisibility conditions.
We use both classical and more recent results on binomial
coefficients modulo a prime.

We note also that the existence of large torus actions indicates
possible applications of our examples in the equivariant setting.
Applicability of toric methods in equivariant bordism is currently
being explored (see Buchstaber, Panov and Ray~\cite{b-p-r10}, Buchstaber and 
Panov~\cite[Ch.~9]{bu-pa} and L\"u~\cite{lu}).

\medskip

We thank Peter Landweber and the referee for their most helpful
comments.

\section{Toric and quasitoric manifolds, cohomology and Chern classes}
Here we collect the necessary information about toric varieties
and quasitoric manifolds; the details can be found
in~\cite{bu-pa}.

A \emph{toric variety} is a normal complex algebraic variety~$V$
containing an algebraic torus $(\C^\times)^n$ as a Zariski open
subset in such a way that the natural action of $(\C^\times)^n$ on
itself extends to an action on~$V$. We only consider nonsingular
complete (compact in the usual topology) toric varieties, also
known as \emph{toric manifolds}.

There is a bijective correspondence between the isomorphism
classes of complex $n$-dimensional toric manifolds and complete
regular fans in~$\R^n$. A \emph{fan} is a finite collection
$\Sigma=\{\sigma_1,\ldots,\sigma_s\}$ of strongly convex cones
$\sigma_i$ in $\R^n$ such that every face of a cone in $\Sigma$
belongs to $\Sigma$ and the intersection of any two cones in
$\Sigma$ is a face of each. A fan $\Sigma$ is \emph{regular} if
each of its cones $\sigma_j$ is generated by part of a basis of
the lattice $\Z^n\subset\R^n$ (we choose the standard lattice for
simplicity). In particular, each one-dimensional cone of $\Sigma$
is generated by a primitive vector $\mb a_i\in \Z^n$. A fan
$\Sigma$ is \emph{complete} if the union of its cones is the
whole~$\R^n$.

Projective toric varieties are particularly important. A
projective toric manifold $V$ is defined by a \emph{lattice
Delzant polytope}~$P$. Given a simple $n$-dimensional polytope $P$
with vertices in the lattice~$\Z^n$, one defines the \emph{normal
fan} $\Sigma_P$ as the fan whose $n$-dimensional cones $\sigma_v$
correspond to the vertices $v$ of $P$, and $\sigma_v$ is generated
by the primitive inside-pointing normals to the facets of $P$
meeting at~$v$. The polytope $P$ is \emph{Delzant} precisely when
its normal fan $\Sigma_P$ is regular. The fan $\Sigma_P$ defines a
projective toric manifold~$V_P$. Different lattice Delzant
polytopes with the same normal fan produce different projective
embeddings of the same toric manifold.

Irreducible torus-invariant divisors on~$V$ are the toric
subvarieties of complex codimension~1 corresponding to the
one-dimensional cones of~$\Sigma$. When $V$ is projective, they
also correspond to the facets of~$P$. We assume that there are $m$
one-dimensional cones (or facets), denote the corresponding
primitive vectors by $\mb a_1,\ldots,\mb a_m$, and denote the
corresponding codimension-1 subvarieties by~$V_1,\ldots,V_m$.

\enlargethispage{2\baselineskip}

\begin{theorem}\label{cohomtoric}
Let $V$ be a toric manifold of complex dimension~$n$, with the
corresponding complete regular fan~$\Sigma$. The cohomology ring
$H^*(V;\Z)$ is generated by the degree-two classes $v_i$ dual to
the invariant submanifolds $V_i$, and is given by
\[
  H^*(V;\Z)\cong \Z[v_1,\ldots,v_m]/\mathcal I,\qquad\deg v_i=2,
\]
where $\mathcal I$ is the ideal generated by elements of the
following two types:
\begin{itemize}
\item[(a)] $v_{i_1}\cdots v_{i_k}$ such that $\mb a_{i_1},\ldots,\mb
a_{i_k}$ do not span a cone of~$\Sigma$;
\item[(b)] $\displaystyle\sum_{i=1}^m\langle\mb a_i,\mb x\rangle v_i$, for
any vector $\mb x\in\Z^n$.
\end{itemize}
\end{theorem}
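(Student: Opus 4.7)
The plan is to proceed in three stages: first establish the additive structure of $H^*(V;\Z)$, then identify a generating set and verify the two stated families of relations, and finally show no further relations are needed by a rank comparison.

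For the additive structure I would use a Morse-theoretic / Bia{\l}ynicki-Birula argument. A generic one-parameter subgroup $\C^\times\subset(\C^\times)^n$ acts on $V$ with isolated fixed points, which are precisely the torus-fixed points; these are in bijection with the $n$-dimensional cones of $\Sigma$. The unstable manifolds of this action stratify $V$ into complex affine cells indexed by all cones $\sigma\in\Sigma$, with complex codimension equal to $\dim\sigma$. This yields a CW structure with only even-dimensional cells, so $H^*(V;\Z)$ is torsion-free and concentrated in even degrees, and the Betti number $b_{2k}$ equals the number of cones of codimension~$k$ (equivalently, the $k$th entry $h_k$ of the $h$-vector of the simplicial complex underlying $\Sigma$).

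Next I would verify that the classes $v_i$ generate and satisfy the stated relations. Relations of type~(a) follow from intersection theory: if $\mb a_{i_1},\ldots,\mb a_{i_k}$ do not span a cone of $\Sigma$, then the invariant submanifolds $V_{i_1},\ldots,V_{i_k}$ have empty common intersection, whence $v_{i_1}\cdots v_{i_k}=0$ after identifying $v_i$ with $c_1$ of the line bundle associated to the divisor~$V_i$. Relations of type~(b) express that for any character $\mb x\in\Z^n$ of the algebraic torus, the associated equivariant rational function has divisor $\sum_{i=1}^m\langle\mb a_i,\mb x\rangle V_i$, forcing this divisor to be principal and hence trivial in cohomology. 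That $v_1,\ldots,v_m$ generate the ring can be deduced from the equivariant picture: equivariantly, the localisation theorem identifies $H^*_T(V;\Q)$ with an explicit subring of $\bigoplus H^*_T(\text{pt})$ over the fixed points, and the ordinary cohomology is recovered by killing the $H^{>0}(BT)$-action, which is precisely what the linear relations~(b) encode.

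The principal obstacle is completing the argument by showing the natural surjection $\Z[v_1,\ldots,v_m]/\mathcal I \twoheadrightarrow H^*(V;\Z)$ is actually an isomorphism. I would do this by a Hilbert-series comparison. The ideal generated by the monomials of type~(a) is the Stanley--Reisner ideal of the simplicial complex $K_\Sigma$, so $\Z[v_1,\ldots,v_m]/\mathcal I_a$ is the face ring of $K_\Sigma$. Because $\Sigma$ is a complete regular fan, the $n$ linear forms in~(b) corresponding to a basis of $\Z^n$ form a \emph{regular sequence} on this face ring (this is where completeness and the Cohen--Macaulay property of the sphere $K_\Sigma$ enter), so the quotient $\Z[v_1,\ldots,v_m]/\mathcal I$ is a free $\Z$-module with Hilbert series equal to the $h$-polynomial of $K_\Sigma$. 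By the cell count of the first step this matches the Poincar\'e polynomial of $V$ term-by-term, so the surjection is an isomorphism in each degree. Establishing the regular-sequence property integrally is the technical heart of the proof and is what prevents one from getting away with a purely elementary argument.
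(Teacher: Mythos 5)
The paper does not give a proof of this theorem: Section~2 presents it as standard background (it is the Danilov--Jurkiewicz theorem) with a pointer to Buchstaber--Panov for details, so I assess your argument on its own. The overall architecture --- torsion-free evenly graded cohomology from a cell decomposition, identification of the relations, then a Hilbert-series comparison against the face ring modulo a linear regular sequence --- is exactly the standard route and is sound in outline.

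However, your first step contains a genuine error, not just imprecision. You say the unstable manifolds of a generic $\C^\times$-action give ``complex affine cells indexed by all cones $\sigma\in\Sigma$, with complex codimension equal to $\dim\sigma$,'' and then assert that the number of codimension-$k$ cones ``equivalently'' equals $h_k$. Both statements are false, and they are two faces of the same confusion. What is indexed by \emph{all} cones, with codimension equal to $\dim\sigma$, is the decomposition of $V$ into torus \emph{orbits} $O_\sigma\cong(\C^\times)^{n-\dim\sigma}$; these are not cells (for $\dim\sigma<n$ they are not contractible), and counting them by codimension reproduces the $f$-vector of your $K_\Sigma$, not the $h$-vector --- already for $\C P^2$ this recipe predicts $b_0=3$. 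The Bia{\l}ynicki--Birula cells are instead indexed by the torus-fixed points, that is, by the \emph{maximal} cones only, with the cell dimension at a fixed point governed by the sign pattern of the weights of the $\C^\times$-action on the tangent space there. That the resulting number of codimension-$k$ cells equals $h_k$ is a nontrivial combinatorial identity (essentially Stanley's interpretation of the $h$-vector via a shelling, induced here by a generic linear height function on the fan), and it is precisely the identity your step~3 requires. As written, step~1 delivers the wrong Betti numbers, and the Hilbert-series match in step~3 would then fail.

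A secondary gap: your surjectivity argument via equivariant localisation is inherently rational, while the theorem is integral, and an injective map of finitely generated free graded abelian groups of equal rank in each degree need not be onto. The clean fix is once more the BB decomposition: the closures of the cells are the invariant subvarieties $V_\sigma$, whose dual cohomology classes are the square-free monomials $v_{i_1}\cdots v_{i_k}$, so the $v_i$ generate over $\Z$ directly. Combined with the rank count from the regular-sequence computation on the face ring, this closes the argument without any detour through~$\Q$.
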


It is convenient to consider the integer $n\times m$-matrix
\begin{equation}\label{Lambdatoric}
  \varLambda=\begin{pmatrix}
  a_{11}&\cdots& a_{1m}\\
  \vdots&\ddots&\vdots\\
  a_{n1}&\cdots& a_{nm}
  \end{pmatrix}
\end{equation}
whose columns are the vectors $\mb a_i$ written in the standard
basis of~$\Z^n$. Then the ideal~(b) of Theorem~\ref{cohomtoric} is
generated by the $n$ linear forms $a_{j1}v_1+\cdots+a_{jm}v_m$
corresponding to the rows of~$\varLambda$.

\begin{theorem}\label{tangenttoric}
There is the following isomorphism of complex vector bundles:
\[
  \mathcal T V\oplus\underline{\C}^{m-n}\cong
  \rho_1\oplus\cdots\oplus\rho_m,
\]
where $\mathcal T V$ is the tangent bundle, $\underline{\C}^{m-n}$
is the trivial $(m-n)$-plane bundle, and $\rho_i$ is the line
bundle corresponding to~$V_i$, with $c_1(\rho_i)=v_i$. In
particular, the total Chern class of~$V$ is given by
\[
  c(V)=(1+v_1)\cdots(1+v_m).
\]
\end{theorem}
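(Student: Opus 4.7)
The plan is to realise $V$ as the quotient $\pi\colon U(\Sigma)\to V$ of an open subset $U(\Sigma)\subset\C^m$ by a free action of a torus $K\cong(\C^\times)^{m-n}$, and to decompose $\mathcal TU(\Sigma)$ into $K$-equivariant coordinate line bundles whose quotients are the $\rho_i$ on~$V$. Here $K$ is the kernel of the homomorphism $(\C^\times)^m\to(\C^\times)^n$ given by the matrix $\varLambda$ of~\eqref{Lambdatoric}, and $U(\Sigma)\subset\C^m$ is the complement of the union of those coordinate subspaces indexed by subsets that do not span a cone of~$\Sigma$. This is the Cox construction, whose topological reformulation (via the moment-angle manifold) is described in~\cite{bu-pa}.

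Since $\pi$ is a principal $K$-bundle, there is a short exact sequence of $K$-equivariant complex vector bundles
\[
  0\longrightarrow\underline{\mathfrak k}\longrightarrow\mathcal TU(\Sigma)\longrightarrow\pi^*\mathcal TV\longrightarrow 0
\]
on~$U(\Sigma)$, where $\underline{\mathfrak k}$ is the trivial rank-$(m-n)$ bundle whose fibres are spanned by the fundamental vector fields of the $K$-action. Being an open subset of $\C^m$, the manifold $U(\Sigma)$ has trivial tangent bundle $\mathcal TU(\Sigma)\cong U(\Sigma)\times\C^m$, which splits $K$-equivariantly as the sum of line sub-bundles $L_1,\ldots,L_m$ along the coordinate directions $\partial/\partial z_i$; the torus $K$ acts on $L_i$ by the same character as it does on the coordinate $z_i$. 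Consequently, $z_i$ itself is $K$-equivariant for that character and descends to a holomorphic section of the quotient line bundle $L_i/K$ on~$V$, whose zero locus is exactly~$V_i$. Therefore $L_i/K=\rho_i$, with $c_1(\rho_i)=v_i$.

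Taking $K$-quotients of the exact sequence above produces
\[
  0\longrightarrow\underline{\C}^{m-n}\longrightarrow\rho_1\oplus\cdots\oplus\rho_m\longrightarrow\mathcal TV\longrightarrow 0
\]
on~$V$. Any short exact sequence of complex vector bundles splits in the smooth category (for instance after choosing a Hermitian metric), giving the asserted isomorphism $\mathcal TV\oplus\underline{\C}^{m-n}\cong\rho_1\oplus\cdots\oplus\rho_m$. The Whitney product formula applied to this splitting, together with $c(\underline{\C}^{m-n})=1$ and $c(\rho_i)=1+v_i$, then yields $c(V)=\prod_{i=1}^m(1+v_i)$. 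The only step requiring genuine care is the identification of the descended coordinate line bundle $L_i/K$ with~$\rho_i=\mathcal O(V_i)$; once one matches the $K$-character on~$L_i$ with the one under which $z_i$ transforms, everything else is a formal consequence of the principal bundle structure and the splitting of smooth vector bundle extensions.
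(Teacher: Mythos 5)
Your argument is correct: the paper itself states Theorem~\ref{tangenttoric} as background without proof (referring to~\cite{bu-pa}), and your derivation via the Cox quotient $U(\Sigma)\to V$ by $K=\Ker\bigl((\C^\times)^m\to(\C^\times)^n\bigr)$ — splitting $\mathcal T U(\Sigma)$ into coordinate lines, identifying $L_i/K$ with $\rho_i=\mathcal O(V_i)$ via the equivariant section $z_i$, and smoothly splitting the resulting Euler-type sequence $0\to\underline{\C}^{m-n}\to\rho_1\oplus\cdots\oplus\rho_m\to\mathcal T V\to0$ — is exactly the standard argument given in that reference (presented there via the moment-angle manifold in the quasitoric setting). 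In particular your one flagged step checks out: $z_i(g\cdot z)=\chi_i(g)z_i(z)$ matches the $K$-character on $L_i$, so $z_i$ descends to a section of $L_i/K$ vanishing transversally on $V_i$, giving $c_1(L_i/K)=v_i$.
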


\begin{example}
A basic example of a toric manifold is the complex projective
space $\C P^n$. The cones of the corresponding fan are generated
by proper subsets of the set of $m=n+1$ vectors $\mb
e_1,\ldots,\mb e_n,-\mb e_1-\cdots-\mb e_n$, where $\mb
e_i\in\Z^n$ is the $i$th standard basis vector. It is the normal
fan of the lattice simplex $\varDelta^n$ with the vertices at $\bf
0$ and $\mb e_1,\ldots,\mb e_n$. The matrix~\eqref{Lambdatoric} is
given by
\[
  \begin{pmatrix}
  1&0&0&-1\\
  0&\ddots&0&\vdots\\
  0&0&1&-1
  \end{pmatrix}
\]

Theorem~\ref{cohomtoric} gives the cohomology of $\C P^n$ as
\[
  H^*(\C P^n)\cong\Z[v_1,\ldots,v_{n+1}]/(v_1\cdots v_{n+1},
  v_1-v_{n+1},\ldots,v_n-v_{n+1})\cong\Z[v]/(v^{n+1}),
\]
where $v$ is any of the $v_i$. Theorem~\ref{tangenttoric} gives
the standard decomposition
\[
  \mathcal T\C P^n\oplus\underline{\C}\cong
  \bar\eta\oplus\cdots\oplus\bar\eta\qquad\text{($n+1$ summands)},
\]
where $\eta=\mathcal O(-1)$ is the \emph{tautological} (Hopf) line
bundle over $\C P^n$, and $\bar\eta=\mathcal O(1)$ is its
conjugate, or the line bundle corresponding to a hyperplane $\C
P^{n-1}\subset\C P^n$.
\end{example}

\begin{example}\label{projex1}
An example which will be important for our constructions is the
complex projectivisation of a sum of line bundles over projective
space.

Given two positive integers $n_1$, $n_2$ and a sequence of
integers $(i_1,\ldots,i_{n_2})$, consider the projectivisation
$V=\C P(\eta^{\otimes i_1}\oplus\cdots\oplus\eta^{\otimes
i_{n_2}}\oplus\underline{\C})$, where $\eta^{\otimes i}$ denotes
the $i$th tensor power of $\eta$ over $\C P^{n_1}$ when $i\ge0$
and the $i$th tensor power of $\bar\eta$ otherwise. The manifold
$V$ is the total space of a bundle over $\C P^{n_1}$ with fibre
$\C P^{n_2}$. It is also a projective toric manifold with the
corresponding matrix~\eqref{Lambdatoric} given by

\[
  \begin{pmatrix}
  \noalign{\vspace{-1\normalbaselineskip}}
  \multicolumn{3}{c}{\scriptstyle n_1}\\[-5pt]
  \multicolumn{3}{c}{$\downbracefill$}\,\\
  1&0&0&-1                                           & & & &  \\
  0&\ddots&0&\vdots    &&&\textrm{\huge 0} &                  \\
  0&0&1&-1                                           & & & &  \\
   & & &i_1       &                                   1&0&0&-1\\
   &\textrm{\huge 0} & &\vdots &             0&\ddots&0&\vdots\\
   & & &i_{n_2}   &                                  0&0&1&-1 \\[-5pt]
   &&&&\multicolumn{3}{c}{$\upbracefill$\ }\\[-5pt]
   &&&&\multicolumn{3}{c}{\scriptstyle n_2}\\
  \noalign{\vspace{-1\normalbaselineskip}}
  \end{pmatrix}
  \vspace{1\normalbaselineskip}
\]
The polytope $P$ here is combinatorially equivalent to a product
$\varDelta^{n_1}\times\varDelta^{n_2}$ of two simplices.
Theorem~\ref{cohomtoric} gives the cohomology of $V$ as
\[
  H^*(V)\cong\Z[v_1,\ldots,v_{n_1+1},v_{n_1+2},\ldots,v_{n_1+n_2+2}]/\mathcal
  I,
\]
where $\mathcal I$ is generated by the elements
\begin{gather*}
  v_1\cdots v_{n_1+1},\;v_{n_1+2}\cdots
  v_{n_1+n_2+2},\;v_1-v_{n_1+1},\ldots,v_{n_1}-v_{n_1+1},\\
  i_1v_{n_1+1}+v_{n_1+2}-v_{n_1+n_2+2},\ldots,
  i_{n_2}v_{n_1+1}+v_{n_1+n_2+1}-v_{n_1+n_2+2}.
\end{gather*}
In other words,
\begin{equation}\label{cohomproj1}
  H^*(V)\cong\Z[u,v]\big/\bigl(u^{n_1+1},v(v-i_1u)\cdots(v-i_{n_2}u)\bigr),
\end{equation}
where $u=v_{1}=\cdots=v_{n_1+1}$ and $v=v_{n_1+n_2+2}$.
Theorem~\ref{tangenttoric} gives
\begin{equation}\label{cproj1}
  c(V)=(1+u)^{n_1+1}(1+v-i_1u)\cdots(1+v-i_{n_2}u)(1+v).
\end{equation}
If $i_1=\cdots=i_{n_2}=0$, we obtain $V=\C P^{n_1}\times\C
P^{n_2}$.

The same information can be retrieved from the following
well-known description of the tangent bundle and the cohomology
ring of a complex projectivisation.

\begin{theorem}[Borel and Hirzebruch {\cite[\S15]{bo-hi58}}]\label{projd}
Let $p\colon \C P(\xi)\to X$ be the projectivisation of a complex
$n$-plane bundle $\xi$ over a complex manifold~$X$, and let
$\gamma$ be the tautological line bundle over~$\C P(\xi)$. Then
there is an isomorphism of vector bundles
\[
  \mathcal T\C P(\xi)\oplus\underline{\C}\cong
   p^*{\mathcal T}\!X\oplus(\bar\gamma\otimes p^*\xi),
\]
where $\underline\C$ denotes a trivial line bundle over~$\C
P(\xi)$. Furthermore, the integral cohomology ring of $\C P(\xi)$
is the quotient of the polynomial ring $H^*(X)[v]$ on one
generator $v=c_1(\bar\gamma)$ with coefficients in $H^*(X)$ by the
single relation
\begin{equation}\label{cntatf}
  v^n+c_1(\xi)v^{n-1}+\cdots+c_n(\xi)=0.
\end{equation}
\end{theorem}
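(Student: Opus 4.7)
The plan is to handle the two claims---the tangent bundle formula and the cohomology ring presentation---separately, using standard techniques for projective bundles.

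For the tangent bundle formula, I would start from the fact that $p\colon \C P(\xi)\to X$ is a smooth fibre bundle with fibre $\C P^{n-1}$, so a choice of Hermitian metric on $\xi$ induces a splitting $\mathcal T\C P(\xi)\cong \mathcal T^{\mathrm{vert}}\oplus p^*\mathcal T X$ of complex vector bundles. The vertical tangent bundle $\mathcal T^{\mathrm{vert}}$ admits a canonical fibrewise description: at a point $\ell\in\C P(\xi_x)$ representing a line $\ell\subset\xi_x$, the tangent space to the fibre is naturally $\Hom(\ell,\xi_x/\ell)$. This globalises to $\mathcal T^{\mathrm{vert}}\cong\Hom(\gamma,p^*\xi/\gamma)\cong\bar\gamma\otimes(p^*\xi/\gamma)$, where $\gamma\subset p^*\xi$ is the tautological subbundle. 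The trivial line bundle is canonically $\underline{\C}\cong\Hom(\gamma,\gamma)\cong\bar\gamma\otimes\gamma$, so using the smooth splitting of $0\to\gamma\to p^*\xi\to p^*\xi/\gamma\to 0$ afforded by the metric, one obtains $\mathcal T^{\mathrm{vert}}\oplus\underline{\C}\cong\bar\gamma\otimes p^*\xi$. Combining with the horizontal/vertical splitting yields the stated isomorphism.

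For the cohomology presentation, my plan is to combine the Leray--Hirsch theorem with Grothendieck's construction of Chern classes. Since $\bar\gamma$ restricts on each fibre to the hyperplane bundle $\mathcal O(1)$ over $\C P^{n-1}$, the class $v=c_1(\bar\gamma)$ restricts to the standard generator of $H^2$ of each fibre, so $1,v,\ldots,v^{n-1}$ restrict to a basis of the cohomology of each fibre. Leray--Hirsch then gives that $H^*(\C P(\xi))$ is a free $H^*(X)$-module on these $n$ classes. To extract the polynomial relation, I would apply the Whitney sum formula to the tautological sequence $0\to\gamma\to p^*\xi\to Q\to 0$ on $\C P(\xi)$, where $Q$ has complex rank $n-1$. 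Setting $u=c_1(\gamma)=-v$ and exploiting $c_n(Q)=0$ in the identity $c(\gamma)\,c(Q)=p^*c(\xi)$, a direct expansion gives $\sum_{i=0}^n v^{n-i}\,p^*c_i(\xi)=0$, which is the claimed relation. The quotient $H^*(X)[v]/(v^n+c_1(\xi)v^{n-1}+\cdots+c_n(\xi))$ is itself a free $H^*(X)$-module of rank $n$ on $1,v,\ldots,v^{n-1}$, so the natural surjection onto $H^*(\C P(\xi))$ sending $v\mapsto c_1(\bar\gamma)$ is an isomorphism by comparison of ranks.

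I expect the main technical point requiring care to be the bookkeeping of signs when passing between the tautological bundle $\gamma$ and its dual $\bar\gamma$ in the derivation of the Chern-class relation; mixing up the two leads to the relation $\sum(-v)^{n-i}c_i(\xi)=0$, which only agrees with the stated one up to replacing $v$ by $-v$. Once the sign convention is fixed, the rest of the argument is essentially mechanical.
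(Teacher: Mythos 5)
Your proposal is correct and is essentially the argument the paper relies on: the theorem is quoted from Borel--Hirzebruch without proof, the only hint given being the remark that relation \eqref{cntatf} is just $c_n(\bar\gamma\otimes p^*\xi)=0$, and your derivation via $c_n(p^*\xi/\gamma)=0$ in the tautological sequence is the same Grothendieck relation (note $\bar\gamma\otimes p^*\xi\cong\underline{\C}\oplus(\bar\gamma\otimes(p^*\xi/\gamma))$, so the two vanishing statements coincide). Your sign bookkeeping also matches the paper's conventions, as one can check against Example~\ref{projex1}, where \eqref{cntatf} specialises correctly to $v(v-i_1u)\cdots(v-i_{n_2}u)=0$.
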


The relation above is just $c_n(\bar\gamma\otimes p^*\xi)=0$.

In the case considered above, $\xi=\eta^{\otimes
i_1}\oplus\cdots\oplus\eta^{\otimes i_{n_2}}\oplus\underline{\C}$
over $X=\C P^{n_1}$. We then have $H^*(X)=\Z[u]/(u^{n_1+1})$ where
$u=c_1(\bar\eta)$, so that~\eqref{cntatf} becomes
$v(v-i_1u)\cdots(v-i_{n_2}u)=0$ and the ring $H^*(\C P(\xi))$
given by Theorem~\ref{projd} is precisely~\eqref{cohomproj1}.
Moreover, the total Chern class of $p^*{\mathcal
T}\!X\oplus(\bar\gamma\otimes p^*\xi)$ is given by~\eqref{cproj1}.
\end{example}

The quotient of the projective toric manifold $V_P$ by the action
of the compact torus $T^n\subset(\C^\times)^n$ is the
polytope~$P$.

A \emph{quasitoric manifold} over a combinatorial simple
$n$-dimensional polytope $P$ is a manifold $M$ of dimension $2n$
with a locally standard action of $T^n$ such that the quotient
$M/T^n$ is homeomorphic, as a manifold with corners, to~$P$. (An
action of $T^n$ on $M^{2n}$ is \emph{locally standard} if every
point $x\in M^{2n}$ is contained in a $T^n$-invariant
neighbourhood equivariantly homeomorphic to an open subset in
$\C^n$ with the standard coordinatewise action of~$T^n$ twisted by
an automorphism of the torus; the orbit space of a locally
standard action is a manifold with corners.) We therefore have a
projection $\pi\colon M\to P$ whose fibres are orbits of the
$T^n$-action.

Not every simple polytope can be the quotient of a quasitoric
manifold. Nevertheless, quasitoric manifolds constitute a much
larger family than projective toric manifolds, and enjoy more
flexibility for topological applications.

If $F_1,\ldots,F_m$ are facets of $P$, then each
$M_i=\pi^{-1}(F_i)$ is a quasitoric submanifold of $M$ of
codimension~2, called a \emph{characteristic submanifold}. The
characteristic submanifolds $M_i\subset M$ are analogues of the
invariant divisors $V_i$ on a toric manifold~$V$. Each $M_i$ is
fixed pointwise by a closed $1$-dimensional subgroup (a subcircle)
$T_i\subset T^n$ and therefore corresponds to a primitive vector
$\lambda_i\in\Z^n$ defined up to a sign. Choosing a direction of
$\lambda_i$ is equivalent to choosing an orientation for the
normal bundle $\nu(M_i\subset M)$ or, equivalently, choosing an
orientation for $M_i$, provided that $M$ itself is oriented. An
\emph{omniorientation} of a quasitoric manifold $M$ consists of a
choice of orientation for $M$ and each characteristic submanifold
$M_{i}$, $1\le i\le m$.

The vectors $\lambda_i$ are analogues of the generators $\mb a_i$
of the one-dimensional cones of the fan corresponding to a toric
manifold~$V$ (or analogues of the normal vectors to the facets of
$P$ when $V$ is projective). However, the $\lambda_i$ need not be
the normal vectors to the facets of $P$ in general.

There is an analogue of Theorem~\ref{cohomtoric} for quasitoric
manifolds:

\begin{theorem}\label{cohomqtoric}
Let $M$ be an omnioriented quasitoric manifold of dimension~$2n$
over a polytope~$P$. The cohomology ring $H^*(M;\Z)$ is generated
by the degree-two classes $v_i$ dual to the oriented
characteristic submanifolds $M_i$, and is given by
\[
  H^*(M;\Z)\cong \Z[v_1,\ldots,v_m]/\mathcal I,\qquad\deg v_i=2,
\]
where $\mathcal I$ is the ideal generated by elements of the
following two types:
\begin{itemize}
\item[(a)] $v_{i_1}\cdots v_{i_k}$ such that $F_{i_1}\cap\cdots\cap F_{i_k}=\varnothing$ in~$P$;
\item[(b)] $\displaystyle\sum_{i=1}^m\langle\lambda_i,\mb x\rangle v_i$, for
any vector $\mb x\in\Z^n$.
\end{itemize}
\end{theorem}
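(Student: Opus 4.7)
The plan is to derive the presentation of $H^*(M;\Z)$ from its $T^n$-equivariant counterpart via the Borel fibration, following the approach of Davis and Januszkiewicz. The first step is to establish that $M$ admits a CW decomposition with only even-dimensional cells. I would do this by choosing a generic linear functional on the polytope~$P$, lifting it to a $T^n$-invariant Morse function on~$M$, and verifying that its critical points are exactly the $T^n$-fixed points (one per vertex of~$P$), each of even Morse index. Consequently $H^*(M;\Z)$ is a free abelian group concentrated in even degrees, the Serre spectral sequence of the Borel fibration
\[
  M \hookrightarrow M_{T^n}=ET^n\times_{T^n}M \xrightarrow{\;p\;} BT^n
\]
collapses at $E_2$, the edge map $p^*\colon H^*(BT^n;\Z)\to H^*(M_{T^n};\Z)$ is injective, and
\[
  H^*(M;\Z)\cong H^*(M_{T^n};\Z)\big/p^*\bigl(\widetilde H^{>0}(BT^n;\Z)\bigr).
\]

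Next I would identify the equivariant ring. Writing $M=\mathcal Z_P/K$, where $\mathcal Z_P$ is the moment-angle manifold over~$P$ with its canonical $T^m$-action and $K\cong T^{m-n}$ is the kernel of the characteristic map $\lambda\colon T^m\to T^n$ assembled from the vectors $\lambda_1,\ldots,\lambda_m$, one obtains a natural homotopy equivalence $M_{T^n}\simeq ET^m\times_{T^m}\mathcal Z_P=DJ(P)$ with the Davis--Januszkiewicz space of~$P$. A cover-by-faces argument (or a direct comparison with the polyhedral product $(BT^1)^P$) yields
\[
  H^*(DJ(P);\Z)\cong\Z[v_1,\ldots,v_m]\big/\mathcal I_{SR},
\]
where $\mathcal I_{SR}$ is generated by the monomials of type~(a) and $v_i$ is the pullback of the standard generator of $H^2(BT^1)$ from the $i$th factor. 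Under this identification $p^*$ becomes the composition with $H^*(B\lambda)$, and tracking the standard generators $t_1,\ldots,t_n$ of $H^*(BT^n;\Z)$ through the characteristic map gives
\[
  p^*(t_j)=\sum_{i=1}^m\langle\lambda_i,\mb e_j\rangle v_i,
\]
so quotienting by these elements produces precisely the linear relations of type~(b).

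The main technical obstacle is the construction of the even-cell decomposition of~$M$. This relies on a careful choice of lift of the linear functional on~$P$ so that the resulting flow respects the combinatorial face structure and the number of critical points of index~$2k$ equals the number of vertices of~$P$ with exactly $k$ descending edges. Once the even cells are in place, collapse of the Serre spectral sequence and the identification of $M_{T^n}$ with $DJ(P)$ are formal, and the omniorientation enters only at the end to fix the signs of the classes $v_i$ and of the vectors $\lambda_i$ so that the linear relations of type~(b) appear as stated.
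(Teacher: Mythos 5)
Your proposal is correct and follows essentially the same route as the proof the paper relies on (it cites Davis--Januszkiewicz and the book of Buchstaber--Panov rather than proving the theorem itself): even-dimensional cell structure from a generic linear functional on $P$, collapse of the Serre spectral sequence of the Borel fibration, identification of $M_{T^n}$ with the Davis--Januszkiewicz space giving the Stanley--Reisner ring, and then quotienting by the linear forms pulled back from $H^*(BT^n)$. Note only that condition (a) as printed contains a typo --- the monomial relations correspond to collections of facets with $F_{i_1}\cap\cdots\cap F_{i_k}=\varnothing$ --- and your Stanley--Reisner identification correctly uses this version.
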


By analogy with \eqref{Lambdatoric}, we consider the integer
$n\times m$-matrix
\begin{equation}\label{Lambdaqtoric}
  \varLambda=\begin{pmatrix}
  \lambda_{11}&\cdots& \lambda_{1m}\\
  \vdots&\ddots&\vdots\\
  \lambda_{n1}&\cdots& \lambda_{nm}
  \end{pmatrix}
\end{equation}
whose columns are the vectors $\lambda_i$ written in the standard
basis of~$\Z^n$. Changing a basis in the lattice results in
multiplying $\varLambda$ from the left by a matrix from
$\mbox{\textit{GL}}\,(n,\Z)$. The ideal~(b) of
Theorem~\ref{cohomqtoric} is generated by the $n$ linear forms
$\lambda_{j1}v_1+\cdots+\lambda_{jm}v_m$ corresponding to the rows
of~$\varLambda$. Also, $\varLambda$ has the property that
$\det(\lambda_{i_1},\ldots,\lambda_{i_n})=\pm1$ whenever the
facets $F_{i_1},\ldots,F_{i_n}$ intersect at a vertex of~$P$.

There is also an analogue of Theorem~\ref{tangenttoric}:

\begin{theorem}\label{tangentqtoric}
For a quasitoric manifold $M$ of dimension $2n$, there is an
isomorphism of real vector bundles:
\begin{equation}\label{TMqtiso}
  \mathcal T M\oplus\underline{\R}^{2(m-n)}\cong
  \rho_1\oplus\cdots\oplus\rho_m,
\end{equation}
where $\rho_i$ is the real $2$-plane bundle corresponding to the
orientable characteristic submanifold $M_i\subset M$, so that
$\rho_i|_{M_i}=\nu(M_i\subset M)$.
\end{theorem}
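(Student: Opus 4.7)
The plan is to realize $M$ as the quotient of a moment-angle manifold by a freely acting subtorus, and to derive the tangent-bundle isomorphism on $M$ from the canonical splitting of $\mathcal T\C^m$ on an ambient $\C^m$, descending everything along a principal torus bundle.

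First I would set up the moment-angle construction: from $P$ and the matrix $\varLambda$ of~\eqref{Lambdaqtoric}, one obtains a smooth closed submanifold $\mathcal Z_P \subset \C^m$ of real dimension $m+n$, invariant under the coordinatewise $T^m$-action and cut out by a system of $m-n$ real quadratic equations. The columns of $\varLambda$ define a surjection $T^m \to T^n$ whose kernel $K \subset T^m$ is a subtorus of real dimension $m-n$; it acts freely on $\mathcal Z_P$ with quotient $\mathcal Z_P/K \cong M$, realising $\pi\colon \mathcal Z_P \to M$ as a principal $K$-bundle. I would then identify $\rho_i$ with the associated real $2$-plane bundle $\mathcal Z_P \times_K \C_i$, where $\C_i$ carries the restriction to $K$ of the $i$-th coordinate character of $T^m$; the identification $\rho_i|_{M_i} \cong \nu(M_i \subset M)$ is immediate from the standard local chart around a vertex of~$P$, where exactly the $i$-th coordinate direction in $\C^n$ is normal to the image of $M_i$.

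Second I would use the tautological splitting $\mathcal T\C^m \cong L_1 \oplus \cdots \oplus L_m$, where $L_i$ is the trivial complex line bundle equipped with the $T^m$-equivariant structure given by the $i$-th coordinate character. Restricting to $\mathcal Z_P$ and combining with two elementary trivializations produces the key stable identity on $\mathcal Z_P$: first, the normal bundle $\nu(\mathcal Z_P \subset \C^m)$ is $T^m$-equivariantly trivial of real rank $m-n$, with a frame given by the gradients of the defining quadrics; second, the vertical bundle of $\pi$ is $K$-equivariantly trivial of real rank $m-n$ via the infinitesimal $K$-action. Together these yield a $K$-equivariant isomorphism
\[
  \pi^*\mathcal T M \oplus \underline{\R}^{2(m-n)} \cong \bigoplus_{i=1}^m L_i\bigr|_{\mathcal Z_P}
\]
on $\mathcal Z_P$, which descends along $\pi$ to the claimed isomorphism~\eqref{TMqtiso} on~$M$.

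The main obstacle is the compatibility of equivariant structures across the descent step: one must verify that the trivializations of the normal and vertical bundles can be chosen $K$-invariantly, and that the $K$-equivariant structure on $L_i|_{\mathcal Z_P}$ matches the one defining $\rho_i$ through the associated-bundle construction. Both points follow by construction, since the defining equations of $\mathcal Z_P$, the $K$-action, and the $T^m$-action on $\C^m$ are all built from the same combinatorial data $(P,\varLambda)$; the invariance is automatic, and the remaining work is bookkeeping with characters.
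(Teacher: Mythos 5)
The paper itself gives no proof of this statement---it is quoted as background, with details referred to~\cite{bu-pa}---and your argument is exactly the standard proof found there: identify $M$ with the quotient $\mathcal Z_P/K$ of the moment-angle manifold by the freely acting $(m-n)$-torus $K=\Ker(T^m\to T^n)$, restrict the coordinatewise splitting of $\mathcal T\C^m$ to $\mathcal Z_P$, cancel the trivial normal bundle of $\mathcal Z_P\subset\C^m$ and the trivial vertical bundle of the principal $K$-bundle, and descend, with $\rho_i=\mathcal Z_P\times_K\C_i$. The one point you should state explicitly rather than assume is that a quasitoric manifold is given a priori only as a smooth manifold with a locally standard $T^n$-action over~$P$, so the smooth $T^n$-equivariant identification of $M$ with the canonical model $\mathcal Z_P/K$ (not merely the equivariant homeomorphism of Davis--Januszkiewicz) must be invoked before tangent bundles can be compared; this is established in~\cite{b-p-r07}, and with it quoted your descent argument is complete and correct.
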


\section{Unitary bordism}
Here we provide a new set of toric generators for the unitary
bordism ring. The general information about unitary (or complex)
bordism can be found in~\cite{ston68}.

Elements of the unitary bordism ring $\varOmega^U$ are the complex
bordism classes of stably complex manifolds. A \emph{stably
complex manifold} is a pair $(M,c_{\mathcal T})$ consisting of a
smooth manifold $M$ and a \emph{stably complex structure}
$c_{\mathcal T}$, where the latter is determined by a choice of an
isomorphism
\begin{equation}\label{stabcom}
  c_{\mathcal T}\colon\mathcal T M\oplus\underline{\R}^N
  \stackrel\cong\longrightarrow\xi
\end{equation}
between the stable tangent bundle of $M$ and a complex vector
bundle~$\xi$. We omit $c_{\mathcal T}$ in the notation when it is
clear from the context. We denote by $[M]\in\varOmega^U$ the
bordism class of a stably complex manifold~$M$. The sum in
$\varOmega^U$ is the disjoint union, and the product is induced by
the Cartesian product of manifolds. The ring $\varOmega^U$ is
graded by the dimension of manifolds.

A complex manifold $M$ (in particular, a toric manifold) has a
canonical stably complex structure arising from the complex
structure on~$\mathcal T M$. An omniorientation of a quasitoric
manifold $M$ gives it a stably complex structure by means of the
isomorphism of Theorem~\ref{tangentqtoric}, because a choice of
orientation for each real 2-plane bundle $\rho_i$ is equivalent to
endowing it with a complex structure.

\begin{example}
The canonical stably complex structure on $\C P^n$ (as a complex
manifold) is given by the isomorphism
\[
  \mathcal T\C P^n\oplus\underline{\R}^2\cong
  \bar\eta\oplus\cdots\oplus\bar\eta\qquad\text{($n+1$ summands)}.
\]
On the other hand, $\C P^n$, viewed as a quasitoric manifold
over~$\varDelta^n$, has $n+1$ characteristic submanifolds, and
therefore $2^{n+2}$ different omniorientations. Each of these
omniorientations gives rise to a stably complex structure,
obtained by replacing some of the line bundles $\bar\eta$ above
with $\eta$, or by reversing the global orientation. Some of these
stably complex structures are equivalent, of course.
\end{example}

We have $H^*(BU(n))\cong\Z[c_1,\ldots,c_n]$, $\deg c_i=2i$, where
the $c_i$ are the universal Chern characteristic classes. For any
sequence $\omega=(i_1,\ldots,i_n)$ of nonnegative integers, there
is the monomial $c_\omega=c_1^{i_1}\cdots c_n^{i_n}$ of degree
$2\|\omega\|=2\sum_{k=1}^n k\,i_k$ and the corresponding
characteristic class $c_\omega(\xi)$ of a complex $n$-plane
bundle~$\xi$. The corresponding tangential Chern
\emph{characteristic number} of a stably complex manifold $M$ is
defined by $c_\omega[M]=c_\omega(\mathcal T M)\langle M\rangle$.
Here $\langle M\rangle$ is the fundamental homology class of~$M$,
and $\mathcal T M$ is regarded as a complex bundle via the
isomorphism~\eqref{stabcom}. The number $c_\omega[M]$ is assumed
to be zero when $2\|\omega\|\ne\dim M$.

\begin{theorem}
Two stably complex manifold $M$ and $N$ represent the same bordism
classes in~$\varOmega^U$ if and only if their sets of Chern
characteristic numbers coincide.
\end{theorem}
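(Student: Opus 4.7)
The plan is to prove both implications separately, with the forward direction being a routine boundary argument and the converse relying on a structure theorem for $\OU$. For necessity, suppose $[M]=[N]$ in $\OU$, realised by a stably complex bordism $W^{2n+1}$ with $\partial W=M\sqcup(-N)$ whose stably complex structure restricts to those of $M$ and of $N$ with reversed orientation. Then $c_\omega(\mathcal T W)$ restricts to $c_\omega(\mathcal T M)$ and $c_\omega(\mathcal T N)$ on the two boundary components, and any cohomology class lifted from $H^*(W,\partial W;\Z)$ pairs trivially with $\langle\partial W\rangle$; this yields $c_\omega[M]-c_\omega[N]=0$ for every~$\omega$.

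For sufficiency I would invoke the Pontryagin--Thom isomorphism $\OU_n\cong\pi_n(MU)$, where $MU$ is the Thom spectrum of the universal bundle over~$BU$. Under this identification, evaluation of a Chern number $c_\omega[M]$ is the composition of the Hurewicz homomorphism $h\colon\pi_n(MU)\to H_n(MU;\Z)$ with pairing against the Thom-shifted class $c_\omega\in H^n(MU;\Z)$. Since these monomial classes additively generate $H^*(MU;\Z)$ (via the Thom isomorphism $H^*(MU)\cong H^*(BU)$ together with the torsion-free description $H^*(BU;\Z)=\Z[c_1,c_2,\ldots]$), the condition that all Chern numbers of $[M]-[N]$ vanish is equivalent to $h([M])=h([N])$. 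Thus the sufficiency statement reduces to the injectivity of~$h$.

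The critical input, and the main obstacle if one wished to prove the theorem from first principles, is the \emph{Milnor--Novikov structure theorem}: $\pi_*(MU)$ is torsion-free and polynomial over $\Z$ on one generator in each positive even dimension. Combined with the classical computation $H_*(MU;\Z)\cong\Z[b_1,b_2,\ldots]$ obtained by applying the Thom isomorphism to $H_*(BU;\Z)$, one sees that $h\otimes\Q$ is a rational isomorphism of polynomial rings (Thom's theorem), and torsion-freeness of both source and target upgrades this to integral injectivity of~$h$. The deeper content lies in the mod-$p$ analysis of $H^*(MU;\Z/p)$ as a module over the Steenrod algebra, effected via the Adams spectral sequence; for our purposes it suffices to cite~\cite{ston68}.
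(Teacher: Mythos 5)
The paper states this theorem without proof, treating it as a standard fact from the literature (it is the classical result of Milnor and Novikov, cf.\ Stong~\cite{ston68}); there is therefore no argument in the paper against which to compare your approach. Your reconstruction is essentially the standard one and is correct in outline. A few remarks.

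For necessity, there is a slip in the phrasing: the relevant fact is that classes in the image of $i^*\colon H^{2n}(W)\to H^{2n}(\partial W)$ pair to zero against $\langle\partial W\rangle=\partial_*[W,\partial W]$, not classes ``lifted from $H^*(W,\partial W)$'' (for the latter, $i^*j^*=0$ makes the statement vacuous). The correct justification is that $\partial_*$ and the connecting map $\delta\colon H^{2n}(\partial W)\to H^{2n+1}(W,\partial W)$ are adjoint under the Kronecker pairing, and $\delta\circ i^*=0$ by exactness of the pair sequence; since $c_\omega(\mathcal T W)\in H^{2n}(W)$ restricts to $c_\omega(\mathcal T M\oplus\underline{\R})=c_\omega(\mathcal T M)$ on the boundary, all Chern numbers of $\partial W=M\sqcup(-N)$ vanish.

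For sufficiency your chain of reductions is sound: Chern numbers of $[M]$ are precisely the pairings of the Hurewicz image $h([M])\in H_{2n}(MU;\Z)$ against the classes $c_\omega\in H^{2n}(MU;\Z)$ (up to the standard dualisation between tangential and normal Chern classes, which is invertible and preserves the ``all vanish'' condition); the $c_\omega$ additively span $H^*(MU;\Z)$, which is free of finite type, so the pairing is perfect and vanishing of all Chern numbers forces $h([M]-[N])=0$; and $h$ is injective because $\pi_*(MU)$ is torsion-free (Milnor--Novikov) while $h\otimes\Q$ is an isomorphism. You are right that the torsion-freeness of $\pi_*(MU)$ is where the real work lies and cannot be avoided in this approach.
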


Another important characteristic class is $s_n$. It is defined as
the polynomial in $c_1,\ldots,c_n$ obtained by expressing the
symmetric polynomial $x_1^n+\cdots+x_n^n$ via the elementary
symmetric functions $\sigma_i(x_1,\ldots,x_n)$ and then replacing
each $\sigma_i$ by~$c_i$. Define the corresponding characteristic
number as $s_n[M]=s_n(\mathcal T M)\langle M\rangle$.

The ring $\varOmega^U$ was described by Milnor and Novikov (see
\cite{novi62} and Stong~\cite{ston68}):

\begin{theorem}\label{novikov}
The ring $\varOmega^{U}$ is a polynomial ring on generators in every even
degree:
\[
  \varOmega^U\cong\Z[a_i, i>0],\quad\deg a_i=2i.
\]
The bordism class of a stably complex manifold $M^{2i}$ may be
taken to be the $2i$-dimensional generator $a_i$ if and only if
\[
  s_i[M^{2i}]=\begin{cases}
    \pm1&\text{if}\quad i+1\ne p^s\quad\text{for any prime }p,\\
    \pm p&\text{if}\quad i+1=p^s\quad\text{for some prime $p$ and integer~$s>0$.}
  \end{cases}
\]
\end{theorem}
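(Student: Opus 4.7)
The plan is to reduce Theorem~\ref{novikov} to the computation of $\pi_*(MU)$ via Thom--Pontryagin, and then to interpret the $s_n$-criterion as detecting indecomposables modulo decomposables in $\varOmega^U_*$. I would organise the argument in three stages: (i) torsion-freeness and concentration in even degrees, (ii) the rational polynomial structure, and (iii) the integral criterion on $s_n[M^{2n}]$.

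For (i), I would identify $\varOmega^U_*$ with $\pi_*(MU)$, and use the Thom isomorphism $H^*(MU;\Z)\cong H^*(BU;\Z)=\Z[c_1,c_2,\ldots]$, which is concentrated in even degrees and torsion-free. The Atiyah--Hirzebruch spectral sequence then collapses (e.g.\ by comparison with $H\Z$-homology after inverting nothing, using complex orientability) and gives that $\varOmega^U_{2n}$ is free abelian of rank $p(n)$, the number of partitions of $n$. For (ii), the characteristic numbers $\{c_\omega[M]\}$ form a complete bordism invariant, and one checks that $[\C P^n]$ has $s_n[\C P^n]=n+1\ne0$ (from the decomposition $\mathcal T\C P^n\oplus\underline\C\cong(n+1)\bar\eta$ and the definition of $s_n$ as the power sum of Chern roots). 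A rank count against $p(n)$ then shows that $\varOmega^U_*\otimes\Q=\Q[\C P^1,\C P^2,\ldots]$.

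For (iii), the key observation is that $s_n$ is a \emph{primitive} characteristic class: if $M$ and $N$ have positive dimensions, then under the Whitney sum formula the Chern roots of $\mathcal T(M\times N)$ split as roots of $\mathcal TM$ plus roots of $\mathcal TN$, and the identity $\sum x_i^n+\sum y_j^n=s_n$ on the product shows $s_n[M\times N]=0$. Hence $s_n$ descends to a group homomorphism
\[
  s_n\colon\varOmega^U_{2n}\big/\text{Dec}\longrightarrow\Z,
\]
where $\text{Dec}$ denotes the subgroup of decomposable classes. Combined with (i) and (ii), a class $[M^{2n}]$ may serve as a polynomial generator $a_n$ if and only if $s_n[M^{2n}]$ generates the image of this map. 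It then remains to identify this image as $\Z$ when $n+1$ is not a prime power and as $p\,\Z$ when $n+1=p^s$.

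The main obstacle is exactly this image computation. The realisability half is explicit: I would combine $s_n[\C P^n]=n+1$ with the computation $s_{n}[H(i,n+1-i)]=-\binom{n+1}{i}$ for the Milnor hypersurfaces, and invoke Kummer's theorem, which gives
\[
  \gcd\bigl\{\tbinom{n+1}{i}:1\le i\le n\bigr\}=
  \begin{cases}p&\text{if }n+1=p^s,\\ 1&\text{otherwise,}\end{cases}
\]
so that integer combinations of these characteristic numbers realise every value in the claimed subgroup. The harder, non-trivial half is the upper bound: showing that no $2n$-dimensional stably complex manifold has $s_n[M]$ outside this subgroup when $n+1=p^s$. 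This is the content of Milnor's $p$-divisibility theorem and follows from analysing $\pi_*(MU)_{(p)}$ via the Adams spectral sequence, where the $\mathcal A_p^*$-module structure of $H^*(MU;\F_p)$ forces the $p$-adic filtration on generators. Accepting this deep input, the three stages together yield the polynomial structure and the precise criterion stated in the theorem.
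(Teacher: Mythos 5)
The paper itself gives no proof of this statement: it is the classical Milnor--Novikov computation, quoted with references to \cite{novi62} and \cite{ston68}, so your sketch has to be measured against the standard argument. The parts of your outline that are genuinely elementary are fine and match how the paper later uses the theorem: $s_n$ vanishes on products of positive-dimensional manifolds for dimension reasons, so it descends to the indecomposable quotient; $s_n[\C P^n]=n+1$ and $s_n[H(n_1,n_2)]=-\bin{n+1}{n_1}$ as in \eqref{snmilnor}; and the gcd of the binomial coefficients $\bin{n+1}{i}$, $0<i<n+1$, is $1$ or $p$ exactly as in Proposition~\ref{gcdbinom}. This gives the realisability half of the criterion.

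The genuine gap is in your stage (i), and it propagates into stage (iii). The spectral sequence you invoke --- with $E_2$-term $H_*(MU;\pi_*^S)$ converging to $\pi_*(MU)$ --- does not collapse: its $E_2$-term contains a great deal of torsion coming from the stable stems, and the assertion that none of it survives is essentially equivalent to the theorem being proved; neither complex orientability nor ``comparison with $H\Z$'' yields such a collapse. Evenness, torsion-freeness and the rank count $p(n)$ are precisely the deep content of Milnor's and Novikov's work, obtained from the Adams spectral sequence via the $\mathcal A_p$-module structure of $H^*(MU;\mathbb F_p)$ --- the very same input you defer to only for the $p$-divisibility of $s_n$; you cannot get the additive structure more cheaply and then cite the Adams spectral sequence only at the end. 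Moreover, your inference ``combined with (i) and (ii), $[M^{2n}]$ may serve as a generator iff $s_n[M^{2n}]$ generates the image'' presupposes that $\varOmega^U$ is \emph{integrally} a polynomial ring with one generator in each even degree. Torsion-freeness plus rational polynomiality plus knowledge of $\mathop{\mathrm{im}} s_n$ does not imply this: there are torsion-free, evenly graded rings of the correct ranks that are rationally polynomial but not integrally so. The standard proof closes this step by embedding $\pi_*(MU)$ into $H_*(MU;\Z)\cong\Z[b_1,b_2,\ldots]$ via the (injective) Hurewicz map and analysing its image degree by degree, which is again part of the cited Milnor--Novikov analysis rather than a formal consequence of your stages (i)--(iii). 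So the route is the right classical one, but both the additive and the multiplicative integral structure must be attributed to the hard homotopy-theoretic computation, not to the arguments you give for them.
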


There is no universal description of connected manifolds
representing the polynomial generators $a_n\in\varOmega^U$.
However, there are known explicit families of manifolds whose
bordism classes generate the whole ring $\varOmega^U$.

The classical family of generators for the ring $\varOmega^U$ consists of
the \emph{Milnor hypersurfaces} $H(n_1,n_2)$. Each $H(n_1,n_2)$ is
a hyperplane section of the Segre embedding $\C P^{n_1}\times \C
P^{n_2}\to\C P^{(n_1+1)(n_2+1)-1}$ and may be given explicitly by
the equation
\[
  z_0w_0+\cdots+z_{n_1}w_{n_1}=0
\]
in the homogeneous coordinates $[z_0:\cdots :z_{n_1}]\in\C
P^{n_1}$ and $[w_0:\cdots :w_{n_2}]\in\C P^{n_2}$, assuming that
$n_1\le n_2$. Also, $H(n_1,n_2)$ can be identified with the
projectivisation $\C P(\zeta)$ of a certain $n_2$-plane bundle
over~$\C P^{n_1}$. The bundle $\zeta$ is not a sum of line bundles
when $n_1>1$, so $H(n_1,n_2)$ is \emph{not} a toric manifold in
this case (see~\cite[\S9.1]{bu-pa}).

Buchstaber and Ray~\cite{bu-ra98r} introduced a family
$B(n_1,n_2)$ of \emph{toric} generators of~$\varOmega^U$. Each
$B(n_1,n_2)$ is the projectivisation of a sum of $n_2$ line
bundles over the \emph{bounded flag manifold}~${\mbox{\it
BF\/}}_{n_1}$. Then $B(n_1,n_2)$ is a toric manifold, because
${\mbox{\it BF\/}}_{n_1}$ is toric and the projectivisation of a
sum of line bundles over a toric manifold is toric.

We have $H(0,n_2)=B(0,n_2)=\C P^{n_2-1}$, so
\[
  s_{n_2-1}[H(0,n_2)]=s_{n_2-1}[B(0,n_2)]=n_2.
\]
Furthermore,
\begin{equation}\label{snmilnor}
  s_{n_1+n_2-1}[H(n_1,n_2)]= s_{n_1+n_2-1}[B(n_1,n_2)]=
  -\bin{n_1+n_2}{n_1}\quad\text{ for $n_1>1$},
\end{equation}
see~\cite[\S9.1]{bu-pa} for the details.

We shall need the following two facts from number theory.

\begin{theorem}[Lucas]\label{lucas}
Let $p$ be a prime, and let
\begin{align*}
  n&=n_0+n_1p+\cdots+n_{k-1}p^{k-1}+n_kp^k,\\
  m&=m_0+m_1p+\cdots+m_{k-1}p^{k-1}+m_kp^k
\end{align*}
be the base $p$ expansions of positive integers $m$ and $n$. Then
\[
  \binom nm\equiv\binom{n_0}{m_0}\binom{n_1}{m_1}\cdots\binom{n_k}{m_k}\mod
  p.
\]
Here the standard convention $\bin mn=0$ if $m<n$ is used.
\end{theorem}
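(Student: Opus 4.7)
The plan is to give the classical generating-function proof using the Frobenius identity $(1+x)^p\equiv 1+x^p\pmod p$ in $\mathbb F_p[x]$. First I would establish this identity, which is immediate from the binomial theorem together with the observation that $p\mid\binom{p}{j}$ for $1\le j\le p-1$: indeed, $j!\,(p-j)!\,\binom{p}{j}=p!$, and the prime $p$ appears on the right but not in $j!\,(p-j)!$.

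Next I would iterate: raising both sides of $(1+x)^p\equiv 1+x^p$ to the $p$-th power and repeating yields $(1+x)^{p^i}\equiv 1+x^{p^i}\pmod p$ for every $i\ge 0$. Writing $n=n_0+n_1p+\cdots+n_kp^k$ with $0\le n_i<p$, I would then compute
\[
  (1+x)^n=\prod_{i=0}^{k}\bigl((1+x)^{p^i}\bigr)^{n_i}\equiv\prod_{i=0}^{k}(1+x^{p^i})^{n_i}\pmod p.
\]
Expanding the right-hand side by the binomial theorem gives
\[
  \prod_{i=0}^{k}\sum_{j_i=0}^{n_i}\binom{n_i}{j_i}x^{j_ip^i}
  =\sum_{(j_0,\ldots,j_k)}\binom{n_0}{j_0}\cdots\binom{n_k}{j_k}\,x^{j_0+j_1p+\cdots+j_kp^k}.
\]

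The key observation is that since $0\le j_i<p$ for each $i$ (using the convention $\binom{n_i}{j_i}=0$ when $j_i>n_i$), the exponent $j_0+j_1p+\cdots+j_kp^k$ is itself a base-$p$ representation with digits in $\{0,1,\ldots,p-1\}$. Hence, for $m=m_0+m_1p+\cdots+m_kp^k$ the uniqueness of base-$p$ expansions forces $j_i=m_i$ for all $i$, so the coefficient of $x^m$ on the right is exactly $\prod_i\binom{n_i}{m_i}$. Comparing with the coefficient $\binom{n}{m}$ of $x^m$ on the left yields the congruence. I do not anticipate any real obstacle here; the only subtlety is the uniqueness-of-digits argument, which is precisely what makes the convention $\binom{n_i}{m_i}=0$ for $m_i>n_i$ the right convention to adopt in the statement.
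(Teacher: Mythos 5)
Your proof is correct and is precisely the classical Frobenius/generating-function argument; the paper itself does not prove Lucas' Theorem but refers to Fine~\cite{fine47}, whose proof is this very argument via $(1+x)^{p^i}\equiv 1+x^{p^i}\bmod p$ and uniqueness of base-$p$ digits. So you have reproduced the intended (cited) proof with no gaps.
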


For the proof, see e.g.~\cite[Lemma~2.6]{st-ep62}.

\begin{proposition}\label{gcdbinom}
For any integer $n>0$, we have
\[
  \gcd\biggl\{\binom ni,\;0< i< n\biggr\}=\begin{cases}
    1&\text{if}\quad n\ne p^s\quad\text{for any prime }p,\\
    p&\text{if}\quad n=p^s\quad\text{for some prime $p$ and integer~$s>0$.}
  \end{cases}
\]
\end{proposition}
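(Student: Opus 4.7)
The plan is to reduce everything to Lucas' theorem (Theorem~\ref{lucas}), splitting the argument according to whether $n$ is a prime power.

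First I would handle the case $n=p^s$. The base-$p$ expansion of $n$ consists of a single nonzero digit $n_s=1$. For any $0<i<n$, the base-$p$ expansion of $i$ has $i_s=0$ and at least one $i_j>0$ with $j<s$. Lucas' theorem then forces $\binom{n}{i}$ to contain a factor $\binom{0}{i_j}=0$ modulo $p$, so $p\mid\binom{n}{i}$, and hence $p$ divides the gcd. To see that the gcd equals $p$ exactly, I would pinpoint a binomial coefficient of $p$-adic valuation exactly $1$ via the identity $\binom{p^s}{p^{s-1}}=p\,\binom{p^s-1}{p^{s-1}-1}$: the base-$p$ expansion of $p^s-1$ consists of $s$ digits all equal to $p-1$, and that of $p^{s-1}-1$ consists of $s-1$ digits all equal to $p-1$, so Lucas gives $\binom{p^s-1}{p^{s-1}-1}\equiv\binom{p-1}{p-1}^{s-1}\binom{p-1}{0}\equiv 1\pmod p$. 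Hence $v_p\bigl(\binom{p^s}{p^{s-1}}\bigr)=1$, which pins the gcd to be exactly~$p$.

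Next I would treat the case where $n$ is not a prime power. For each prime~$p$ it suffices to exhibit some $0<i<n$ with $p\nmid\binom{n}{i}$. Writing $n=n_0+n_1p+\cdots+n_kp^k$ in base~$p$, the hypothesis that $n$ is not a power of $p$ yields two possibilities: (a) $n$ has at least two nonzero base-$p$ digits, or (b) $n$ has a single nonzero digit $n_j$ with $1<n_j<p$ (the case $n_j=1$ being excluded as $n$ would then equal $p^j$). In case (a), choose any index $j_0$ with $n_{j_0}>0$ and put $i:=p^{j_0}$; the presence of another nonzero digit ensures $i<n$, and Lucas gives $\binom{n}{i}\equiv n_{j_0}\not\equiv 0\pmod p$. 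In case (b), take $i:=p^j$, for which $0<i<n_j p^j=n$ and Lucas yields $\binom{n}{i}\equiv n_j\not\equiv 0\pmod p$. In either case $p$ fails to divide the gcd, so the gcd is coprime to every prime and therefore equals~$1$.

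The main obstacle is the subtlety of showing the gcd is \emph{exactly} $p$, not some higher power, in the prime-power case; the identity-plus-Lucas trick displayed above resolves this cleanly. Everything else is a direct, essentially mechanical application of Theorem~\ref{lucas} once the base-$p$ expansion of $n$ is examined.
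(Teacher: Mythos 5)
Your proof is correct and follows essentially the same route as the paper: split on whether $n$ is a power of $p$, use Lucas' theorem to exhibit an $i$ with $p\nmid\binom{n}{i}$ when it is not, and use $\binom{p^s}{p^{s-1}}$ as the witness of $p$-adic valuation exactly one when it is. The only deviations are minor: the paper cites Kummer's theorem for $p^2\nmid\binom{p^s}{p^{s-1}}$, where you instead use the identity $\binom{p^s}{p^{s-1}}=p\binom{p^s-1}{p^{s-1}-1}$ together with Lucas, and your witness $i=p^{j_0}$ in the non-prime-power case replaces the paper's choice $i=n-p^k$; both substitutions work.
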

\begin{proof}
%
Assume $n=p^s$. Then each $\bin ni$ with $0<i<n$ is divisible
by~$p$. On the other hand, $\bin{p^s}{p^{s-1}}$ is not divisible
by~$p^2$, e.g. by Kummer's Theorem.

Now assume $n\ne p^s$. Write the base $p$ expansion
\[
  n=n_0+n_1p+\cdots+n_{k-1}p^{k-1}+n_kp^k,
\]
where we may assume $n_k>0$. Take
\[
  i=n_0+n_1p+\cdots+n_{k-1}p^{k-1}+(n_k-1)p^k.
\]
Then $i\ne0$ as otherwise $n=p^k$. By Theorem~\ref{lucas}, $\bin
ni\equiv n_k\not\equiv0\mod p$.
\end{proof}

The fact that each of the families $\{[H(n_1,n_2)]\}$ and
$\{[B(n_1,n_2)]\}$ generates the unitary bordism
ring~$\varOmega^U$ follows from~\eqref{snmilnor},
Proposition~\ref{gcdbinom} and Theorem~\ref{novikov}.

We proceed to describe another family of toric generators
for~$\varOmega^U$.

\begin{construction}\label{Pn1n2}
Given two positive integers $n_1$, $n_2$, we define the manifold
$L(n_1,n_2)$ as the projectivisation $\C P
(\eta\oplus\underline{\C}^{n_2})$, where $\eta$ is the
tautological line bundle over~$\C P^{n_1}$. This $L(n_1,n_2)$ is a
particular case of manifolds from Example~\ref{projex1}, so it is
a projective toric manifold with the corresponding
matrix~\eqref{Lambdatoric} given by\\
\begin{equation}\label{LambdaPn1n2}
  \begin{pmatrix}
  \noalign{\vspace{-1\normalbaselineskip}}
  \multicolumn{3}{c}{\scriptstyle n_1}\\[-5pt]
  \multicolumn{3}{c}{$\downbracefill$}\,\\
  1&0&0&-1                                           & & & &  \\
  0&\ddots&0&\vdots    &&&\textrm{\huge 0} &                  \\
  0&0&1&-1                                           & & & &  \\
   & & &1       &                                   1&0&0&-1\\
   &\textrm{\huge 0} & &0 &             0&\ddots&0&\vdots\\
   & & &0   &                                  0&0&1&-1 \\[-5pt]
   &&&&\multicolumn{3}{c}{$\upbracefill$\ }\\[-5pt]
   &&&&\multicolumn{3}{c}{\scriptstyle n_2}\\
  \noalign{\vspace{-1\normalbaselineskip}}
  \end{pmatrix}
  \vspace{1\normalbaselineskip}
\end{equation}
The cohomology ring is given by
\begin{equation}\label{cohomproj2}
  H^*\bigl(L(n_1,n_2)\bigr)\cong\Z[u,v]\big/\bigl(u^{n_1+1},v^{n_2+1}-uv^{n_2}\bigr)
\end{equation}
with $u^{n_1}v^{n_2}\langle L(n_1,n_2)\rangle=1$. There is an
isomorphism of complex bundles
\begin{equation}\label{TPn1n2}
  \mathcal T
  L(n_1,n_2)\oplus\underline{\C}^2\cong
  \underbrace{p^*\bar\eta\oplus\cdots\oplus p^*\bar\eta}_{n_1+1}\oplus
  (\bar\gamma\otimes p^*\eta)\oplus
  \underbrace{\bar\gamma\oplus\cdots\oplus\bar\gamma}_{n_2},
\end{equation}
where $\gamma$ is the tautological line bundle over $L(n_1,n_2)=\C
P(\eta\oplus\underline{\C}^{n_2})$. The total Chern class is
\begin{equation}\label{cproj2}
  c\bigl(L(n_1,n_2)\bigr)=(1+u)^{n_1+1}(1+v-u)(1+v)^{n_2}
\end{equation}
with $u=c_1(p^*\bar\eta)$ and $v=c_1(\bar\gamma)$. We also set
$L(n_1,0)=\C P^{n_1}$ and $L(0,n_2)=\C P^{n_2}$, then the
identities \eqref{cohomproj2}--\eqref{cproj2} still hold.
\end{construction}

\begin{lemma}\label{lemma1}
For $n_2>0$, we have
\[
  s_{n_1+n_2}\bigl[L(n_1,n_2)\bigr]=
  \bin{n_1+n_2}0-\bin{n_1+n_2}1+\cdots+(-1)^{n_1}\bin{n_1+n_2}{n_1}+n_2.
\]
\end{lemma}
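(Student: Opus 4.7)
My plan is to compute $s_{n_1+n_2}[L(n_1,n_2)]$ by reading off the Chern roots of the stable tangent bundle from the splitting \eqref{TPn1n2} and then evaluating the resulting polynomial in $u,v$ on the fundamental class using the cohomology description \eqref{cohomproj2}.

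First I would recall that if a complex bundle stably splits as a sum of line bundles with first Chern classes $x_1,\dots,x_r$, then $s_n$ of that bundle is $x_1^n+\cdots+x_r^n$, since $s_n$ is the Newton polynomial that converts elementary symmetric functions into power sums. Applying this to \eqref{TPn1n2}: the Chern root $u=c_1(p^*\bar\eta)$ appears $n_1+1$ times, the Chern root $v-u=c_1(\bar\gamma\otimes p^*\eta)$ appears once, and $v=c_1(\bar\gamma)$ appears $n_2$ times. Hence
\[
  s_{n_1+n_2}\bigl(\mathcal T L(n_1,n_2)\bigr)=(n_1+1)u^{n_1+n_2}+(v-u)^{n_1+n_2}+n_2\,v^{n_1+n_2}.
\]

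Next I would reduce each term modulo the ideal $(u^{n_1+1},v^{n_2+1}-uv^{n_2})$. Since $n_2>0$ the exponent $n_1+n_2$ exceeds $n_1$, so the first term vanishes. The relation $v^{n_2+1}=uv^{n_2}$ iterates to $v^{n_2+j}=u^jv^{n_2}$ for $0\le j\le n_1$, so $v^{n_1+n_2}=u^{n_1}v^{n_2}$ and the third term contributes $n_2\,u^{n_1}v^{n_2}$. For the middle term I expand by the binomial theorem:
\[
  (v-u)^{n_1+n_2}=\sum_{k=0}^{n_1+n_2}(-1)^k\bin{n_1+n_2}{k}u^k v^{n_1+n_2-k},
\]
and only the terms with $0\le k\le n_1$ survive the relation $u^{n_1+1}=0$. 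For each such $k$, $v^{n_1+n_2-k}=u^{n_1-k}v^{n_2}$ by the iterated relation, so $u^kv^{n_1+n_2-k}=u^{n_1}v^{n_2}$. This collapses the sum to $\bigl(\sum_{k=0}^{n_1}(-1)^k\binom{n_1+n_2}{k}\bigr)u^{n_1}v^{n_2}$.

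Finally, using $u^{n_1}v^{n_2}\langle L(n_1,n_2)\rangle=1$, evaluation on the fundamental class yields
\[
  s_{n_1+n_2}\bigl[L(n_1,n_2)\bigr]=\sum_{k=0}^{n_1}(-1)^k\bin{n_1+n_2}{k}+n_2,
\]
which is the claimed identity. There is no real obstacle here; the only care needed is verifying that the iterated application of $v^{n_2+1}=uv^{n_2}$ is valid for all exponents that appear, which is exactly ensured by the cutoff $k\le n_1$ imposed by $u^{n_1+1}=0$.
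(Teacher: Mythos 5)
Your proof is correct and follows essentially the same route as the paper: both compute $s_{n_1+n_2}$ as the power sum of the Chern roots $u$ (with multiplicity $n_1+1$), $v-u$, and $v$ (with multiplicity $n_2$), discard the $u^{n_1+n_2}$ term, and reduce the remaining monomials to $u^{n_1}v^{n_2}$ via the relations in \eqref{cohomproj2} before evaluating on the fundamental class. Your write-up just makes explicit the iterated use of $v^{n_2+1}=uv^{n_2}$, which the paper leaves implicit.
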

\begin{proof}
Using~\eqref{cproj2} and~\eqref{cohomproj2} we calculate
\begin{multline*}
  s_{n_1+n_2}\bigl(L(n_1,n_2)\bigr)=(v-u)^{n_1+n_2}+n_2v^{n_1+n_2}\\
  =\bin{n_1+n_2}0v^{n_1+n_2}-\bin{n_1+n_2}1uv^{n_1+n_2-1}+\cdots+
  (-1)^{n_1}\bin{n_1+n_2}{n_1}u^{n_1}v^{n_2}+n_2v^{n_1+n_2}\\
  =\Bigl(\bin{n_1+n_2}0-\bin{n_1+n_2}1+\cdots
  +(-1)^{n_1}\bin{n_1+n_2}{n_1}+n_2\Bigr)u^{n_1}v^{n_2},
\end{multline*}
and the result follows by evaluating at $\langle
L(n_1,n_2)\rangle$.
\end{proof}

\begin{theorem}\label{projcobgen}
The bordism classes $[L(n_1,n_2)]\in\varOmega^U_{2(n_1+n_2)}$
generate the ring~$\varOmega^U$.
\end{theorem}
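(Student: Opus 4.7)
By Theorem \ref{novikov}, it suffices to prove that for each $n\geq 1$ the greatest common divisor
$d_n:=\gcd\bigl\{s_n[L(n_1,n_2)]:n_1+n_2=n\bigr\}$
equals $1$ when $n+1$ is not a prime power and $p$ when $n+1=p^s$. Write $t_{n_1}:=s_n[L(n_1,n-n_1)]$. By Lemma \ref{lemma1}, together with the telescoping identity $\sum_{k=0}^{n_1}(-1)^k\binom{n}{k}=(-1)^{n_1}\binom{n-1}{n_1}$, we have $t_{n_1}=(-1)^{n_1}\binom{n-1}{n_1}+(n-n_1)$ for $0\leq n_1\leq n-1$, while $L(n,0)=\C P^n$ gives $t_n=n+1$ (so $t_n=t_0$). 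Small cases $n\leq 2$ can be checked by hand.

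The plan is to show that $d_n=\gcd\bigl\{\binom{n+1}{k}:1\leq k\leq n\bigr\}$, after which Proposition \ref{gcdbinom} applied to $n+1$ delivers the required value. Subtracting consecutive expressions yields $t_{n_1}-t_{n_1-1}=(-1)^{n_1}\binom{n}{n_1}-1$ for $1\leq n_1\leq n-1$; combining this relation at $n_1$ and $n_1-1$ via Pascal's rule $\binom{n+1}{n_1}=\binom{n}{n_1}+\binom{n}{n_1-1}$ produces the clean identity
\[
  \binom{n+1}{n_1}=(-1)^{n_1}\bigl(t_{n_1}-2t_{n_1-1}+t_{n_1-2}\bigr),\qquad 2\leq n_1\leq n-1.
\]
Together with $\binom{n+1}{1}=\binom{n+1}{n}=n+1=t_0$, this shows $d_n$ divides every $\binom{n+1}{k}$ for $1\leq k\leq n$.

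For the reverse divisibility, invert Pascal's rule as $\binom{n}{k}=\sum_{j=0}^{k}(-1)^{k-j}\binom{n+1}{j}$, substitute into $t_{n_1}=\sum_{k=0}^{n_1}(-1)^k\binom{n}{k}+(n-n_1)$, and swap the order of summation to arrive at
\[
  t_{n_1}=(1-n_1)(n+1)+\sum_{j=2}^{n_1}(-1)^j(n_1-j+1)\binom{n+1}{j}.
\]
This exhibits each $t_{n_1}$ as an explicit integer combination of $n+1=\binom{n+1}{1}$ and the binomials $\binom{n+1}{j}$, so $\gcd\bigl\{\binom{n+1}{k}:1\leq k\leq n\bigr\}$ divides every $t_{n_1}$. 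Equality $d_n=\gcd\bigl\{\binom{n+1}{k}:1\leq k\leq n\bigr\}$ follows, and Proposition \ref{gcdbinom} finishes the proof.

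The conceptual hurdle is noticing that the relevant binomials are those of $n+1$ rather than $n$ or $n-1$: no single $t_{n_1}$ directly contains a $\binom{n+1}{k}$, and the shift emerges only after combining three consecutive values. Once that is seen, the two-way dictionary between $\{t_{n_1}\}$ and $\{\binom{n+1}{k}\}$ is routine, and the answer aligns perfectly with both the Milnor--Novikov prescription and the Buchstaber--Ray and Milnor-hypersurface families.
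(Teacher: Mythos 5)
Your proof is correct and follows essentially the same route as the paper: your key identity $t_{n_1}-2t_{n_1-1}+t_{n_1-2}=(-1)^{n_1}\binom{n+1}{n_1}$ is exactly the paper's computation of $s_{n_1+n_2}\bigl[L(n_1,n_2)-2L(n_1-1,n_2+1)+L(n_1-2,n_2+2)\bigr]$, after which both arguments invoke Proposition~\ref{gcdbinom} (applied to $n+1$) and Theorem~\ref{novikov}. Your additional verification that $\gcd\bigl\{\binom{n+1}{k}\bigr\}$ divides every $t_{n_1}$ is fine but redundant, since $s_n$ of any class in $\varOmega^U_{2n}$ is automatically divisible by $m_n$.
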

\begin{proof}
Assuming $[L(n_1,n_2)]=0$ when $n_1<0$, we calculate using
Lemma~\ref{lemma1}:
\begin{multline*}
  s_{n_1+n_2}\bigl[L(n_1,n_2)-2L(n_1-1,n_2+1)+L(n_1-2,n_2+2)\bigr]\\[2pt]
  =(-1)^{n_1-1}\bin{n_1+n_2}{n_1-1}+(-1)^{n_1}\bin{n_1+n_2}{n_1}
  -2(-1)^{n_1-1}\bin{n_1+n_2}{n_1-1}=(-1)^{n_1}\bin{n_1+n_2+1}{n_1}.
\end{multline*}
The result follows from Proposition~\ref{gcdbinom} and
Theorem~\ref{novikov}.
\end{proof}

Theorem~\ref{projcobgen} implies that any unitary bordism class
can be represented by a disjoint union of products of projective
toric manifolds. Products of toric manifolds are toric, but
disjoint unions are not, as toric manifolds are connected. In
bordism theory, a disjoint union may be replaced by a connected
sum, representing the same bordism class. However, connected sum
is not an algebraic operation, and a connected sum of two
algebraic varieties is rarely algebraic. This can be remedied by
appealing to quasitoric manifolds, as explained next. Recall that
an omnioriented quasitoric manifold has an intrinsic stably
complex structure, arising from the isomorphism of
Theorem~\ref{tangentqtoric}. One can form the equivariant connected
sum of quasitoric manifolds, as explained in Davis and Januszkiewicz~\cite{da-ja91}, but
the resulting invariant stably complex structure does not
represent the cobordism sum of the two original manifolds. A more
intricate connected sum construction is needed, as outlined below.
The details can be found in~\cite{b-p-r07} or~\cite[\S9.1]{bu-pa}.

\begin{construction}\label{diamondsum}
The construction applies to two omnioriented $2n$-dimensional
quasitoric manifolds $M$ and $M'$ over $n$-polytopes $P$ and $P'$
respectively. The connected sum will be taken at the fixed points
of $M$ and $M'$ corresponding to vertices $v\in P$ and $v'\in P'$.
We need to assume that $v$ is the intersection of the first $n$
facets of $P$, i.e. $v=F_1\cap\cdots\cap F_n$, and the
corresponding characteristic matrix~\eqref{Lambdaqtoric} of~$M$ is
in the \emph{refined form}, i.e.
\[
\varLambda=\left(I\;|\;\varLambda_\star\right)\;=\;\begin{pmatrix}
  1&0&0&\lambda_{1,n+1}&\ldots&\lambda_{1,m}\\
  0&\ddots&0&\vdots&\ddots&\vdots\\
  0&0&1&\lambda_{n,n+1}&\ldots&\lambda_{n,m}
\end{pmatrix}
\]
where $I$ is the unit matrix and $\varLambda_\star$ is an
$n\times(m-n)$-matrix. The same assumptions are made for $M'$,
$P'$, $v'$ and $\varLambda'$.

The next step depends on the \emph{signs} of the fixed points,
$\sigma(v)$ and $\sigma(v')$. The sign of $v$ is determined by the
omniorientation data; it is $+1$ when the orientation of $\mathcal
T_v M$ induced from the global orientation of $M$ coincides with
the orientation arising from
$\rho_1\oplus\cdots\oplus\rho_n|_{v}$, and is $-1$ otherwise.

If $\sigma(v)=-\sigma(v')$, then we take the connected sum $M\cs
M'$ at $v$ and $v'$. It is a quasitoric manifold over $P\cs P'$
with the characteristic matrix
$\left(\varLambda_\star\;|\;I\;|\;\varLambda'_\star\right)$.

If $\sigma(v)=\sigma(v')$, then we need an additional connected
summand. Consider the quasitoric manifold $S=S^2\times\cdots\times
S^2$ over the $n$-cube~$I^n$, where each $S^2$ is the quasitoric
manifold over the segment $I$ with the characteristic matrix
$(1\;1)$. It represents zero in $\varOmega^U$, and may be thought
of as $\C P^1$ with the stably complex structure given by the
isomorphism $\mathcal T\C
P^1\oplus\underline{\R}^2\cong\bar\eta\oplus\eta$. The
characteristic matrix of $S$ is therefore $(I\;|\;I)$. Now
consider the connected sum $M\cs S\cs M'$. It is a quasitoric
manifold over $P\cs I^n\cs P'$ with the characteristic matrix
$\left(\varLambda_\star\;|\;I\;|\;I\;|\;\varLambda'_\star\right)$.

In either case, the resulting omnioriented quasitoric manifold
$M\cs M'$ or $M\cs S\cs M'$ with the canonical stably complex
structure represents the sum of bordism classes
$[M]+[M']\in\varOmega^U_{2n}$.
\end{construction}

The conclusion, which can be derived from the above construction
and any of the toric generating sets $\{B(n_1,n_2)\}$ or
$\{L(n_1,n_2)\}$ for~$\varOmega^U$, is as follows:

\begin{theorem}[\cite{b-p-r07}]\label{6.11}
In dimensions $>2$, every unitary bordism class contains a
quasitoric manifold, necessarily connected, whose stably complex
structure is induced by an omniorientation, and is therefore
compatible with the torus action.
\end{theorem}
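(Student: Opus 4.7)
The plan is to combine the new toric generating set from Theorem~\ref{projcobgen} with the connected-sum operation of Construction~\ref{diamondsum}. By Theorems~\ref{novikov} and~\ref{projcobgen}, every class $\alpha\in\OU_{2n}$ can be written as
$$
  \alpha=\sum_{j=1}^k\varepsilon_j[N_j],\qquad \varepsilon_j\in\{\pm1\},
$$
where each $N_j$ is a Cartesian product of manifolds of the form $L(n_1,n_2)$ (with possible $\C P^n$ factors). A product of toric manifolds is toric with the product fan, so each $N_j$ is a connected projective toric manifold of real dimension~$2n$, and its canonical complex structure supplies a natural omniorientation. Reversing the global orientation of $N_j$ while keeping the orientations of its characteristic submanifolds unchanged flips the sign $\sigma(v)$ at every vertex of the associated polytope and multiplies the bordism class by~$-1$; write $N_j^{\varepsilon_j}$ for the omnioriented manifold so chosen that $\varepsilon_j[N_j]=[N_j^{\varepsilon_j}]$. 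Then $\alpha=\sum_j[N_j^{\varepsilon_j}]$ is a sum of bordism classes of connected omnioriented quasitoric manifolds, all of the same dimension.

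Next, I would apply Construction~\ref{diamondsum} iteratively. At each step, choose a vertex in each of the two polytopes being combined; by a $GL(n,\Z)$ change of lattice basis — legitimate because the $n$ characteristic vectors at any vertex of a quasitoric polytope form a $\Z$-basis of $\Z^n$ — bring the corresponding characteristic matrix into refined form. Form
$$
  M\;=\;N_1^{\varepsilon_1}\cs N_2^{\varepsilon_2}\cs\cdots\cs N_k^{\varepsilon_k},
$$
inserting the auxiliary summand $S=S^2\times\cdots\times S^2$ at any stage where the signs of the two fixed points being glued coincide (recall $[S]=0$, so this does not affect the resulting bordism class). Each connected sum produces an omnioriented quasitoric manifold whose bordism class is the sum of the summands' classes, so the final $M$ is a single connected omnioriented quasitoric manifold representing $\alpha$. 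Its stably complex structure arises from the omniorientation via the bundle isomorphism of Theorem~\ref{tangentqtoric}, and is therefore $T^n$-invariant by construction.

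The hypothesis $\dim>2$ is essential: $\OU_2\cong\Z$ is generated by $[\C P^1]$, while the only connected quasitoric manifolds in real dimension~$2$ are copies of $S^2=\C P^1$, so classes such as $2[\C P^1]$ admit no connected quasitoric representative. For $n>1$ no analogous obstruction appears because iterated connected sums of $n$-polytopes genuinely enlarge the underlying combinatorial type, and the refined-form hypothesis at any chosen vertex is always attainable by a lattice automorphism. All the substantive work — producing a toric generating family and turning an algebraic sum of bordism classes into a single omnioriented manifold compatibly with the stably complex structure — is already packaged in Theorem~\ref{projcobgen} and Construction~\ref{diamondsum}; the argument above is essentially bookkeeping that chains them together.
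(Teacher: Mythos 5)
Your argument is correct and is essentially the derivation the paper has in mind: Theorem~\ref{6.11} is quoted from \cite{b-p-r07}, and the text only remarks that it follows by combining a toric generating set (such as Theorem~\ref{projcobgen}) with Construction~\ref{diamondsum} — precisely the decomposition into $\pm$\,products of toric generators, sign adjustment by omniorientation, and iterated connected sums that you carry out. One small wording caveat: the representative of $-[N_j]$ is obtained by reversing the global orientation while keeping the directions $\lambda_i$ (the complex structures on the bundles $\rho_i$) fixed — keeping the orientations of the characteristic submanifolds in the paper's convention would instead flip every $\lambda_i$ — but this is exactly the operation the paper itself invokes in the proof of Theorem~\ref{mainth}, so your outline matches the intended proof.
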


\section{Special unitary bordism}
\subsection*{Basics}
A \emph{special unitary structure} (an \emph{$SU$-structure} for short) on a manifold $M$ is a stably complex structure $c_{\mathcal T}$ with a choice of an $SU$-structure on the complex bundle~$\xi$, see~\eqref{stabcom}. A stably complex manifold $(M,c_{\mathcal T})$ admits an $SU$-structure if and only if $c_1(\xi)=0$.
Bordism classes of $SU$-manifolds form the \emph{special unitary
bordism ring}~$\OSU$.

The ring structure of $\OSU$ is more subtle than that
of~$\varOmega^U$. Novikov~\cite{novi62} described
$\OSU\otimes\Z[\frac12]$ (it is a polynomial ring). The 2-torsion
was described by Conner and Floyd~\cite{co-fl66m}. For the
description of the ring structure in $\OSU$ (which is not a
polynomial ring, even modulo torsion), see~\cite{ston68}. We shall
need the following facts.

{\samepage
\begin{theorem}\
\begin{itemize}
\item[(a)] The kernel of the forgetful map $\OSU\to\OU$
consists of torsion elements.

\item[(b)] Every torsion element in $\OSU$ has order~$2$.

\item[(c)] $\OSU\otimes\Z[{\textstyle\frac12}]$ is a polynomial algebra on
generators in every even degree~$>2$:
\[
  \OSU\otimes\Z[{\textstyle\frac12}]\cong
  \Z[{\textstyle\frac12}][y_i\colon i>1],\quad \deg y_i=2i.
\]
\end{itemize}
\end{theorem}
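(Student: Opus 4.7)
The plan is to prove the three statements in order (a), (b), (c), each building on structural results about the Thom spectrum $MSU$. For (a), I would invoke the long exact sequence of Conner--Floyd and Novikov relating $\OU$ and $\OSU$, arising from the cofibre sequence of Thom spectra associated to the fibration $BSU\to BU\xrightarrow{c_1}\C P^{\infty}$. In bordism this takes the shape
\begin{equation*}
\cdots\to\OSU_{n-2}\xrightarrow{\;\cdot\theta\;}\OSU_n\to\OU_n\xrightarrow{\;\partial\;}\OSU_{n-3}\to\cdots
\end{equation*}
for a distinguished class~$\theta$, which one checks is $2$-torsion by a direct characteristic-number calculation on a low-dimensional $SU$-representative. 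The kernel of the forgetful map is then the image of multiplication by~$\theta$, so it consists of torsion elements.

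For (b), I would analyse the $p$-primary structure of $\pi_*(MSU)$ prime by prime. At an odd prime~$p$, the map $\OSU\otimes\Z_{(p)}\to\OU\otimes\Z_{(p)}$ can be shown to be injective (e.g.\ via standard $p$-local splitting techniques applied to $MSU$), and $\OU$ is torsion-free, so $\OSU$ carries no odd torsion. At the prime~$2$, the mod-$2$ cohomology of $MSU$ admits a known decomposition as a module over the Steenrod algebra that, fed into the Adams spectral sequence, rules out any $2$-primary torsion of order greater than~$2$ in $\pi_*(MSU)$. This is the technical heart of the argument and the step I expect to be the main obstacle.

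For (c), I combine (a) and (b). By (b), every torsion class in $\OSU$ dies after inverting~$2$, and by (a), the forgetful map becomes injective:
\begin{equation*}
\OSU\otimes\Z[{\textstyle\tfrac12}]\hookrightarrow\OU\otimes\Z[{\textstyle\tfrac12}].
\end{equation*}
Writing $\OU\otimes\Z[\tfrac12]=\Z[\tfrac12][x_i:i\ge1]$ with $\deg x_i=2i$ (Theorem~\ref{novikov}) and taking $x_1=[\C P^1]$ so that $c_1[x_1]\ne0$, I would modify each $x_i$ with $i>1$ by a suitable polynomial in $x_1$ to obtain classes $y_i$ with vanishing first Chern number; these lie in the image of the forgetful map and so lift to $\OSU\otimes\Z[\tfrac12]$, while a Poincar\'e-series comparison shows they generate freely. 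The absence of a degree-$2$ generator reflects $\OSU_2\otimes\Z[\tfrac12]=0$, forced by $c_1[\C P^1]\ne 0$ together with injectivity of the forgetful map on $\Z[\tfrac12]$-coefficients.
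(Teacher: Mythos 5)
First, note that the paper does not prove this statement at all: it is quoted as classical background, with references to Novikov~\cite{novi62} (the polynomial structure of $\OSU\otimes\Z[\frac12]$), Conner--Floyd~\cite{co-fl66m} (the torsion) and Stong~\cite{ston68}. So your proposal can only be measured against the classical arguments, and as it stands it has genuine gaps. The sequence you base (a) on does not exist in the form you wrote: $\theta\in\OSU_1$ has degree $1$, so multiplication by $\theta$ cannot map $\OSU_{n-2}$ to $\OSU_n$, and the fibration $BSU\to BU\to\C P^\infty$ does not Thomify to a cofibre sequence having $MU$ as third term (rationally the ranks already rule out any exact sequence of the shape $\OSU_n\to\OU_n\to\OSU_{n-3}$, e.g.\ in degree $2$). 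The correct classical sequence, quoted later in the paper, is the five-term sequence $0\to\OSU_{2n-1}\to\OSU_{2n}\stackrel\alpha\to\mathcal W_{2n}\to\OSU_{2n-2}\to\OSU_{2n-1}\to0$, whose middle term is Conner--Floyd's ring $\mathcal W$, not $\OU$, and whose construction requires the whole $\partial$-- and $\mathcal W$--apparatus. If you only want (a), there is a much shorter route you do not mention: $\pi_*(MSU)\otimes\Q\cong H_*(MSU;\Q)\cong H_*(BSU;\Q)$ maps injectively to $H_*(BU;\Q)\cong\pi_*(MU)\otimes\Q$, so the kernel of the forgetful map is rationally zero, i.e.\ torsion.

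For (b), at odd primes your splitting argument is the standard one, but at $p=2$ you only point to ``a known decomposition of $H^*(MSU;\Z_2)$ fed into the Adams spectral sequence''; that is precisely the substance of the theorem (Novikov's computation together with Conner--Floyd's exclusion of higher $2$-divisibility), and since you supply none of it, (b) is not proved. For (c) there are two further gaps. First, correcting $x_i$ by a polynomial in $x_1$ alone cannot kill all Chern numbers divisible by $c_1$: already in degree $6$ there are two conditions ($c_1^3$ and $c_1c_2$) and only one available correction $x_1^3$; one must work modulo decomposables in all previously constructed generators. Second, and more seriously, you assert without proof that a class all of whose $c_1$-divisible Chern numbers vanish lies in the image of the forgetful map once $2$ is inverted. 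Integrally this is false (for instance $x_1^2\in\mathcal W_4$ is not in the image of $\OSU_4$, only $2x_1^2$ is, as the paper's discussion of the K3 surface shows), and with $2$ inverted it is itself a nontrivial theorem, proved via the same $\mathcal W$/boundary machinery or odd-primary splittings --- i.e.\ exactly the content you are trying to establish. So your proposal is a reasonable roadmap matching the classical literature in outline, but it defers all the essential steps.
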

}

For the further analysis of the ring $\OSU$ we need to consider an
auxiliary ring $\mathcal W$, apparently named after
C.\,T.\,C.~Wall. We describe it following~\cite{co-fl66m}
and~\cite{ston68}.

Let $\partial\colon\OU_{2n}\to\OU_{2n-2}$ be the homomorphism
sending a bordism class $[M^{2n}]$ to the bordism class
$[V^{2n-2}]$ of a submanifold $V^{2n-2}\subset M$ dual to
$c_1(M)$. There is a line bundle $\gamma$ over $M$ corresponding
to $c_1(M)$, and the restriction of $\gamma$ to $V$ is the normal
bundle $\nu(V\subset M)$. The stably complex structure on~$V$ is
defined via the isomorphism $\mathcal T M|_V\cong\mathcal T
V\oplus\nu(V\subset M)$. Then $V$ is an $SU$-manifold, so
$\partial^2=0$. The homomorphism $\partial$ is not a derivation of
$\OU$ though; it satisfies the identity
\[
  \partial(a\cdot b)=a\cdot\partial b+\partial a\cdot b-
  [\C P^1]\cdot\partial a\cdot\partial b.
\]

Let $\mathcal W_{2n}$ be the subgroup of $\OU_{2n}$ consisting of
bordism classes $[M^{2n}]$ such that every Chern number of
$M^{2n}$ of which $c_1^2$ is a factor vanishes. The forgetful
homomorphism decomposes as $\OSU_{2n}\to\mathcal
W_{2n}\to\OU_{2n}$, and the restriction of the boundary
homomorphism $\partial\colon\mathcal W_{2n}\to\mathcal W_{2n-2}$
is defined.

The direct sum $\mathcal W=\bigoplus_{i\ge0}\mathcal W_{2i}$ is
\emph{not} a subring of~$\OU$: one has $[\C P^1]\in\mathcal W_2$,
but $c_1^2[\C P^1\times\C P^1]=8\ne0$, so $[\C P^1]\times[\C
P^1]\notin\mathcal W_4$. However, $\mathcal W$ becomes a
commutative ring with unit with respect to the \emph{twisted
product}
\begin{equation}\label{twprod}
  a\mathbin{*}b=a\cdot b+2[V^4]\cdot\partial a\cdot\partial b,
\end{equation}
where $\cdot$ denotes the product in $\OU$ and $V^4$ is a stably
complex manifold with $c_1^2[V^4]=-1$. One may take $V^4=\C
P^1\times\C P^1-\C P^2$ with the standard complex structure, or
$V^4=\overline{\C P\!}{\,}^2$ with the stably complex structure
defined by the isomorphism $\mathcal T\C
P^2\oplus\underline{\R}^2\cong\bar\eta\oplus\bar\eta\oplus\eta$.

We shall use the notation
\[
  m_i=\begin{cases}
    1&\text{if}\quad i+1\ne p^s\quad\text{for any prime }p,\\
    p&\text{if}\quad i+1=p^s\quad\text{for some prime $p$ and integer~$s>0$,}
  \end{cases}
\]
so that $[M^{2i}]\in\OU_{2i}$ represents a polynomial generator
whenever $s_i[M^{2i}]=\pm m_i$.

\begin{theorem}\label{Wring}\samepage
$\mathcal W$ is a polynomial ring on generators in every even
degree except~$4$:
\[
  \mathcal W\cong
  \Z[x_1,x_i\colon i>2],\quad x_1=[\C P^1],\quad\deg x_i=2i,
\]
with $s_i[x_i]=m_im_{i-1}$, and the boundary operator
$\partial\colon\mathcal W\to\mathcal W$, $\partial^2=0$, given by
\[
  \partial x_1=2, \quad \partial x_{2i}=x_{2i-1},
\]
satisfies the identity
\[
  \partial(a\mathbin{*} b)=a\mathbin{*}\partial b+
  \partial a\mathbin{*} b-
  x_1\mathbin{*}\partial a\mathbin{*}\partial b.
\]
\end{theorem}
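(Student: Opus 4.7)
The plan has three stages: first, verify the ring-theoretic assertions (closure of $\mathcal W$ under $\mathbin{*}$, the ring axioms, and the twisted Leibniz rule); second, exhibit explicit representatives $x_i$ with the claimed $s$-numbers and $\partial$-values; and third, deduce algebraic independence and generation.

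For the first stage, the key is closure: given $a = [M]$, $b = [N] \in \mathcal W$, I would check that $a \mathbin{*} b = ab + 2[V^4] \cdot \partial a \cdot \partial b$ has every Chern number involving $c_1^2$ equal to zero. Expanding $c_1(M \times N)^2 = c_1(M)^2 + 2 c_1(M) c_1(N) + c_1(N)^2$, the first and third contributions vanish since $a, b \in \mathcal W$, and the cross term, when paired with the remaining Chern classes, produces a combination of characteristic numbers of the $c_1$-dual hypersurfaces, i.e.\ of $\partial a$ and $\partial b$. The correction $2[V^4] \cdot \partial a \cdot \partial b$ with $c_1^2[V^4] = -1$ is designed to cancel exactly this cross term. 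Commutativity and the unit $1 \in \mathcal W_0 = \Z$ (with $\partial 1 = 0$) are immediate. The twisted Leibniz rule would then be obtained by expanding $\partial(ab + 2[V^4]\cdot \partial a \cdot \partial b)$ via the given untwisted identity $\partial(uv) = u \partial v + \partial u \cdot v - [\C P^1] \cdot \partial u \cdot \partial v$ together with $\partial^2 = 0$, and repackaging the terms in $\mathbin{*}$-form using the definition.

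For the second stage, I would take $x_1 = [\C P^1]$, so $\partial x_1 = 2$ and $s_1[x_1] = 2 = m_1 m_0$. For each $i \geq 2$, I would construct $x_{2i} \in \mathcal W_{4i}$ as a $\Z$-combination of concrete $\OU$-classes whose $c_1^2$-Chern numbers vanish and whose $s_{2i}$-value equals $m_{2i} m_{2i-1}$; natural building blocks are the projectivisations $L(n_1, n_2)$ of Construction~\ref{Pn1n2}, for which characteristic numbers are accessible via Lemma~\ref{lemma1}. I would then define $x_{2i-1} := \partial x_{2i}$; the relation $\partial^2 = 0$ gives $\partial x_{2i-1} = 0$, and the identity $s_{2i-1}[x_{2i-1}] = m_{2i-1} m_{2i-2}$ is read off from the standard formula expressing $s_{n-1}$ of the $c_1$-dual hypersurface of $M$ in terms of Chern data of $M$.

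The main obstacle is the third stage. For algebraic independence, I would use that $s_n$ is primitive under the Cartesian product, so $s_n$ of an honest $\mathbin{*}$-product of two positive-degree $\mathcal W$-classes reduces, via the definition of $\mathbin{*}$, to an $s$-number involving $\partial$-values that vanishes on decomposables, showing each $x_i$ is $\mathbin{*}$-indecomposable. For generation, I would proceed by induction on $n$, comparing the rank of $\mathcal W_{2n}$ (equal to the rank of $\OU_{2n}$ minus the rank of multiplication by $c_1^2$ from $\OU_{2n-4}$ to $\OU_{2n}$) with the number of $\mathbin{*}$-monomials of degree $2n$ in the proposed generators. The absence of a generator in degree~$4$ matches the fact that $\mathcal W_4$ is $\Z$-free of rank one, already spanned by $x_1 \mathbin{*} x_1$, leaving no room for an $x_2$. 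Finally, any $\mathbin{*}$-indecomposable element of $\mathcal W_{2n}$ is detected by its $s_n$-value, and Proposition~\ref{gcdbinom} together with the computation $s_n[x_n] = m_n m_{n-1}$ (an analog of Theorem~\ref{novikov} in the $\mathcal W$ setting) ensures that $x_n$ is a genuine new generator, yielding the claimed polynomial structure.
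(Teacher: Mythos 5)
The paper does not prove this theorem at all: it is quoted as background, ``following \cite{co-fl66m} and \cite{ston68}'', so there is no internal argument to compare with and your sketch has to stand on its own as a reconstruction of the Conner--Floyd/Stong computation. Its skeleton (closure of $\mathcal W$ under the twisted product, the twisted Leibniz identity, construction of generators, and a rank count matching $p(n)-p(n-2)$ with polynomial generators in degrees $2,6,8,10,\dots$) is the right shape, but the essential content of the theorem is assumed rather than proved.

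The decisive gap is in your third stage: you appeal to ``an analog of Theorem~\ref{novikov} in the $\mathcal W$ setting'', i.e.\ to the statement that an element of $\mathcal W_{2n}$ with $s_n=\pm m_nm_{n-1}$ is a polynomial generator and that this is the minimal attainable value. That statement \emph{is} the theorem you are trying to prove, so invoking it is circular. Proposition~\ref{gcdbinom} only yields the bound $m_n$ relevant for $\OU$; the extra factor $m_{n-1}$ is forced by the vanishing of all Chern numbers containing $c_1^2$, and proving both the divisibility $m_nm_{n-1}\mid s_n[w]$ for every $w\in\mathcal W_{2n}$ (without which generation fails) and its attainability is the hard arithmetic core of Conner--Floyd's argument, absent from your plan. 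Similarly, in stage two you need $x_{2i}$ to satisfy simultaneously $s_{2i}[x_{2i}]=m_{2i}m_{2i-1}$ and $s_{2i-1}[\partial x_{2i}]=m_{2i-1}m_{2i-2}$; the second is not implied by the first, since $s_{2i-1}[\partial M]$ is the characteristic number of $M$ attached to $s_{2i-1}\cdot c_1$, which is independent data, so ``read off from the standard formula'' does not establish that the value comes out right, and defining $x_{2i-1}:=\partial x_{2i}$ only works once this is secured. Two smaller points: your claim that $s_n$ vanishes on $\mathbin{*}$-decomposables is false for $n=2$ (indeed $s_2[x_1\mathbin{*}x_1]=-24$), which is precisely why degree $4$ is exceptional and needs the separate rank-one analysis you only gesture at; and the identification of $\rank\mathcal W_{2n}$ requires showing that the linear conditions given by the numbers $c_1^2c_\omega$ are independent on $\OU_{2n}$, which you assert implicitly.
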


The forgetful map $\alpha\colon\OSU\to\mathcal W$ is a ring
homomorphism; this follows from~\eqref{twprod} because
$\partial\alpha(x)=0$ for any $x\in\OSU$.

The fundamental result relating $\OSU$ and $\mathcal W$ is as
follows:

\begin{theorem}
There is an exact sequence of groups
\[
  0\longrightarrow\OSU_{2n-1}\stackrel\theta\longrightarrow
  \OSU_{2n}\stackrel\alpha\longrightarrow\mathcal W_{2n}
  \stackrel\beta\longrightarrow\OSU_{2n-2}
  \stackrel\theta\longrightarrow\OSU_{2n-1}\longrightarrow0,
\]
where $\theta$ is the multiplication by the generator
$\theta\in\OSU_1\cong\Z_2$, $\alpha$ is the forgetful
homomorphism, and $\alpha\beta=-\partial$.
\end{theorem}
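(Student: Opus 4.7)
The plan is to construct the boundary map $\beta\colon\mathcal W_{2n}\to\OSU_{2n-2}$ geometrically as a refinement of Conner--Floyd's operator $\partial\colon\OU_{2n}\to\OU_{2n-2}$, and then verify exactness at each of the five positions by bordism-theoretic arguments. The guiding principle is that elements of $\mathcal W$ are precisely the $U$-classes for which the codimension-$2$ submanifold dual to $c_1$ inherits a canonical $SU$-structure, which lifts $\partial$ to a well-defined map into~$\OSU$.

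First I would construct $\beta$. For $[M^{2n}]\in\mathcal W_{2n}$, choose a complex line bundle $\gamma$ over $M$ representing $c_1(M)$ and a section transverse to the zero section; let $V^{2n-2}\subset M$ be its zero locus. Since $\nu(V\subset M)\cong\gamma|_V$, the manifold $V$ inherits a stably complex structure from the splitting $\mathcal TM|_V\cong\mathcal TV\oplus\gamma|_V$, and a direct Chern-class computation gives $c_1(V)=c_1(M)|_V-c_1(\gamma)|_V=0$, so $V$ is an $SU$-manifold. Set $\beta[M]=-[V]$; bordism invariance and independence of the choice of $\gamma$ are standard transversality arguments, and $\alpha\beta=-\partial$ is immediate from the construction.

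Next I would verify the three easy $\mathrm{Im}\subseteq\mathrm{Ker}$ inclusions. The identity $\alpha\theta=0$ is immediate because the forgetful image $\alpha(\theta)$ lies in $\OU_1=0$. Next, $\beta\alpha=0$ follows because an $SU$-manifold $M$ has $c_1(M)=0$ literally, so one may take $\gamma$ trivial with a nowhere-vanishing section, giving $V=\varnothing$. For $\theta\beta=0$, I would argue that $\theta\cdot\beta[M]\in\OSU_{2n-1}$ is represented by the unit circle bundle $S(\gamma|_V)\to V$, whose induced stably complex structure is essentially a pullback of that on $V$ (since $\pi^*\gamma|_V$ is canonically trivialised by the radial section of $\gamma$), and which visibly $SU$-bounds the complement in $M$ of an open tubular neighborhood of~$V$ --- a manifold on which $c_1$ vanishes because $\gamma$ trivialises away from~$V$.

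The main obstacle lies in the reverse inclusions, most delicately at~$\mathcal W_{2n}$. Given $[M]\in\mathcal W_{2n}$ with $\beta[M]=0$, the $SU$-submanifold $V$ $SU$-bounds some $W^{2n-1}$, and the strategy is to remove $D(\gamma|_V)\subset M$ and glue in a $2n$-manifold constructed from $W$ together with an extension of~$\gamma$, then adjust the stably complex structure so that $c_1$ vanishes at the cochain level; the resulting honest $SU$-manifold maps to~$[M]$ under~$\alpha$. Parallel surgery arguments yield exactness at $\OSU_{2n}$. Finally, surjectivity of $\theta\colon\OSU_{2n-2}\to\OSU_{2n-1}$ (equivalently, injectivity of the leftmost~$\theta$) is proved by showing that every odd-dimensional $SU$-bordism class factors as multiplication by~$\theta$. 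The technical heart throughout is the careful tracking of stably complex structures under surgery, since the $SU$-condition asks for vanishing of $c_1$ not merely cohomologically but as a classifying map to~$BSU$; one must therefore arrange the gluing data so that $SU$-reductions on the various pieces agree along their common boundaries.
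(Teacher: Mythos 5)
This theorem is not proved in the paper at all: it is quoted as background, the Conner--Floyd exact sequence, with the proof residing in~\cite{co-fl66m} and~\cite{ston68}. So the only question is whether your sketch would stand on its own, and it would not. The easy half is fine: the dual submanifold $V$ of $c_1(M)$ satisfies $c_1(V)=c_1(M)|_V-c_1(\gamma)|_V=0$, and a relative transversality argument does make $\beta[M]=-[V]\in\OSU_{2n-2}$ well defined with $\alpha\beta=-\partial$ and $\beta\alpha=0$. But everything after that is a gesture rather than a proof, and the most telling symptom is that your argument for exactness at $\mathcal W_{2n}$ never uses the defining property of $\mathcal W$ (vanishing of all Chern numbers divisible by $c_1^2$). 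The same surgery sketch, word for word, would ``prove'' exactness of the sequence with $\OU_{2n}$ in place of $\mathcal W_{2n}$, which is false: the construction of $\beta$ above extends verbatim to all of $\OU_{2n}$, and the class $2[\C P^1\times\C P^1]\in\OU_4$ has $\beta=0$ (its image is torsion of order~2 in $\OSU_2$) yet cannot lie in the image of $\alpha$ because $c_1^2[2(\C P^1\times\C P^1)]=16\ne0$. So the Chern-number hypothesis must enter the surgery/gluing step in an essential way, and your outline gives no indication of how; this is exactly where the hard work of Conner and Floyd lies.

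Two further specific problems. First, $\theta\cdot\beta[M]$ is the class of $S^1\times V$ where $S^1$ carries the \emph{nonbounding} $SU$-structure (the image of $\eta$), not the unit circle bundle $S(\gamma|_V)$; identifying these, or showing that the boundary $SU$-structure on $S(\gamma|_V)$ inherited from $M\setminus D(\gamma|_V)$ is the $\theta$-twisted one rather than the bounding one, is precisely the 2-torsion subtlety the theorem encodes, and your parenthetical does not address it --- note that $\theta\beta=0$ is genuinely delicate, since in the analogous $\OU$-picture everything in sight is already null-bordant. Second, surjectivity of $\theta\colon\OSU_{2n-2}\to\OSU_{2n-1}$ and injectivity of $\theta\colon\OSU_{2n-1}\to\OSU_{2n}$ are different statements (different maps), not ``equivalent'' ones, and neither is proved. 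The actual proof in~\cite{co-fl66m},~\cite{ston68} is homotopy-theoretic: one identifies $\mathcal W_{2n}$ with $\widetilde{\OSU}_{2n+2}(\C P^2)$, i.e.\ with $\pi_{2n}$ of $MSU$ smashed with the cofibre of $\eta$, obtains the six-term sequence from the cofibration $S^1\stackrel{\eta}\to S^0\to C\eta$, and then proves that these groups are concentrated in even degrees and inject into $\OU$ with image exactly $\mathcal W$; that computation is the substance of the memoir and has no short geometric shortcut of the kind you propose. If you want to use this theorem in the present context, cite it as the paper does rather than reprove it.
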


Analysing the exact sequence above, one obtains the following
information about the free and torsion parts of~$\OSU$:

\begin{theorem}\label{freetors}\
\begin{itemize}
\item[(a)] $\mathop{\mathrm{Torsion}}(\OSU_n)=0$ unless $n=8k+1$
or $8k+2$, in which case $\mathop{\mathrm{Torsion}}(\OSU_n)$ is a
$\Z_2$-vector space of rank equal to the number of partitions
of~$k$.

\item[(b)] $\OSU_{2i}/\mathop{\mathrm{Torsion}}$ is isomorphic to
$\Ker(\partial\colon\mathcal W\to\mathcal W)$ if $2i\not\equiv
4\mod 8$ and is isomorphic to
$\mathop{\mathrm{Im}}(\partial\colon\mathcal W\to\mathcal W)$ if
$2i\equiv 4\mod 8$.

\item[(c)] There exist $SU$-bordism classes $w_{4k}\in\OSU_{8k}$, $k\ge1$, such that
$\mathop{\mathrm{Im}}\alpha/\mathop{\mathrm{Im}}\partial\cong\Z_2[w_{4k}]$.
Every torsion element of $\OSU$ is uniquely expressible in the
form $P\cdot\theta$ or $P\cdot\theta^2$ where $P$ is a polynomial
in $w_{4k}$ with coefficients $0$ or~$1$.
\end{itemize}
\end{theorem}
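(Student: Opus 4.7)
The plan is to extract all three parts by combining the five-term exact sequence with the torsion-free polynomial structure of $\mathcal W$ and a direct homological computation of $(\mathcal W,\partial)$.

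First I would extract the structural consequences of the exact sequence preceding the theorem. Since $\theta\colon\OSU_{2n-1}\to\OSU_{2n}$ is injective with image $\ker\alpha$, and since $\mathcal W_{2n}$ is torsion-free by Theorem~\ref{Wring}, one immediately gets
\[
  \mathop{\mathrm{Torsion}}\OSU_{2n}=\ker\alpha=\theta(\OSU_{2n-1})\cong\OSU_{2n-1},
  \qquad \OSU_{2n}/\mathop{\mathrm{Torsion}}=\mathop{\mathrm{Im}}\alpha=\ker\beta.
\]
From $\theta\colon\OSU_{2n-2}\twoheadrightarrow\OSU_{2n-1}$ being surjective and $2\theta=0$ in $\OSU_1\cong\Z_2$, every odd-dimensional group $\OSU_{2n-1}$ is a $\Z_2$-vector space. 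This already reduces (a) to identifying those dimensions in which $\OSU_{2n-1}$ is nonzero and counting generators, and reduces (b) to identifying $\ker\beta$ inside $\mathcal W$.

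Next I would exploit the relation $\alpha\beta=-\partial$. It gives $\ker\beta\subseteq\ker\partial$ and $\alpha(\mathop{\mathrm{Im}}\beta)=\mathop{\mathrm{Im}}\partial$, with any discrepancy $\ker\partial/\ker\beta$ injecting into torsion of $\OSU_{2n-2}$, hence into $\OSU_{2n-3}$. So the question reduces to computing the homology $H_*(\mathcal W,\partial)$ and then deciding, at each degree, which of the two candidates $\ker\partial$ or $\mathop{\mathrm{Im}}\partial$ the free quotient $\OSU_{2n}/\mathop{\mathrm{Torsion}}$ matches. Here I would use Theorem~\ref{Wring}: with $\mathcal W=\Z[x_1,x_i\colon i>2]$, $\partial x_1=2$, $\partial x_{2i}=x_{2i-1}$, $\partial x_{2i-1}=0$, the complex $(\mathcal W,\partial)$ splits (after passing mod 2 and using the twisted Leibniz rule carefully) into tensor factors $\Z[x_{2i-1},x_{2i}]$ for each $i\ge2$ with $\partial x_{2i}=x_{2i-1}$, together with $(\Z[x_1],\partial x_1=2)$. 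The key computation is that, reduced modulo~2, the cohomology is the polynomial algebra $\F_2[w_{4k}:k\ge1]$ on classes represented by $x_{4k}^2$ in degree $8k$; this is precisely what produces part~(c) and forces the mod-$8$ dichotomy in~(b).

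With this cohomology in hand, the mod-$8$ periodicity in~(b) emerges as follows. The classes of $H_*(\mathcal W,\partial)$ concentrated in degrees $\equiv 0\pmod 8$ (together with the $x_1$-multiples) land in $\ker\beta$ in dimensions $2i\not\equiv 4\pmod 8$, so there $\ker\beta=\ker\partial$; in dimensions $2i\equiv4\pmod 8$ the would-be extra cohomology classes cannot be in $\ker\beta$ (they are detected by $\theta^2$-actions/Ochanine-type signature obstructions), forcing $\ker\beta=\mathop{\mathrm{Im}}\partial$. Finally, combining $\OSU_{2n-1}\cong\OSU_{2n-2}/\mathop{\mathrm{Im}}\beta$ with the computation of $\mathop{\mathrm{Im}}\alpha/\mathop{\mathrm{Im}}\partial\cong\F_2[w_{4k}]$ yields the rank of the $\Z_2$-vector space $\mathop{\mathrm{Torsion}}(\OSU_{8k+2})\cong\OSU_{8k+1}$ as the dimension of $\F_2[w_{4k}]$ in degree~$8k$, which is the partition function $p(k)$, and likewise for $8k+1$ vs $8k+2$ via a $\theta$-shift.

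The main obstacle will be the precise homological calculation of $(\mathcal W,\partial)$: the product in $\mathcal W$ is the twisted product $*$, not the ordinary one inherited from $\OU$, so $\partial$ is a genuine derivation only up to the correction term $x_1*\partial a*\partial b$. Setting up a clean K\"unneth-type splitting in the presence of this twist, and verifying that the $\Z_2$-reduction nevertheless becomes a polynomial algebra on the classes $w_{4k}$ represented by $x_{4k}^2$, is the technical heart of the argument; the mod-$8$ pattern in~(b) falls out of exactly this computation.
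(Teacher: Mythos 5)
The paper itself does not prove Theorem~\ref{freetors}: it is quoted as classical background, with the one-line remark that it follows by ``analysing the exact sequence above'', and the actual arguments live in Conner--Floyd~\cite{co-fl66m} and Stong~\cite{ston68}. So your sketch has to stand on its own, and while its opening reductions are correct and standard (torsion of $\OSU_{2n}$ equals $\ker\alpha=\theta\cdot\OSU_{2n-1}$ because $\mathcal W$ is torsion free, the free quotient is $\mathrm{Im}\,\alpha\subseteq\ker\partial$, and all odd groups are $\Z_2$-vector spaces), the homological heart of the argument is where it breaks down.

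Concretely: your identification of the homology of $(\mathcal W,\partial)$ is wrong. First, $x_{4k}^2$ has degree $16k$, not $8k$, so it cannot represent the degree-$8k$ generator; and the paper's own concluding remarks record $\alpha(w_{4k})=x_1^{4k}$, so the relevant classes are tied to powers of $x_1$, not to squares of the $x_{4k}$. Second, the homology is \emph{not} concentrated in degrees divisible by $8$: already in degree $4$ the cycles are $\Z\{x_1^2\}$ while the boundaries are $\partial(\Z\{x_1^3\})=2\Z\{x_1^2\}$, so $H_4(\mathcal W,\partial)\cong\Z_2$, generated by $x_1^2$; classes of this kind occur in every degree $\equiv 4 \bmod 8$, and the whole content of part (b) is that precisely these classes are \emph{not} in $\mathrm{Im}\,\alpha$ (e.g. $y_2\mapsto 2x_1^2$, so $\mathrm{Im}\,\alpha_4=\mathrm{Im}\,\partial_4\subsetneq\ker\partial_4$), whereas the degree-$8k$ classes \emph{are} realized by genuine $SU$-classes $w_{4k}$. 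Your proposal supplies neither half: the non-realizability in degrees $\equiv4\bmod 8$ is waved at via ``$\theta^2$-actions/Ochanine-type obstructions'', and the existence of the $w_{4k}$ --- an actual construction, pinned down by the characteristic-number conditions of \cite[(19.3)]{co-fl66m} --- is assumed rather than proved, even though the partition count in (a) is downstream of exactly these facts. Finally, the proposed ``K\"unneth splitting after passing mod 2'' does not work as stated: the twisted product obstructs a tensor decomposition of $(\mathcal W,\partial)$, and reducing mod $2$ kills $\partial x_1=2$, so the mod-$2$ homology is far larger than the integral answer and one would need Bockstein-type bookkeeping, which the sketch does not provide. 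In short, the strategy is the right classical one, but the two hard steps (the correct computation of $H(\mathcal W,\partial)$ with the twist, and the realization/non-realization of its classes in $\OSU$) are missing or incorrect, so the proof does not close.
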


Note that we have
\begin{equation}\label{Wring2}
  \mathcal W\otimes\Z[{\textstyle\frac12}]\cong
  \Z[{\textstyle\frac12}][x_1,x_{2k-1},2x_{2k}-x_1x_{2k-1}\colon k>1],
\end{equation}
where $x_1^2=x_1\mathbin{*}x_1$ is a $\partial$-cycle, and each
$x_{2k-1}$, $2x_{2k}-x_1x_{2k-1}$ is a $\partial$-cycle.

\begin{theorem}\label{yidescr}
There exist elements $y_i\in\OSU_{2i}$, $i>1$, such that
$s_i(y_i)=m_im_{i-1}$ if $i$ is odd, $s_2(y_2)=-48$, and
$s_i(y_i)=2m_{i}m_{i-1}$ if $i$ is even and $i>2$. These elements
are mapped as follows under the forgetful homomorphism
$\alpha\colon\OSU\to\mathcal W$:
\[
  y_2\mapsto 2x_1^2,\quad y_{2k-1}\mapsto x_{2k-1},\quad
  y_{2k}\mapsto 2x_{2k}-x_1x_{2k-1},\quad k>1,
\]
where the $x_i$ are polynomial generators of $\mathcal W$. In
particular, $\OSU\otimes\Z[\frac12]$ embeds into~\eqref{Wring2} as
the polynomial subring generated by $x_1^2$, $x_{2k-1}$ and
$2x_{2k}-x_1x_{2k-1}$.
\end{theorem}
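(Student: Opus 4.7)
The plan is to build each $y_i$ by lifting, via $\alpha\colon\OSU\to\mathcal W$, the $\partial$-cycle named in the statement, and then reading off $s_i(y_i)$ from the resulting image in $\OU$. Using the twisted Leibniz rule of Theorem~\ref{Wring} together with $\partial x_1=2$, $\partial x_{2j}=x_{2j-1}$ and $\partial x_{2j-1}=0$ for $j\ge 2$, one checks at once that $x_1^2=x_1\mathbin{*}x_1$, $x_{2k-1}$ (for $k>1$) and $2x_{2k}-x_1x_{2k-1}$ are all $\partial$-cycles.

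The next step is to show each cycle lies in $\Im\alpha$. By Theorem~\ref{freetors}(b), modulo torsion $\alpha$ surjects onto $\Ker\partial$ when $2i\not\equiv 4\pmod 8$ and onto $\Im\partial$ when $2i\equiv 4\pmod 8$; this handles $y_{2k-1}$ automatically, and handles $y_{2k}$ when $k$ is even. In the remaining cases the cycle must be exhibited as a boundary, and the twisted Leibniz rule yields
\[
  \partial(x_1\mathbin{*}x_{2k})=x_1\mathbin{*}\partial x_{2k}+\partial x_1\mathbin{*}x_{2k}-x_1\mathbin{*}\partial x_1\mathbin{*}\partial x_{2k}=2x_{2k}-x_1x_{2k-1},
\]
and analogously $\partial(x_1\mathbin{*}x_1^2)=2x_1^2$. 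Hence $2x_{2k}-x_1x_{2k-1}\in\Im\partial$ when $k$ is odd, and $2x_1^2\in\Im\partial$; choosing any preimages under $\alpha$ defines $y_i\in\OSU_{2i}$ with the asserted image in $\mathcal W$.

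To verify the $s_i$-values I would use that the inclusion $\mathcal W\hookrightarrow\OU$ converts the twisted product $x_1\mathbin{*}x_{2k-1}$ into the ordinary product $[\C P^1]\cdot x_{2k-1}$ (the correction term in \eqref{twprod} vanishes because $\partial x_{2k-1}=0$), while $x_1\mathbin{*}x_1$ becomes $[\C P^1]^2+8[V^4]$. Since $s_i$ is a Newton power sum of Chern roots, it vanishes on any product of two positive-dimensional unitary manifolds, so for $k>1$
\[
  s_{2k-1}(y_{2k-1})=s_{2k-1}[x_{2k-1}]=m_{2k-1}m_{2k-2},\qquad
  s_{2k}(y_{2k})=2\,s_{2k}[x_{2k}]=2m_{2k}m_{2k-1}.
\]
For $y_2$, taking $V^4=[\C P^1]^2-[\C P^2]$ one computes $s_2[V^4]=0-3=-3$, giving $s_2(y_2)=2\cdot 0+16\cdot(-3)=-48$.

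The main obstacle is precisely the case $2i\equiv 4\pmod 8$, where $\Im\alpha$ is the proper subgroup $\Im\partial\subset\Ker\partial$ and the explicit $\partial$-preimages above must be produced. The final assertion is then a formality: after inverting $2$, Theorem~\ref{freetors}(a) makes $\alpha\otimes\Z[\tfrac12]$ injective; the images of our $y_i$ are, up to units in $\Z[\tfrac12]$, precisely $x_1^2$, $x_{2k-1}$ and $2x_{2k}-x_1x_{2k-1}$; and the computed values of $s_i(y_i)$, combined with the polynomial structure of $\OSU\otimes\Z[\tfrac12]$, identify the $y_i$ as polynomial generators, so the image of $\alpha\otimes\Z[\tfrac12]$ is the stated polynomial subring of~\eqref{Wring2}.
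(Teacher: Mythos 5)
The paper does not prove this theorem: it is stated as background, imported from Conner--Floyd \cite{co-fl66m}, Stong \cite{ston68} and Novikov \cite{novi62}, so there is no in-paper argument to compare against. Your reconstruction is correct and is essentially the standard one. All the computations check out: the twisted Leibniz rule does give $\partial(x_1\mathbin{*}x_{2k})=2x_{2k}-x_1\mathbin{*}x_{2k-1}$ and $\partial(x_1^{\mathbin{*}3})=2x_1^{\mathbin{*}2}$; the vanishing of $s_i$ on decomposables together with $s_i[x_i]=m_im_{i-1}$ yields the stated $s_i(y_i)$; and $x_1\mathbin{*}x_1=[\C P^1]^2+8[V^4]$ gives $s_2(y_2)=16\,s_2[V^4]=-48$. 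Two small points could be tightened. First, your appeal to Theorem~\ref{freetors}(b) asks more of it than it literally says (it asserts only an abstract isomorphism of $\OSU_{2i}/\mathop{\mathrm{Torsion}}$ with $\Ker\partial$ or $\Im\partial$, not that $\alpha$ realises it); but this is moot, since every class you need is already a boundary --- $x_{2k-1}=\partial x_{2k}$, $2x_{2k}-x_1x_{2k-1}=\partial(x_1\mathbin{*}x_{2k})$ for \emph{all} $k$, and $2x_1^2=\partial(x_1^{\mathbin{*}3})$ --- and $\Im\partial\subseteq\Im\alpha$ follows directly from the exact sequence via $\alpha\beta=-\partial$, so the case analysis modulo $8$ can be dropped entirely. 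Second, the closing identification of $\Im(\alpha\otimes\Z[\frac12])$ with the polynomial subring still needs the fact that your $y_i$ generate $\OSU\otimes\Z[\frac12]$ (Novikov's $s_i$-criterion for $SU$-generators, or equivalently that $\Im\partial\otimes\Z[\frac12]$ is exactly the subring generated by $x_1^2$, $x_{2k-1}$, $2x_{2k}-x_1x_{2k-1}$); you gesture at this but do not spell it out. Neither point is a substantive gap.
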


\subsection*{Quasitoric $SU$-manifolds} Omnioriented quasitoric manifolds whose
stably complex structures are $SU$ can be detected using the
following simple criterion:

\begin{proposition}[\cite{b-p-r10}]
An omnioriented quasitoric manifold $M$ has $c_1(M)=0$ if and only
if there exists a linear function $\varphi\colon\Z^n\to\Z$ such
that $\varphi(\lambda_i)=1$ for $i=1,\ldots,m$. Here the
$\lambda_i$ are the columns of matrix~\eqref{Lambdaqtoric}.

In particular, if some $n$ vectors of $\lambda_1,\ldots,\lambda_m$
form the standard basis $\mb e_1,\ldots,\mb e_n$, then $M$ is $SU$
if and only if the column sums of $\varLambda$ are all equal
to~$1$.
\end{proposition}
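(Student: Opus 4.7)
The plan is to turn the question of when $c_1(M)=0$ into a question about the row span of the characteristic matrix $\varLambda$, and then recognize that row span as the image of a linear functional on~$\Z^n$.

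First I would compute $c_1(M)$ in terms of the $v_i$. By Theorem~\ref{tangentqtoric} we have $\mathcal T M \oplus \underline{\R}^{2(m-n)} \cong \rho_1 \oplus \cdots \oplus \rho_m$ as real bundles, and an omniorientation supplies each $\rho_i$ with a complex structure. Since the trivial summand contributes nothing to Chern classes, the total Chern class of the stably complex structure is $c(M) = \prod_{i=1}^m (1+v_i)$, where $v_i = c_1(\rho_i)$ is the degree-two class dual to~$M_i$. In particular,
\[
  c_1(M) = v_1 + v_2 + \cdots + v_m \in H^2(M;\Z).
\]

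Next, I would apply Theorem~\ref{cohomqtoric} to decide when this class vanishes. In degree~$2$ the relations of type~(a) contribute nothing, so $H^2(M;\Z)$ is the quotient of the free abelian group on $v_1,\ldots,v_m$ by the sublattice spanned by the $n$ linear forms $\sum_{i=1}^m \lambda_{ji} v_i$, $j=1,\ldots,n$ (i.e.\ the rows of~$\varLambda$). Thus $v_1+\cdots+v_m = 0$ in $H^2(M;\Z)$ if and only if there exists $\mb x = (x_1,\ldots,x_n) \in \Z^n$ with
\[
  \sum_{j=1}^n x_j \lambda_{ji} \;=\; 1 \qquad \text{for every } i=1,\ldots,m.
\]
Defining $\varphi\colon \Z^n \to \Z$ by $\varphi(\mb y) = \langle \mb y, \mb x\rangle$, this is exactly the condition $\varphi(\lambda_i)=1$ for all~$i$, which proves the first statement. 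The two directions are symmetric: given such a $\varphi$, its coefficient vector in the standard basis supplies the required~$\mb x$.

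For the second assertion, I would note that if, after reordering, $\lambda_1,\ldots,\lambda_n$ are the standard basis vectors $\mb e_1,\ldots,\mb e_n$, then $\varphi(\lambda_i)=1$ for $i\le n$ forces $\varphi(\mb e_j)=1$ for every $j$, so $\varphi$ is uniquely determined as $\varphi(\mb y)=y_1+\cdots+y_n$. The remaining conditions $\varphi(\lambda_i)=1$ for $i>n$ then say that every column sum of $\varLambda$ equals~$1$ (the first $n$ column sums are automatically~$1$). This gives the stated criterion. The argument is essentially bookkeeping; the only point one has to be a bit careful about is working integrally rather than rationally, which is automatic here because the rows of $\varLambda$ generate the full sublattice of relations in $H^2(M;\Z)$ rather than merely its rational span.
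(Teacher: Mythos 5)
Your argument is correct and is essentially the paper's own proof: both compute $c_1(M)=v_1+\cdots+v_m$ from Theorem~\ref{tangentqtoric} and then use Theorem~\ref{cohomqtoric} to identify the vanishing of this class in $H^2(M;\Z)$ with the existence of an integral linear functional $\varphi$ with $\varphi(\lambda_i)=1$ for all $i$. Your write-up merely fills in the routine bookkeeping (degree-two part of the ideal, the reduction to column sums) that the paper leaves implicit.
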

\begin{proof}
By Theorem~\ref{tangentqtoric}, $c_1(M)=v_1+\cdots+v_m$. By
Theorem~\ref{cohomqtoric}, $v_1+\cdots+v_m$ is zero in $H^2(M)$ if
and only if $v_1+\cdots+v_m=\sum_i\varphi(\lambda_i)v_i$ for some
linear function $\varphi\colon\Z^n\to\Z$, whence the result
follows.
\end{proof}

\begin{corollary}
A toric manifold $V$ cannot be~$SU$.
\end{corollary}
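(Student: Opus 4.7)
The plan is to derive a contradiction from the completeness of the defining fan via the criterion in the preceding proposition. A toric manifold $V$ carries its canonical complex (and hence omnioriented quasitoric) structure, for which the vectors $\lambda_i$ of the proposition coincide with the primitive generators $\mb a_1,\ldots,\mb a_m$ of the one-dimensional cones of the complete regular fan~$\Sigma$. So the task reduces to showing: no nonzero linear function $\varphi\colon\Z^n\to\Z$ (equivalently, $\varphi\colon\R^n\to\R$) can satisfy $\varphi(\mb a_i)=1$ for all~$i$.

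Suppose for contradiction such a $\varphi$ exists. Then every primitive generator $\mb a_i$ lies in the affine hyperplane $H=\{\mb x\in\R^n:\varphi(\mb x)=1\}$. Since each cone $\sigma\in\Sigma$ is by definition the set of nonnegative linear combinations of some subset of the $\mb a_i$, every vector in every cone of $\Sigma$ can be written as $\sum_j t_j\mb a_{i_j}$ with $t_j\ge0$, hence satisfies $\varphi=\sum_j t_j\ge0$. Therefore $\bigcup_{\sigma\in\Sigma}\sigma\subseteq\{\mb x\in\R^n:\varphi(\mb x)\ge0\}$.

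But $\Sigma$ is complete, so $\bigcup_{\sigma\in\Sigma}\sigma=\R^n$. This forces $\varphi\ge0$ on all of~$\R^n$, which is impossible for a nonzero linear functional. The contradiction shows that no such $\varphi$ exists, so by the proposition $c_1(V)\neq 0$ and $V$ is not~$SU$.

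The main (and essentially only) point is the geometric observation that a complete fan cannot have all its ray generators lying in a single affine hyperplane missing the origin; once that is seen, the corollary is immediate from the proposition. There is no real obstacle here — the argument is a one-line convexity fact, and I expect the author's proof to look essentially identical.
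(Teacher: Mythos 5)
Your argument is correct and is essentially the paper's own proof: the paper likewise notes that $\varphi(\lambda_i)=1$ for all $i$ forces every ray generator, hence every cone, into the halfspace $\{\varphi\ge0\}$, contradicting completeness of the fan. You have merely spelled out the convexity step in more detail.
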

\begin{proof}
If $\varphi(\lambda_i)=1$ for all~$i$, then the vectors
$\lambda_i$ lie in the positive halfspace of~$\varphi$, so they
cannot span a complete fan.
\end{proof}

A more subtle result also rules out low-dimensional quasitoric
manifolds:

\begin{theorem}[{\cite[Theorem~6.13]{b-p-r10}}]\label{lowdimqt}
A quasitoric $SU$-manifold $M^{2n}$ represents $0$ in $\OU_{2n}$
whenever~$n<5$.
\end{theorem}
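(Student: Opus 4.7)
The natural strategy is to use the Milnor--Novikov characterisation that $[M]=0$ in $\OU_{2n}$ if and only if every tangential Chern number $c_\omega[M]$ vanishes, and then exploit the $SU$-condition $c_1(M)=0$ to drastically shorten the list of Chern numbers that need to be checked. Any monomial $c_\omega=c_{i_1}\cdots c_{i_k}$ with some $i_j=1$ evaluates to zero on $M$. Enumerating partitions of $n$ with no part equal to $1$, one is left with: nothing for $n=1$ (so $[M]=0$ trivially); the single invariant $c_2[M]$ for $n=2$; the single invariant $c_3[M]$ for $n=3$; and the two invariants $c_4[M]$, $c_2^2[M]$ for $n=4$. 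So at most two Chern numbers need to be shown to vanish.

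For the top Chern number $c_n[M]$, the plan is to use its combinatorial description on a quasitoric manifold as the signed sum of fixed points $\sum_v\sigma(v)$, where $\sigma(v)=\det(\lambda_{i_1},\ldots,\lambda_{i_n})=\pm1$ is the sign associated to the vertex $v=F_{i_1}\cap\cdots\cap F_{i_n}$ by the omniorientation. The $SU$-condition provides a linear functional $\varphi\colon\Z^n\to\Z$ with $\varphi(\lambda_i)=1$ for every $i$, so that all characteristic vectors lie on an affine hyperplane. I would lift the $\lambda_i$ to vectors $(\lambda_i,1)\in\Z^{n+1}$ on a coordinate hyperplane and use the elementary observation that the signed sum of determinants over the vertex figures of a closed combinatorial sphere vanishes once the configuration is shifted off the origin; this kills $c_n[M]$ for $n=2,3,4$.

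For $n=4$ the additional obstruction $c_2^2[M]=0$ is the main difficulty. I would translate it into Pontryagin data: since $c_1(M)=0$, one has $p_1(M)=-2c_2(M)$ and $p_2(M)=c_2^2(M)+2c_4(M)$, hence $c_2^2[M]=\tfrac14 p_1^2[M]$. An $SU$-manifold is automatically spin, and Pontryagin numbers of a spin $8$-manifold satisfy strong divisibility constraints (index-theoretic, such as integrality of $\widehat A[M]$ and of $(7p_2-p_1^2)[M]/5760$), which combined with the already-established vanishing $c_4[M]=0$ ought to pin $c_2^2[M]$ down. Concretely I would express $c_2^2[M]=\sum_{i<j,\,k<l}v_iv_jv_kv_l\langle M\rangle$ in the face ring, reduce using the linear relation $\sum_iv_i=0$ coming from $SU$ together with the Stanley--Reisner relations, and check directly that the resulting combinatorial expression vanishes. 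The hard part will be this last reduction: unlike the top Chern number, $c_2^2[M]$ has no clean fixed-point interpretation, so a more delicate manipulation of quadratic symmetric functions in the $v_i$ modulo the $SU$ linear relation will be required.
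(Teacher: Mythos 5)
The paper itself does not prove this statement; it is quoted from \cite[Theorem~6.13]{b-p-r10}, so there is no in-paper argument to compare against. Evaluated on its own terms, your proposal has the right skeleton but contains a genuine gap in the $n=4$ case.

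Your reduction of the problem is correct: after using $c_1=0$, only $c_2$ ($n=2$), $c_3$ ($n=3$), and $c_4$, $c_2^2$ ($n=4$) need to vanish. Your plan for the top Chern number is also sound. Indeed $c_n[M]=\sum_{\mathbf v\in P}\sigma(\mathbf v)$ because $c_n=e_n(v_1,\ldots,v_m)$ and only monomials over vertices survive the Stanley--Reisner relations. After a lattice change of basis making $\varphi$ the last coordinate, every $\lambda_i$ has last coordinate $1$, so the simplicial cycle $\sum_{\mathbf v}\sigma(\mathbf v)[\lambda_{i_1},\ldots,\lambda_{i_n}]$ is supported in the affine hyperplane $\{\varphi=1\}$. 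A cycle carried by an affine hyperplane missing the origin has zero linking number with the origin, and $\sum_{\mathbf v}\sigma(\mathbf v)$ is exactly $n!$ times the signed volume of the cone over that cycle; hence it vanishes. (Your description of ``lifting to $\Z^{n+1}$'' is slightly backwards --- the $\lambda_i$ already sit on a coordinate hyperplane --- but the idea is the right one.) This settles $n=1,2,3$ and gives $c_4[M]=0$.

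The gap is in the handling of $c_2^2[M]$. Divisibility consequences of Rokhlin/Hirzebruch/index theory only tell you $c_2^2[M]\in 240\Z$ (from integrality of $\widehat A$ for spin manifolds, using $c_4=0$); no amount of divisibility ``pins $c_2^2$ down'' to zero, because there is no a priori bound on $|c_2^2[M]|$ in terms of the polytope. What is actually needed is a \emph{vanishing} theorem, not an integrality theorem. Two routes close the gap cleanly. (i) \emph{Atiyah--Hirzebruch vanishing}: an $SU$-manifold is spin, the torus (hence any subcircle) preserves the induced spin structure, and a spin manifold with a nontrivial smooth $S^1$-action has $\widehat A[M]=0$. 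Since $\mathrm{Td}[M]=\widehat A[M]=\tfrac{1}{720}(3c_2^2-c_4)[M]$ and $c_4[M]=0$, we get $c_2^2[M]=0$. (ii) \emph{Localization of the Todd genus}: for an omnioriented quasitoric $M$ the $\chi_y$-genus localizes to $\chi_y[M]=\sum_{\mathbf v}\sigma(\mathbf v)(-y)^{\mathrm{ind}_\nu(\mathbf v)}$ for any generic $\nu$; since $\varphi(\lambda_i)=1$ for all $i$, the weights at each vertex satisfy $w_1(\mathbf v)+\cdots+w_n(\mathbf v)=\varphi$, so choosing $\nu$ with $\langle\varphi,\nu\rangle<0$ forces $\mathrm{ind}_\nu(\mathbf v)\ge 1$ for every vertex, whence $\mathrm{Td}[M]=\chi_0[M]=0$. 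Either way you obtain $c_2^2[M]=0$, completing the $n=4$ case. Your ``plan~B'' (direct reduction in the face ring) could conceivably be carried out but you have not done so, and as stated the proposal does not reach the conclusion; the $\chi_y$/Todd-genus vanishing is the missing ingredient and is almost certainly the mechanism in the cited reference, which is after all a paper on toric genera.
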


The reason for this is that the Krichever genus $\varphi_{\mathrm
K}\colon\OU\to R_{\mathrm K}$ vanishes on quasitoric
$SU$-manifolds, but $\varphi_{\mathrm K}$ is an isomorphism in
dimensions~$<10$.

Examples of quasitoric $SU$-manifolds representing nonzero bordism
classes in $\OU_{2n}$ for all $n\ge5$, except~$n=6$, were
constructed in~\cite{lu-wa}. We modify this construction to
present two particular families of quasitoric $SU$-manifolds
representing nonzero bordism classes in $\OU_{2n}$ for all
$n\ge5$, including~$n=6$.

\begin{construction}
Assume that $n_1=2k_1$ is positive even and $n_2=2k_2+1$ is
positive odd, and consider the manifold $L(n_1,n_2)$ from
Construction~\ref{Pn1n2}. We change the stably complex
structure~\eqref{TPn1n2} to the following:
\begin{multline*}
  \mathcal T
  L(n_1,n_2)\oplus\underline{\R}^4\\
  \cong
  \underbrace{p^*\bar\eta\oplus p^*\eta\oplus\cdots\oplus
  p^*\bar\eta\oplus p^*\eta}_{2k_1}\oplus p^*\bar\eta\oplus
  (\bar\gamma\otimes p^*\eta)\oplus
  \underbrace{\bar\gamma\oplus\gamma\oplus\cdots\oplus\bar\gamma\oplus\gamma}_{2k_2}
  \oplus\gamma
\end{multline*}
and denote the resulting stably complex manifold by $\widetilde
L(n_1,n_2)$. Its cohomology ring is given by the same
formula~\eqref{cohomproj2}, but
\begin{equation}\label{chernL}
  c\bigl(\widetilde
  L(n_1,n_2)\bigr)=(1-u^2)^{k_1}(1+u)(1+v-u)(1-v^2)^{k_2}(1-v),
\end{equation}
so $\widetilde L(n_1,n_2)$ is an $SU$-manifold of dimension
$2(n_1+n_2)=4(k_1+k_2)+2$.

\enlargethispage{2\baselineskip}
Viewing $L(n_1,n_2)$ as a quasitoric manifold with the
omniorientation coming from the complex structure, we see that
changing a line bundle $\rho_i$ in~\eqref{TMqtiso} to its
conjugate results in changing $\lambda_i$ to $-\lambda_i$
in~\eqref{Lambdaqtoric}. By applying this operation to the
corresponding columns of~\eqref{LambdaPn1n2} and then multiplying
from the left by an appropriate matrix from
$\mbox{\textit{GL}}\,(n,\Z)$, we obtain that $\widetilde
L(n_1,n_2)$ is the omnioriented quasitoric manifold over
$\varDelta^{n_1}\times\varDelta^{n_2}$ corresponding to the
matrix\\
\[
  \left(\begin{array}{ccccccccccc}
  \noalign{\vspace{-1\normalbaselineskip}}
  \multicolumn{5}{c}{\scriptstyle n_1=2k_1}\\[-5pt]
  \multicolumn{5}{c}{$\downbracefill$}\,\\
  1&0&0&\cdots&0&1                                           & & & & & \\
  0&1&0&\cdots&0&-1                                          & & & & & \\
  \vdots&\ddots&\ddots&\ddots&\vdots&\vdots    &&&\textrm{\huge 0} & & \\
  0&0&0&1&0&1                                                & & & & & \\
  0&0&0&0&1&-1                                               & & & & & \\
   & & & & &1       &                                    1&0&\cdots&0&1\\
   & & & & &        &                                    0&1&\cdots&0&-1\\
   & &\textrm{\huge 0} & & & &            \vdots&\ddots&\ddots&0&\vdots\\
   & & &                 & & &                                0&0&0&1&1\\[-5pt]
   & & & & &        & \multicolumn{4}{c}{$\upbracefill$}\\[-3pt]
   & & & & &        & \multicolumn{4}{c}{\scriptstyle n_2=2k_2+1}\\
  \noalign{\vspace{-1\normalbaselineskip}}
  \end{array}\right)
  \vspace{1\normalbaselineskip}
\]
The column sums of this matrix are~$1$ by inspection.
\end{construction}

\begin{construction}\label{constrN} The previous construction can be iterated by
considering projectivisations of sums of line bundles
over~$L(n_1,n_2)$. We shall need just one particular family of
this sort.

Given positive even $n_1=2k_1$ and odd $n_2=2k_2+1$, consider the
omnioriented quasitoric manifold $\widetilde N(n_1,n_2)$ over
$\varDelta^1\times\varDelta^{n_1}\times\varDelta^{n_2}$ with the
characteristic matrix\\
\[
  \left(\begin{array}{cccccccccccccc}
  1&1\\
   & & 1&0&0&\cdots&0&1                                           & & & & & &\\
   & & 0&1&0&\cdots&0&-1                                          & & & & & &\\
  \textrm{\huge 0}\!\!\!\! & & \vdots&\ddots&\ddots&\ddots&\vdots&\vdots    &&&&\textrm{\huge 0}  & &\\
   & & 0&0&0&1&0&1                                                & & & & & &\\
   & & 0&0&0&0&1&-1                                               & & & & & &\\[5pt]
   \noalign{\vspace{-1\normalbaselineskip}}
   & & \multicolumn{5}{c}{$\upbracefill$}\,\\[-3pt]
   &-1& \multicolumn{5}{c}{\scriptstyle n_1=2k_1} &0   &      1&0&0&\cdots&0&1\\
   &1& & & & & &0       &                                    0&1&0&\cdots&0&-1\\
   &0& & & & & &1       &                                    0&0&1&\cdots&0&1\\
   & & & &\textrm{\huge 0} & & & &  \vdots&\vdots&\vdots&\ddots&\vdots&\vdots\\
   & & & & &                 & & &                                0&0&0&0&1&1\\[-5pt]
   & & & & & & &        & \multicolumn{5}{c}{$\upbracefill$}\\[-3pt]
   & & & & & & &        & \multicolumn{5}{c}{\scriptstyle n_2=2k_2+1}\\
  \noalign{\vspace{-1\normalbaselineskip}}
  \end{array}\right)
  \vspace{1\normalbaselineskip}
\]
The column sums are $1$ by inspection, so $\widetilde N(n_1,n_2)$
is a quasitoric $SU$-manifold of dimension
$2(1+n_1+n_2)=4(k_1+k_2)+4$.

It can be seen that $\widetilde N(n_1,n_2)$ is a projectivisation
of a sum of $n_2+1$ line bundles over $\C P^1\times\C P^{n_1}$
with an amended stably complex structure.

The cohomology ring given by Theorem~\ref{cohomqtoric} is
\begin{equation}\label{cohomN}
  H^*(\widetilde N(n_1,n_2))\cong\Z[u,v,w]\big/
  \bigl(u^2,v^{n_1+1},(w-u)^2(v+w)w^{n_2-2}\bigr)
\end{equation}
with $uv^{n_1}w^{n_2}\langle\widetilde N(n_1,n_2)\rangle=1$. The
total Chern class is
\begin{equation}\label{chernN}
  c(\widetilde N(n_1,n_2))=(1-v^2)^{k_1}(1+v)
  (1-(w-u)^2)(1-v-w)(1-w^2)^{k_2-1}(1+w).
\end{equation}
\end{construction}

\subsection*{Quasitoric representatives for polynomial generators
of~{\rm$\OSU\otimes\Z[\frac12]$}} Our goal is to show that
elements $y_i\in\OSU_{2i}$ described in Theorem~\ref{yidescr} can
be represented by quasitoric $SU$-manifolds when $i\ge5$. This
will be done by calculating the characteristic numbers of
$\widetilde L(n_1,n_2)$ and $\widetilde N(n_1,n_2)$ and then
checking several divisibility conditions for binomial
coefficients. We shall need the following generalisation of Lucas'
Theorem:

\enlargethispage{2\baselineskip}
\begin{theorem}[{\cite[Theorem~1]{gran97}}]\label{granth}
Suppose that prime power $p^q$ and positive integers $m=n+r$ are
given. Write
\[
  n=n_0+n_1p+\cdots+n_{k-1}p^{k-1}+n_kp^k
\]
in base $p$, and let
\[
  N_j=n_j+n_{j+1}p+\cdots+n_{j+q-1}p^{q-1},\quad j\ge0.
\]
Also make the corresponding definitions for $m_j$, $M_j$, $r_j$,
$R_j$. Let $e_j$ be the number of indices $i\ge j$ for which
$n_i<m_i$. Then
\[
  \frac1{p^{e_0}}\binom nm\equiv(\pm1)^{e_{q-1}}
  \frac{N_0!_p}{M_0!_p\,R_0!_p}\cdot\frac{N_1!_p}{M_1!_p\,R_1!_p}
  \cdots\frac{N_k!_p}{M_k!_p\,R_k!_p}\mod p^q,
\]
where $\pm1$ is $-1$ except if $p=2$ and $q\ge3$, and $n!_p$
denotes the product of those positive integers $\le n$ that are not
divisible by~$p$.
\end{theorem}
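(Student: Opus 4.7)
The plan is to follow the same general strategy that proves Lucas' theorem, separating each factorial into its $p$-divisible and coprime-to-$p$ parts, but tracking everything modulo $p^q$ instead of $p$. Writing $m=n+r$ and setting $L!_p^*:=\prod_{1\le i\le L,\,p\nmid i}i$, one has the factorisation $L!=p^{\lfloor L/p\rfloor}\,\lfloor L/p\rfloor!\cdot L!_p^*$. Substituting into $\binom{m}{n}=m!/(n!\,r!)$ yields
\[
  \binom{m}{n}=p^{\epsilon_0}\binom{\lfloor m/p\rfloor}{\lfloor n/p\rfloor}\cdot
  \frac{m!_p^*}{n!_p^*\,r!_p^*},
\]
where $\epsilon_0\in\{0,1\}$ equals the carry in the ones digit when $n+r$ is computed base~$p$. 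Iterating this identity strips one base-$p$ digit at a time, produces the $p$-adic valuation $e_0=\sum_j\epsilon_j$ (Kummer's theorem), and expresses $p^{-e_0}\binom mn$ as a product of ratios $L!_p^*/(\cdots)$ over all digit levels.

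The next step is to organise this product into blocks of $q$ consecutive base-$p$ digits. For any integer $L$ with base-$p$ expansion $L=\sum L_i p^i$, the coprime factorial satisfies a congruence
\[
  L!_p^*\equiv(\pm1)^{\lfloor L/p^q\rfloor}\cdot L_0!_p\;\cdot(\text{higher-block contributions})\pmod{p^q},
\]
which rests on the generalised Wilson congruence $\prod_{1\le i\le p^q,\,p\nmid i}i\equiv\pm1\pmod{p^q}$ (equal to $-1$ except in the anomalous case $p=2,q\ge3$, where it is $+1$). Applying this congruence to each factor of the iterated product, and regrouping the digits of $n,m,r$ into the blocks $N_j,M_j,R_j$ of length~$q$, turns the stripped factorials into the claimed product $\prod_j N_j!_p/(M_j!_p\,R_j!_p)$.

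The final task is bookkeeping the sign. Each time an iteration step crosses an index $i\ge q-1$ at which a carry occurred (equivalently, $n_i<m_i$ after digit propagation), a full Wilson block of residues coprime to $p$ contributes one factor of $\pm1$. Collecting these, the total exponent of $\pm1$ is precisely $e_{q-1}$, the number of carries at positions $\ge q-1$, yielding the stated $(\pm1)^{e_{q-1}}$.

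The main obstacle is the bookkeeping in the last step: one has to verify carefully that the Wilson sign appears exactly once per carry at position $\ge q-1$, and not at carries in lower positions, and that the $N_j,M_j,R_j$ reassemble correctly from the iterated quotient despite the interaction between carries and the block-of-$q$ grouping. The technical content beyond Lucas' theorem lives in this alignment of the generalised Wilson congruence with Kummer's carry-count.
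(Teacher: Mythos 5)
The paper does not prove this statement at all: it is imported verbatim from Granville \cite{gran97} and used as a black box, so the only meaningful comparison is with Granville's own argument --- which is precisely the route you sketch. Your skeleton is the right one: from $L!=p^{\lfloor L/p\rfloor}\lfloor L/p\rfloor!\cdot L!^{*}_p$ one gets $\binom{m}{n}=p^{\varepsilon_0}\binom{\lfloor m/p\rfloor}{\lfloor n/p\rfloor}\cdot m!^{*}_p/(n!^{*}_p\,r!^{*}_p)$, iteration gives $p^{-e_0}\binom{m}{n}=\prod_{j\ge0}(\lfloor m/p^j\rfloor)!^{*}_p\big/\bigl((\lfloor n/p^j\rfloor)!^{*}_p(\lfloor r/p^j\rfloor)!^{*}_p\bigr)$ with $e_0$ the number of carries (Kummer), and each factor is then reduced modulo $p^q$ by the prime-power Wilson congruence, whose sign is $-1$ except for $p=2$, $q\ge3$. (You have also, sensibly, read the binomial coefficient as $\binom{n+r}{n}=m!/(n!\,r!)$; as transcribed in the paper the roles of $m$ and $n$ are garbled.)

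As written, though, the two steps you wave at are exactly where the content sits, and one of them is mis-stated. Your auxiliary congruence $L!^{*}_p\equiv(\pm1)^{\lfloor L/p^q\rfloor}L_0!_p\cdot(\text{higher-block contributions})$ should be the precise lemma $L!^{*}_p\equiv(\pm1)^{\lfloor L/p^q\rfloor}\,\widehat{L}!_p\pmod{p^q}$, where $\widehat{L}=L\bmod p^q$ is the integer formed by the $q$ lowest base-$p$ digits: the $\lfloor L/p^q\rfloor$ complete blocks of $p^q$ consecutive integers contribute only the Wilson sign, the incomplete block contributes $\widehat{L}!_p$, and there is no residual ``higher-block'' factor. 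In particular the $N_j,M_j,R_j$ are not a partition of the digit strings into disjoint pieces of length $q$; they are the \emph{overlapping} truncations $N_j=\lfloor n/p^j\rfloor\bmod p^q$, one for every $j\ge0$, arising when the lemma is applied to each factor of the iterated product. For the sign, your parenthetical ``equivalently, $n_i<m_i$ after digit propagation'' is not an argument, and the digit inequality is genuinely not equivalent to a carry (a carry can propagate through a position where the digit of the sum equals the digit of the summand, e.g.\ $\binom{58}{10}$ with $p=7$ has two carries but only one such index), so $e_j$ must be handled as the number of carries on or beyond the $j$th digit. The correct bookkeeping is short once set up: the lemma produces the total sign exponent $\sum_{j\ge0}\bigl(\lfloor m/p^{j+q}\rfloor-\lfloor n/p^{j+q}\rfloor-\lfloor r/p^{j+q}\rfloor\bigr)$, and since $\lfloor m/p^{i}\rfloor-\lfloor n/p^{i}\rfloor-\lfloor r/p^{i}\rfloor$ is exactly the carry into the $i$th digit when $n$ and $r$ are added in base $p$, this sum counts the carries at digits $\ge q-1$, i.e.\ equals $e_{q-1}$. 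With the lemma corrected and this computation supplied, your outline becomes a complete proof along Granville's lines; without them it is a plan, not a proof.
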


\begin{lemma}\label{snL}
For $n_1=2k_1>0$ and $n_2=2k_2+1>0$, we have
\[
  s_{n_1+n_2}\bigl[\widetilde L(n_1,n_2)\bigr]=
  -\bin{n_1+n_2}1+\bin{n_1+n_2}2-\cdots-\bin{n_1+n_2}{n_1-1}+\bin{n_1+n_2}{n_1}.
\]
\end{lemma}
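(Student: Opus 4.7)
The plan is to compute $s_{n_1+n_2}[\widetilde L(n_1,n_2)]$ by reading off the Chern roots from the total Chern class formula (\ref{chernL}) and then reducing in the cohomology ring~(\ref{cohomproj2}). Factoring $(1-u^2)=(1+(-u))(1+u)$ and $(1-v^2)=(1+v)(1+(-v))$, the formula (\ref{chernL}) tells us that the Chern roots of the stable tangent bundle of $\widetilde L(n_1,n_2)$ are: $-u$ with multiplicity $k_1$, $u$ with multiplicity $k_1+1$, $v-u$ once, $v$ with multiplicity $k_2$, and $-v$ with multiplicity $k_2+1$. Setting $N=n_1+n_2=2k_1+2k_2+1$ and using that $N$ is odd, the terms from $(1-u^2)^{k_1}$ and $(1-v^2)^{k_2}$ cancel in pairs, giving
\[
  s_N\bigl(\widetilde L(n_1,n_2)\bigr) = u^N + (v-u)^N - v^N.
\]

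Next I would reduce this cohomology class modulo the relations $u^{n_1+1}=0$ and $v^{n_2+1}=uv^{n_2}$. The first relation immediately kills $u^N$ (since $N>n_1$), and iterating the second gives $v^{n_2+j}=u^j v^{n_2}$ for $0\le j\le n_1$, so $v^N=u^{n_1}v^{n_2}$. Expanding the middle term by the binomial theorem,
\[
  (v-u)^N = \sum_{k=0}^{N}(-1)^k\bin{N}{k}u^k v^{N-k},
\]
only the terms with $0\le k\le n_1$ survive, and each such term equals $(-1)^k\bin{N}{k}u^{n_1}v^{n_2}$ after using $v^{N-k}=u^{n_1-k}v^{n_2}$.

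Finally, evaluating at $\langle L(n_1,n_2)\rangle$ via $u^{n_1}v^{n_2}\langle L(n_1,n_2)\rangle =1$ yields
\[
  s_N[\widetilde L(n_1,n_2)] = \sum_{k=0}^{n_1}(-1)^k\bin{N}{k}\;-\;1.
\]
Since $n_1=2k_1$ is even the constant term $\bin N0=1$ cancels with the $-1$, and grouping the remaining terms gives exactly the alternating sum claimed in the lemma. There is no real obstacle here — the only subtle point is making sure the parity of $N$ is used correctly to collapse the $(1-u^2)^{k_1}$ and $(1-v^2)^{k_2}$ contributions, and keeping track of the sign pattern at the end; everything else is a direct bookkeeping exercise from (\ref{chernL}) and (\ref{cohomproj2}).
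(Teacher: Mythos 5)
Your proposal is correct and follows essentially the same route as the paper: read the stable Chern roots off \eqref{chernL}, use the oddness of $n_1+n_2$ to reduce $s_{n_1+n_2}$ to $(v-u)^{n_1+n_2}-v^{n_1+n_2}$ (the $u$-power terms dying by $u^{n_1+1}=0$), then reduce modulo the relations of \eqref{cohomproj2} and evaluate on the fundamental class. The only nitpick is that the cancellation of $\bin{n_1+n_2}{0}$ against the $-1$ has nothing to do with the parity of $n_1$; that parity only fixes the sign $+\bin{n_1+n_2}{n_1}$ of the last term.
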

\begin{proof}
Using~\eqref{chernL} and~\eqref{cohomproj2} we calculate
\begin{align*}
  s_{n_1+n_2}\bigl(\widetilde L(n_1,n_2)\bigr)
  &=(v-u)^{n_1+n_2}+(k_2+1)(-1)^{n_1+n_2}v^{n_1+n_2}+k_2v^{n_1+n_2}\\
  &=(v-u)^{n_1+n_2}-v^{n_1+n_2}\\
  &=\Bigl(-\bin{n_1+n_2}1+\bin{n_1+n_2}2-\cdots
  -\bin{n_1+n_2}{n_1-1}+\bin{n_1+n_2}{n_1}\Bigr)u^{n_1}v^{n_2},
\end{align*}
and the result follows by evaluating at $\langle\widetilde
L(n_1,n_2)\rangle$.
\end{proof}

Note that $s_3(\widetilde L(2,1))=0$ in accordance with
Theorem~\ref{lowdimqt}. On the other hand, $s_{2+n_2}(\widetilde
L(2,n_2))\ne0$ for $n_2>1$, providing an example of a
non-bounding quasitoric $SU$-manifold in each dimension $4k+2$
with $k>1$.

\begin{lemma}\label{yL}
For $k>1$, there is a linear combination $y_{2k+1}$ of
$SU$-bordism classes $[\widetilde L(n_1,n_2)]$ with $n_1+n_2=2k+1$
such that $s_{2k+1}(y_{2k+1})=m_{2k+1}m_{2k}$.
\end{lemma}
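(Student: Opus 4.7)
The plan is to pin down $s_{2k+1}[\widetilde L(2j,2k+1-2j)]$ in closed form and then argue by B\'ezout. Applying Lemma~\ref{snL} with $n_1 = 2j$ and $n_2 = 2k+1-2j$, and telescoping via the standard identity $\sum_{i=0}^{n_1}(-1)^i\binom{n_1+n_2}{i} = (-1)^{n_1}\binom{n_1+n_2-1}{n_1}$ (valid since $n_1$ is even), one obtains
\[
  s_{2k+1}\bigl[\widetilde L(2j,2k+1-2j)\bigr] \;=\; \binom{2k}{2j}-1 \qquad (1 \le j \le k).
\]
Hence the characteristic number of any integer combination $y_{2k+1} = \sum_j \alpha_j[\widetilde L(2j,2k+1-2j)]$ ranges over $d\,\mathbb{Z}$, where $d = \gcd\bigl\{\binom{2k}{2j}-1 : 1 \le j \le k-1\bigr\}$ (the $j=k$ term vanishes). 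The lemma is thus equivalent to showing $d \mid m_{2k+1}m_{2k}$, which I verify one prime at a time.

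Suppose first that $p \nmid m_{2k+1}m_{2k}$; I must produce $j$ with $p \nmid \binom{2k}{2j}-1$. For $p$ odd with $p \nmid 2k+1$, the factorization $\binom{2k}{2}-1 = (2k+1)(k-1)$ handles the case $p \nmid k-1$; otherwise $p \mid k-1$ forces $p \ge 5$ (since $3 \mid k-1$ combined with $3 \nmid 2k+1$ is a contradiction modulo~$3$), and then the factorization $\binom{2k}{4}-1 = \frac{1}{6}(k-2)(2k+1)(2k^2-3k+3)$ is prime to $p$ at $j=2$ on reducing each factor modulo~$p$ using $k \equiv 1$. For $p = 2$ with $k+1$ not a power of $2$, Lucas' theorem places $2^{|S|}$ odd binomials $\binom{2k}{i}$ at even positions (where $S$ is the set of positions of $1$'s in the binary expansion of $2k$), and an elementary count shows $k+1 > 2^{|S|}$ precisely when $k+1$ is not a $2$-power, producing some even $\binom{2k}{2j}$.

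Now suppose $p \mid m_{2k+1}m_{2k}$; I need $v_p\bigl(\binom{2k}{2j}-1\bigr) = 1$ for some $j$. For $p = 2$ with $k+1 = 2^s$, $s \ge 2$: the expansion $\binom{2k}{2}-1 = (2^{s+1}-1)\cdot 2\cdot (2^{s-1}-1)$ has $v_2 = 1$. For $p$ odd with $2k+1 = p$: $\binom{2k}{2}-1 = p(k-1)$ with $0 < k-1 < p$ gives $v_p = 1$. The critical case is $p$ odd with $2k+1 = p^s$ and $s \ge 2$, where Lucas' theorem is useless because $\binom{p^s-1}{2j} \equiv 1 \pmod p$ for every~$2j$ (use $\binom{p-1}{c} \equiv (-1)^c \pmod p$ together with the fact that for odd $p$ every even number has even base-$p$ digit sum). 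Here I would take $2j = p^{s-1}+1$ and apply Theorem~\ref{granth} with $q = 2$ to $\binom{p^s-1}{p^{s-1}+1}$. A direct bookkeeping of the digit data $N_i$, $M_i$, $R_i$ shows that all but two of the factors $\tfrac{N_i!_p}{M_i!_p\,R_i!_p}$ collapse trivially to $1$, and the remaining collapse around position $s-2$ uses the elementary identity $\prod_{i=1}^{p-1}(i-p) = (p-1)!$ to give $(p^2-1)!_p/(p^2-p-1)!_p \equiv (p-1)! \pmod{p^2}$. The total product is $\equiv 1-p \pmod{p^2}$, so $\binom{p^s-1}{p^{s-1}+1}-1 \equiv -p \pmod{p^2}$, yielding $v_p = 1$ as required.

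The main obstacle is precisely this last case: since every $\binom{2k}{2j} \equiv 1 \pmod p$ when $2k+1$ is a prime power, one cannot see things modulo~$p$ and must work modulo~$p^2$. Identifying $2j = p^{s-1}+1$ as the right exponent and shepherding Granville's formula through its collapsing factors is the heart of the argument; the remaining cases are elementary manipulations of $\binom{2k}{2}$ and $\binom{2k}{4}$ together with a Lucas-theorem count.
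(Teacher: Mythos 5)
Your reduction is sound: $s_{2k+1}[\widetilde L(2j,2k+1-2j)]=\binom{2k}{2j}-1$ does follow from Lemma~\ref{snL} by the standard partial alternating-sum identity, the achievable characteristic numbers form $d\,\Z$ with $d=\gcd\{\binom{2k}{2j}-1\colon 1\le j\le k-1\}$, and the lemma amounts to $d\mid m_{2k+1}m_{2k}$ (this is the same reduction as in the paper, which works with the consecutive differences $\binom{2k+1}{2i}-\binom{2k+1}{2i-1}$; the two gcds agree by telescoping). Your treatment of the prime-power cases is also essentially correct: for $2k+2=2^s$ and $2k+1=p$ the explicit factorisations give valuation $1$, and for $2k+1=p^s$, $s\ge2$, the congruence $\binom{p^s-1}{p^{s-1}+1}\equiv 1-p \pmod{p^2}$ is true (it can be checked via Granville's theorem as you suggest, or more quickly from $\binom{p^s-1}{p^{s-1}}\equiv p-1\pmod{p^2}$, which is the computation the paper carries out in Lemma~\ref{Nmodp}), so that term has $p$-adic valuation exactly~$1$.

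The genuine gap is in your first case, for odd primes. You only treat odd $p$ with $p\nmid 2k+1$, but $p\nmid m_{2k+1}m_{2k}$ means $2k+1\ne p^s$, which allows $p\mid 2k+1$ with $2k+1$ not a $p$-power, and there your argument breaks down: both of your candidate values contain the factor $2k+1$, since $\binom{2k}{2}-1=(2k+1)(k-1)$ and $\binom{2k}{4}-1=\tfrac16(k-2)(2k+1)(2k^2-3k+3)$, so for $p\ge5$ dividing $2k+1$ they are both $\equiv0\pmod p$. Concretely, for $2k+1=15$ and $p=5$ you have $\binom{14}{2}-1=90$ and $\binom{14}{4}-1=1000$, both divisible by $5$; the first good index is $j=3$, where $\binom{14}{6}-1=3002\equiv2\pmod 5$. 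No fixed choice of $j$ works uniformly here; one needs an index built from the base-$p$ digits of $2k+1$ (equivalently $2k$), which is exactly what the paper does in Case~2 of Lemma~\ref{gcddif} (the subcase $n_0=0$ of the last digit), using Lucas' theorem to exhibit $2i$ with $\binom{2k+1}{2i}\equiv n_l$ and $\binom{2k+1}{2i-1}\equiv0\pmod p$. Until you supply such a digit-based construction for this subcase, your proof of $d\mid m_{2k+1}m_{2k}$ is incomplete.
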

\begin{proof}
By the previous lemma,
\[
  s_{n_1+n_2}\bigl[\widetilde L(n_1,n_2)-\widetilde L(n_1-2,n_2+2)\bigr]
  =\bin{n_1+n_2}{n_1}-\bin{n_1+n_2}{n_1-1}.
\]
The result follows from the next lemma.
\end{proof}

\begin{lemma}\label{gcddif}
For any integer $k>1$, we have
\[
  \gcd\bigl\{\bin{2k+1}{2i}-\bin{2k+1}{2i-1},\;0<i\le k\bigr\}
  =m_{2k+1}m_{2k}.
\]
\end{lemma}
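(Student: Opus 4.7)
The plan is to verify the identity $\gcd_i D_i = m_{2k+1}m_{2k}$ prime-by-prime, where $D_i = \binom{2k+1}{2i} - \binom{2k+1}{2i-1}$. The starting observation is that $D_i$ is the coefficient of $x^{2i}$ in the polynomial $(1-x)(1+x)^{2k+1}$, which makes modular reductions of this polynomial very convenient. Recalling that $v_p(m_{2k+1}m_{2k})$ equals $1$ precisely when either $p=2$ and $2k+2=2^s$, or $p$ is odd and $2k+1=p^s$, and vanishes otherwise, the proof splits naturally along these lines.

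For $p=2$, the identity $-1\equiv 1\pmod 2$ collapses the polynomial to $(1+x)^{2k+2}$, so $D_i \equiv \binom{2k+2}{2i}\pmod 2$. Since $2k+2$ is even, all of its nontrivial binary submasks are themselves even, and Theorem~\ref{lucas} (together with Proposition~\ref{gcdbinom}) guarantees that such a submask exists in $[2,2k]$ iff $2k+2\ne 2^s$. In the exceptional case $2k+2=2^s$ (forcing $s\ge 3$ since $k>1$), every $D_i$ is even, and the factorization $D_1 = (k-1)(2k+1) = 2(2^{s-2}-1)(2^s-1)$ pins down $v_2(D_1) = 1$.

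For $p$ odd, write $2k+1 = p^\alpha q$ with $\gcd(p,q) = 1$ (so $q$ is odd). When $\alpha\ge 1$, the identity $(1+x)^{p^\alpha}\equiv 1+x^{p^\alpha}\pmod p$ yields
\[
  (1-x)(1+x)^{2k+1} \equiv \sum_{r=0}^{q} \binom{q}{r}\bigl(x^{p^\alpha r} - x^{p^\alpha r+1}\bigr)\pmod p.
\]
If $2k+1\ne p^s$ then $q\ge 3$, and the coefficient of $x^{p^\alpha+1}$ is $-q\not\equiv 0\pmod p$; since $p^\alpha+1$ is even and lies in $[2,2k]$, this exhibits $i=(p^\alpha+1)/2$ with $p\nmid D_i$. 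If $2k+1=p^s$, the expression collapses to $1-x+x^{p^s}-x^{p^s+1}$, whose nonzero terms sit outside the even range $\{2,4,\ldots,2k\}$, so $p\mid D_i$ for every $i$; the exact equality $\min_i v_p(D_i)=1$ then follows from $v_p\bigl(\binom{p^s}{a}\bigr) = s - v_p(a)$ (Kummer, or Theorem~\ref{granth}), applied at $2i = p^{s-1}+1$. The remaining case $\alpha=0$ (i.e., $p\nmid 2k+1$, which automatically forces $2k+1\ne p^s$) I would handle by direct Lucas analysis: letting $d_0$ denote the base-$p$ units digit of $2k+1$, a short calculation gives $D_1\equiv d_0(d_0-3)/2\pmod p$, nonzero unless $d_0=3$; in that sub-case (which requires $p\ge 5$) Theorem~\ref{lucas} yields $D_2\equiv 0-1\equiv -1\pmod p$.

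Combining these local computations gives $v_p(\gcd_i D_i) = v_p(m_{2k+1}m_{2k})$ for every prime $p$, proving the lemma. The main obstacle is the odd-$p$, $2k+1\ne p^s$ case: the Freshman's-Dream coefficient extraction works cleanly only when $\alpha\ge 1$ and requires a separate digit-by-digit Lucas argument for $\alpha=0$, with an awkward sub-case at $d_0=3$ where $D_1$ vanishes mod $p$ and one must pass to $D_2$. One must also verify that the chosen indices always lie in $[1,k]$, which is where the hypothesis $k>1$ is used.
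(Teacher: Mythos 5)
Your proposal is correct in outline and follows the same prime-by-prime skeleton as the paper, but parts of it take a genuinely different route. The $p=2$ analysis (the reduction $D_i\equiv\binom{2k+2}{2i}\bmod 2$, the even-submask observation, and the explicit value $D_1=(k-1)(2k+1)=2(2^{s-2}-1)(2^s-1)$ when $2k+2=2^s$) coincides with the paper's argument. For odd $p$ with $2k+1\ne p^s$ the paper runs a digit-by-digit Lucas analysis split on the units digit $n_0\in\{0,1,\ge 2\}$ of $2k+1$, each sub-case with its own witness index; you instead split on $v_p(2k+1)$: when $p\mid 2k+1$ your Freshman's-Dream reduction of $(1-x)(1+x)^{2k+1}$ exhibits the witness $2i=p^{\alpha}+1$ with coefficient $-q\not\equiv0\bmod p$ in one stroke, and when $p\nmid 2k+1$ your direct evaluation $D_1\equiv d_0(d_0-3)/2$, falling back to $D_2\equiv-1$ when $d_0=3$, is shorter than the paper's three sub-cases. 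Your range checks ($p^{\alpha}+1\le 2k$ because $q\ge3$, and $i=2\le k$ because $k>1$) are the right ones and they hold. This buys a cleaner treatment of the hardest case at the cost of a small extra case split.

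One step needs patching. In the case $2k+1=p^s$ you deduce $\min_i v_p(D_i)=1$ from $v_p\binom{p^s}{a}=s-v_p(a)$ applied at $2i=p^{s-1}+1$. For $s\ge2$ this works: $\binom{p^s}{p^{s-1}+1}$ and $\binom{p^s}{p^{s-1}}$ have valuations $s$ and $1$, which are distinct, so their difference has valuation exactly $1$. But for $s=1$ (i.e.\ $2k+1=p$ prime, a frequent case) both $\binom{p}{2}$ and $\binom{p}{1}$ have valuation exactly $1$, and the ultrametric inequality then gives only $v_p(D_1)\ge1$; the valuation formula alone cannot rule out extra divisibility (indeed for $p=3$ one has $\binom32-\binom31=0$). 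You need the explicit value $D_1=p(p-3)/2$ --- immediate from your own formula $D_1=(k-1)(2k+1)$ --- together with $p\ge5$, which follows from $k>1$, to conclude $v_p(D_1)=1$. The paper sidesteps this by writing $D_i=\frac{2k-4i+2}{2i}\binom{2k+1}{2i-1}$ and checking that the fractional factor is a $p$-adic unit, which covers $s=1$ and $s\ge2$ uniformly. With that one-line fix your argument is complete.
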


\begin{remark}
This result has been obtained independently in a recent work of
Buchstaber and Ustinov on the coefficient rings of universal
formal group laws~\cite[\S9]{bu-us15}.
\end{remark}

\begin{proof}[Proof of Lemma~\ref{gcddif}]
We need to establish the following two facts:
\begin{itemize}
\item[(a)] The largest power of $2$ which divides each number
$\bin{2k+1}{2i}-\bin{2k+1}{2i-1}$ with $0<i\le k$ is $2$ if
$2k+2=2^s$ and is $1$ otherwise.
\item[(b)] The largest power of odd prime $p$ which divides each
number $\bin{2k+1}{2i}-\bin{2k+1}{2i-1}$ with $0<i\le k$ is $p$ if
$2k+1=p^s$ and is $1$ otherwise.
\end{itemize}
We prove (a) first.

\smallskip

\noindent\textbf{Case 1} ($2k+2=2^s$) Then $s>2$, as $k>1$. For
$0<i\le k$, we have
\[
  \bin{2k+1}{2i}-\bin{2k+1}{2i-1}\equiv\bin{2^s-1}{2i}+\bin{2^s-1}{2i-1}
  =\bin{2^s}{2i}\equiv0\mod2
\]
by Proposition~\ref{gcdbinom}. On the other hand,
\[
  \bin{2^s-1}{2}-\bin{2^s-1}{1}=(2^s-1)(2^{s-1}-1-1)
  =2(2^s-1)(2^{s-2}-1)\not\equiv 0\mod4.
\]

\smallskip

\noindent\textbf{Case 2} ($2k+2\ne2^s$) Write the base $2$
expansion
\[
  2k+2=n_12+\cdots+n_{l-1}2^{l-1}+2^l
\]
with $n_i=1$ or $0$. Set $2i=n_12+\cdots+n_{l-1}2^{l-1}$. We have
$2i\ne0$, as otherwise $2k+2=2^l$. Then
$\bin{2k+1}{2i}-\bin{2k+1}{2i-1}\equiv\bin{2k+2}{2i}\equiv1\mod2$
by Theorem~\ref{lucas}.

\smallskip

\noindent Now we prove~(b).

\smallskip

\noindent\textbf{Case 1} ($2k+1=p^s$) Then
$\bin{2k+1}{2i}-\bin{2k+1}{2i-1}\equiv 0\mod p$ for $0<i\le k$ by
Proposition~\ref{gcdbinom}. On the other hand, setting
$2i=p^{s-1}+1$, we get
\[
  \bin{2k+1}{2i}-\bin{2k+1}{2i-1}
  ={\textstyle\frac{2k-4i+2}{2i}}\bin{2k+1}{2i-1}
  ={\textstyle\frac{p^s-2p^{s-1}-1}{p^{s-1}+1}}\bin{p^s}{p^{s-1}}
  \not\equiv0\mod p^2.
\]
This follows from the fact that $p^s-2p^{s-1}-1>0$ as $k>1$, and
$\bin{p^s}{p^{s-1}}$ is not divisible by $p^2$ by Kummer's
theorem.

\smallskip

\noindent\textbf{Case 2} ($2k+1\ne p^s$) Write the base $p$
expansion
\[
  2k+1=n_0+n_1p+\cdots+n_{l-1}p^{l-1}+n_l p^l
\]
with $0\le n_i\le p-1$ (for $i=0,\ldots,l$) and $n_l>0$.

Assume that $n_0>1$. Then we set
\[
  2i=n_0+n_1p+\cdots+n_{l-1}p^{l-1}+(n_l-1)p^l.
\]
We have $\bin{2k+1}{2i}\equiv n_l\mod p$ by Theorem~\ref{lucas}.
Also,
\[
  2i-1=(n_0-1)+n_1p+\cdots+n_{l-1}p^{l-1}+(n_l-1)p^l>0
\]
and $\bin{2k+1}{2i-1}\equiv n_ln_0\mod p$. Therefore,
$\bin{2k+1}{2i}-\bin{2k+1}{2i-1}\equiv n_l(1-n_0)\not\equiv0\mod
p$.

Assume that $n_0=1$. Then we set $2i=2k$. We have
$\bin{2k+1}{2k}=2k+1\equiv1\mod p$ and
$\bin{2k+1}{2k-1}=k(2k+1)\equiv0\mod p$, so that
$\bin{2k+1}{2k}-\bin{2k+1}{2k-1}\not\equiv0\mod p$.

Finally, assume that $n_0=0$. Then we set
\[
  2i=n_0+n_1p+\cdots+n_{l-1}p^{l-1}+(n_l-1)p^l=
  n_qp^q+\cdots+n_{l-1}p^{l-1}+(n_l-1)p^l,
\]
where $q>0$ and $n_q>0$. We have $2i>0$, as otherwise $2k+1=p^l$.
Then $\bin{2k+1}{2i}\equiv n_l\mod p$. Also,
\[
  2i-1=(p-1)+(p-1)p+\cdots+(p-1)p^{q-1}+(n_q-1)p^q+\cdots+n_{l-1}p^{l-1}+(n_l-1)p^l,
\]
and $\bin{2k+1}{2i-1}\equiv0\mod p$ by Theorem~\ref{lucas}.
Therefore, $\bin{2k+1}{2i}-\bin{2k+1}{2i-1}\not\equiv0\mod p$.
\end{proof}

Now we turn our attention to the manifolds $\widetilde N(n_1,n_2)$
from Construction~\ref{constrN}.

\begin{lemma}\label{snN}
For $n_1=2k_1>0$ and $n_2=2k_2+1>0$, set $n=n_1+n_2+1$, so that
$\dim\widetilde N(n_1,n_2)=2n=4(k_1+k_2+1)$. Then
\[
  s_n\bigl[\widetilde N(n_1,n_2)\bigr]=2\bigl(-\bin{n}1+\bin{n}2-\cdots
  -\bin{n}{n_1-1}+\bin{n}{n_1}-n_1\bigr).
\]
\end{lemma}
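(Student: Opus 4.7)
The plan is to read off the Chern roots of $\widetilde N(n_1,n_2)$ from the factored form of the total Chern class in~\eqref{chernN}, use them to write $s_n$ as a sum of $n$th powers, and then reduce modulo the ideal in~\eqref{cohomN} before evaluating on the fundamental class.

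From~\eqref{chernN} the Chern roots are $\pm v$ each with multiplicity $k_1$ together with one extra $v$, the pair $\pm(w-u)$, a single $-(v+w)$, the pair $\pm w$ each with multiplicity $k_2-1$, and one extra $w$. Because $n=2(k_1+k_2+1)$ is even, each $\pm$ pair combines into doubled $n$th powers and $(-(v+w))^n=(v+w)^n$, so
\[
s_n=(n_1+1)v^n+2(w-u)^n+(v+w)^n+(n_2-2)w^n.
\]
The first summand is killed by $v^{n_1+1}=0$, and $u^2=0$ gives $(w-u)^n=w^n-n\,u\,w^{n-1}$; collecting terms and truncating the expansion of $(v+w)^n$ at $k=n_1$ (again by $v^{n_1+1}=0$) leaves
\[
s_n=n_2w^n-2n\,u\,w^{n-1}+\sum_{k=0}^{n_1}\binom{n}{k}v^kw^{n-k}.
\]

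The remaining three types of top-degree integrals I would compute via the projection $\pi\colon\widetilde N(n_1,n_2)\to\C P^1\times\C P^{n_1}$. Reading~\eqref{cohomN}, $\widetilde N(n_1,n_2)$ is $\C P(\xi)$ for a rank $n_2+1$ bundle whose Chern roots are $u,u,-v,0,\ldots,0$, and iterating the relation $w^{n_2+1}=(2u-v)w^{n_2}+2uv\,w^{n_2-1}$ yields the closed form $\pi_*w^{n_2+k}=(-1)^k(v^k-2uv^{k-1})$ after using $u^2=0$. Since $n_1$ is even and $v^{n_1+1}=0$, this gives $\langle w^n,[\widetilde N(n_1,n_2)]\rangle=2$, $\langle uw^{n-1},[\widetilde N(n_1,n_2)]\rangle=1$, and $\langle v^kw^{n-k},[\widetilde N(n_1,n_2)]\rangle=2(-1)^k$ for $0\le k\le n_1$. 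Substituting these into the expression above and using $n_2-n=-(n_1+1)$ should collapse the sum into the claimed formula.

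The main risk is sign bookkeeping: the parities of $n$, $n_1$ and $n_2$ all enter, and it would be easy to flip a sign when passing between Chern roots and the complete symmetric functions arising as pushforwards. I would therefore double-check the first few values of $\pi_*w^{n_2+k}$ directly against the defining relation before applying the closed form to the terms of degree $n-1$ and $n$.
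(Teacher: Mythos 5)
Your proposal is correct and follows essentially the same route as the paper: expand $s_n$ from the linear factors of~\eqref{chernN}, kill $v^n$ and simplify $(w-u)^n$ via $u^2=0$, $v^{n_1+1}=0$, and then evaluate by iterating the relation $w^{n_2+1}=2uw^{n_2}-vw^{n_2}+2uvw^{n_2-1}$ from~\eqref{cohomN}. Your closed form $\pi_*w^{n_2+k}=(-1)^k(v^k-2uv^{k-1})$ is just a repackaging of the paper's identities $uw^{n-1}=uv^{n_1}w^{n_2}$ and $v^jw^{n-j}=(-1)^j2uv^{n_1}w^{n_2}$, and the resulting evaluations $\langle w^n\rangle=2$, $\langle uw^{n-1}\rangle=1$, $\langle v^kw^{n-k}\rangle=2(-1)^k$ do collapse, via $n_2-n=-(n_1+1)$, to the stated formula.
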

\begin{proof}
Using~\eqref{chernN} and \eqref{cohomN} we calculate
\begin{multline}\label{snN1}
  s_{n}\bigl(\widetilde N(n_1,n_2)\bigr)
  =2(w-u)^n+(v+w)^n+(2k_2-1)w^n\\
  =2w^n-2nuw^{n-1}+w^n+\bin n1vw^{n-1}+\cdots+\bin n{2k_1}v^{2k_1}w^{2k_2+2}+(2k_2-1)w^n\\
  =-2nuw^{n-1}+(n-n_1)w^n+\bin n1vw^{n-1}+\cdots+\bin
  n{n_1}v^{n_1}w^{n-n_1}.
\end{multline}
Now we have to express each monomial above via $uv^{n_1}w^{n_2}$
using the identities in~\eqref{cohomN}, namely
\begin{equation}\label{3ident}
  u^2=0,\quad v^{n_1+1}=0,\quad
  w^{n_2+1}=2uw^{n_2}-vw^{n_2}+2uvw^{n_2-1}.
\end{equation}
We have
\begin{multline}\label{uwn-1}
  uw^{n-1}=uw^{n_1-1}w^{n_2+1}=uw^{n_1-1}(2uw^{n_2}-vw^{n_2}+2uvw^{n_2-1})
  \\=-uvw^{n-2}=\cdots=(-1)^juv^jw^{n-j-1}=\cdots=uv^{n_1}w^{n_2}.
\end{multline}
Also, we show that
\begin{equation}\label{vjwn-j}
  v^jw^{n-j}=(-1)^j2uv^{n_1}w^{n_2},\quad 0\le j\le n_1,
\end{equation}
by verifying the identity successively for $j=n_1,n_1-1,\ldots,0$.
Indeed, $v^{n_1}w^{n-n_1}=v^{n_1}w^{n_2+1}=2uv^{n_1}w^{n_2}$
by~\eqref{3ident}. Now, we have
\begin{multline*}
  v^{j-1}w^{n-j+1}=v^{j-1}w^{n_1+1-j}w^{n_2+1}
  =v^{j-1}w^{n_1+1-j}(2uw^{n_2}-vw^{n_2}+2uvw^{n_2-1})\\
  =2uv^{j-1}w^{n-j}-v^jw^{n-j}+2uv^jw^{n-1-j}=-v^jw^{n-j},
\end{multline*}
where the last identity holds because of~\eqref{uwn-1}. The
identity~\eqref{vjwn-j} is therefore verified completely.
Plugging~\eqref{uwn-1} and~\eqref{vjwn-j} into~\eqref{snN1} we
obtain
\[
  s_{n}\bigl(\widetilde N(n_1,n_2)\bigr)=\bigl(-2n+2(n-n_1)
  -2\bin{n}1+2\bin{n}2-\cdots
  -2\bin{n}{n_1-1}+2\bin{n}{n_1}\bigr)uv^{n_1}w^{n_2}.
\]
The result follows by evaluating at $\langle\widetilde
N(n_1,n_2)\rangle$.
\end{proof}

Note that $s_4(\widetilde N(2,1))=0$ in accordance with
Theorem~\ref{lowdimqt}. On the other hand, $s_n(\widetilde
N(2,n_2))=n^2-3n-4>0$ for $n>4$, providing an example of a
non-bounding quasitoric $SU$-manifold in each dimension $4k$
with $k>2$. This includes a 12-dimensional quasitoric
$SU$-manifold $\widetilde N(2,3)$, which was missing
in~\cite{lu-wa}.

\begin{lemma}\label{yN}
For $k>2$, there is a linear combination $y_{2k}$ of $SU$-bordism
classes $[\widetilde N(n_1,n_2)]$ with $n_1+n_2+1=2k$ such that
$s_{2k}(y_{2k})=2m_{2k}m_{2k-1}$.
\end{lemma}
\begin{proof}
The result follows from Lemma~\ref{snN} and
Lemmata~\ref{Nmod2},~\ref{Nmodp} below.
\end{proof}

\begin{lemma}\label{Nmod2}
For $k>2$, the largest power of $2$ which divides each number
\[
  a_i=-\bin{2k}1+\bin{2k}2-\cdots
  -\bin{2k}{2i-1}+\bin{2k}{2i}-2i,\quad 0<i< k,
\]
is $2$ if $2k=2^s$ and is $1$ otherwise.
\end{lemma}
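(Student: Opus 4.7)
The plan is to first collapse the alternating sum in $a_i$ to a single binomial coefficient. Using the standard identity $\sum_{j=0}^{m}(-1)^j\bin{n}{j}=(-1)^m\bin{n-1}{m}$ with $n=2k$ and $m=2i$, we get $\sum_{j=1}^{2i}(-1)^j\bin{2k}{j}=\bin{2k-1}{2i}-1$, so that
\[
  a_i=\bin{2k-1}{2i}-(2i+1).
\]
Since $2i+1$ is odd, $a_i$ is even precisely when $\bin{2k-1}{2i}$ is odd, and the 2-adic valuation of $a_i$ is $0$ otherwise. This reduces the problem to understanding the parities of the $\bin{2k-1}{2i}$ via Lucas' Theorem~\ref{lucas}.

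Case 1: $2k=2^s$. Since $k>2$, we have $s\ge3$. Then $2k-1=2^s-1$ has binary expansion consisting entirely of $1$'s, so by Lucas, $\bin{2^s-1}{2i}$ is odd for all $0<i<k$; hence every $a_i$ is even. To pin the $2$-adic gcd down to exactly $2$, I would exhibit $i=1$: direct computation gives $a_1=(2^s-1)(2^{s-1}-1)-3$, and using $s\ge3$ both factors are congruent to $3\bmod 4$, so $a_1\equiv 9-3\equiv 2\pmod 4$ and $v_2(a_1)=1$.

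Case 2: $2k$ is not a power of $2$. Write $2k=2^{b_1}+\cdots+2^{b_r}$ with $b_1>\cdots>b_r\ge1$ and $r\ge2$. Subtracting one introduces a borrow at position $b_r$, so the binary expansion of $2k-1$ has a $0$ in position $b_r$ (and retains $1$'s in positions $b_1,\ldots,b_{r-1}$). Take $i=2^{b_r-1}$, so $2i=2^{b_r}$. Then $0<i<k$ because $r\ge2$ ensures $2^{b_r}<2k$, and Lucas' Theorem~\ref{lucas} forces $\bin{2k-1}{2^{b_r}}$ to be even. Hence $a_i$ is odd for this particular index, so the $2$-part of $\gcd\{a_i\}$ equals~$1$.

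The main obstacle I anticipate is the fine 2-adic bookkeeping in Case 1: it is easy to conclude divisibility by $2$, but showing that the common factor is exactly $2$ and not $4$ requires the specific input $s\ge3$ so that both $2^s-1$ and $2^{s-1}-1$ are $\equiv 3\pmod 4$; the failure at $s=2$ is precisely why the hypothesis $k>2$ is needed. Case 2 should be essentially routine once the binary expansion of $2k-1$ has been correctly read off.
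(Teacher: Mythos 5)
Your proof is correct. It shares the paper's skeleton (the same case split on whether $2k$ is a power of $2$, Lucas' Theorem~\ref{lucas} for parities, and the explicit computation $a_1\equiv2\pmod4$ using $s\ge3$ to pin the gcd down in the power-of-two case), but the reduction is genuinely different: you first collapse the alternating sum via $\sum_{j=0}^{m}(-1)^j\bin{n}{j}=(-1)^m\bin{n-1}{m}$ to get the closed form $a_i=\bin{2k-1}{2i}-(2i+1)$, so that the parity of $a_i$ is controlled by the single coefficient $\bin{2k-1}{2i}$; Case 1 then follows because every binary digit of $2^s-1$ is $1$, and Case 2 follows because $2k-1$ has a zero digit at the position of the lowest set bit of $2k$, with your choice $2i=2^{b_r}$ satisfying $0<i<k$ as you checked. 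The paper instead works with the sum as written: in Case 1 it gets $2\mid a_i$ from Proposition~\ref{gcdbinom} applied to the $\bin{2^s}{j}$, and in Case 2 it telescopes, using $a_i-a_{i-1}\equiv\bin{2k}{2i}\pmod2$ (odd-index binomial coefficients of the even number $2k$ being even) and reusing from the proof of Lemma~\ref{gcddif} an index with $\bin{2k}{2i}$ odd. Your route buys a cleaner, self-contained criterion (parity of one binomial coefficient) at the cost of one extra standard identity; the paper's route buys the reuse of the binary-expansion argument already carried out for Lemma~\ref{gcddif}. Your mod-$4$ bookkeeping at $i=1$, including the observation that the hypothesis $k>2$ (equivalently $s\ge3$) is exactly what makes $2^{s-1}-1\equiv3\pmod4$, matches the paper's computation $a_1=-2-2^s+2^{s-1}(2^s-1)\not\equiv0\pmod4$.
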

\begin{proof}First assume that $2k=2^s$. Then $a_i\equiv0\mod
2$ by Proposition~\ref{gcdbinom}. On the other hand, we have
$a_1=-2^s+2^{s-1}(2^s-1)-2\not\equiv0\mod4$, because $s>2$.

Now assume that $2k\ne2^s$. We have
$a_i-a_{i-1}\equiv\bin{2k}{2i}\mod2$, so it is enough to find $i$
such that $\bin{2k}{2i}\ne0\mod2$. This was done in the proof of
Lemma~\ref{gcddif}.
\end{proof}

\begin{lemma}\label{Nmodp}
For $k>2$, the largest power of odd prime $p$ which divides each
\[
  a_i=-\bin{2k}1+\bin{2k}2+\cdots
  -\bin{2k}{2i-1}+\bin{2k}{2i}-2i,\quad 0<i< k,
\]
is $p$ if $2k+1=p^s$ and is $1$ otherwise.
\end{lemma}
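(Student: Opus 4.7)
My plan is to follow the template of Lemmata~\ref{gcddif} and~\ref{Nmod2}, adapted to odd primes. First, I would obtain a closed form for $a_i$ using the telescoping identity $\sum_{j=0}^{2i}(-1)^j\binom{2k}{j}=\binom{2k-1}{2i}$, giving
\[
a_i=\binom{2k-1}{2i}-(2i+1).
\]
All subsequent analysis reduces to understanding this expression modulo $p$ and modulo $p^2$.

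For the divisibility direction (if $2k+1=p^s$, then $p\mid a_i$), I would apply Lucas' Theorem~\ref{lucas} to $\binom{p^s-2}{2i}$, noting that the base-$p$ digits of $p^s-2$ are $(p-2,p-1,\ldots,p-1)$. If the last digit $(2i)_0$ equals $p-1$, then $\binom{p-2}{p-1}=0$ forces $\binom{p^s-2}{2i}\equiv 0\pmod p$ and simultaneously $2i+1\equiv 0\pmod p$. If $(2i)_0\le p-2$, then the elementary congruences $\binom{p-2}{k}\equiv(-1)^k(k+1)\pmod p$ and $\binom{p-1}{k}\equiv(-1)^k\pmod p$, combined with $\sum_j(2i)_j\equiv 2i\equiv 0\pmod 2$ (as $p$ is odd), give $\binom{p^s-2}{2i}\equiv 2i+1\pmod p$. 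Either way $a_i\equiv 0\pmod p$.

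For sharpness I would exhibit $i_0=(p^{s-1}+1)/2$, an integer in $(0,k)$ for all $s\ge 1$, $k>2$. When $s=1$ this is $i_0=1$ and a direct calculation gives $a_1=p(p-5)/2$, which has $v_p=1$ since $k>2$ forces $p\ge 7$. When $s\ge 2$, factoring the unique multiple of $p$ out of both numerator and denominator of $\binom{p^s-2}{p^{s-1}+1}$ and using $p^s-i\equiv -i\pmod{p^2}$ for $p\nmid i$ yields the congruences $\binom{p^s-2}{p^{s-1}+1}\equiv 2-p\pmod{p^2}$ for $s=2$ and $\equiv 2-2p\pmod{p^2}$ for $s\ge 3$. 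Subtracting $2i_0+1$ (equal to $p+2$ for $s=2$, and $\equiv 2\pmod{p^2}$ for $s\ge 3$) gives uniformly $a_{i_0}\equiv -2p\pmod{p^2}$, so $v_p(a_{i_0})=1$. Alternatively, Granville's Theorem~\ref{granth} applied with $q=2$ produces the same result.

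For the converse (if $2k+1\ne p^s$, some $a_i$ is not divisible by $p$), I would argue by contradiction, assuming $\binom{2k-1}{2i}\equiv 2i+1\pmod p$ for every $0<i<k$. The symmetry $\binom{2k-1}{j}=\binom{2k-1}{2k-1-j}$ yields $\binom{2k-1}{j}\equiv 2k-j\pmod p$ for odd $j\in[1,2k-1]$, and Pascal's identity then gives $\binom{2k}{m}\equiv 2k+2\pmod p$ for even $m$ and $\binom{2k}{m}\equiv 2k\pmod p$ for odd $m$ in the allowable ranges. The case $m=0$ forces $p\mid 2k+1$, so $2k+1=p^tu$ with $t\ge 1$, $u>1$, $p\nmid u$. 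The base-$p$ expansion of $2k=p^tu-1$ has $(2k)_t=u_0-1$ where $u_0=u\bmod p\in\{1,\ldots,p-1\}$; thus Lucas applied to $\binom{2k}{p^t}$ (valid because $p^t\le 2k-2$) gives $\binom{2k}{p^t}\equiv u_0-1\pmod p$, whereas the derived constraint for the odd index $p^t$ forces $\binom{2k}{p^t}\equiv -1\pmod p$, so $u_0\equiv 0\pmod p$, a contradiction. The main obstacle is the sharpness step for $s\ge 2$: the index $i_0=(p^{s-1}+1)/2$ is non-obvious a priori and must be guessed from small-case experiments, and verifying $v_p(a_{i_0})=1$ requires careful expansion modulo $p^2$ keeping track of the single factor of $p$ that cancels between numerator and denominator.
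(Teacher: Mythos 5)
Your route is genuinely different from the paper's: instead of the pairing identity $a_i+a_{k-i-1}=\binom{2k}{2i+1}-2k$ that the paper uses to reduce everything to a single odd binomial coefficient, you work from the closed form $a_i=\binom{2k-1}{2i}-(2i+1)$, which is correct and makes the divisibility direction for $2k+1=p^s$ very clean (your Lucas computation with digits $(p-2,p-1,\dots,p-1)$ is right, and it even unifies the two subcases, since $(2i)_0=p-1$ gives $0\equiv0$ on both sides). The sharpness step at $2i_0=p^{s-1}+1$ is also correct: I checked that $\binom{p^s-2}{p^{s-1}+1}\equiv 2-p$ for $s=2$ and $\equiv 2-2p$ for $s\ge3$ modulo $p^2$, giving $a_{i_0}\equiv-2p$. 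Note, though, that the phrase ``the unique multiple of $p$'' is literally true only for $s=2$; for $s\ge3$ the numerator and the denominator each contain $p^{s-2}$ multiples of $p$, so there you really do need Granville's theorem (or a reduction to $\binom{p^s-1}{p^{s-1}}$), as you acknowledge.

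The genuine gap is in the converse. Your Pascal computation yields $\binom{2k}{m}\equiv 2k+2\pmod p$ only for even $m$ with $2\le m\le 2k-2$: at $m=0$ Pascal's identity reads $\binom{2k}{0}=\binom{2k-1}{0}+\binom{2k-1}{-1}=1+0$, and the odd-index congruence $\binom{2k-1}{j}\equiv 2k-j$ has no content at $j=-1$, so you cannot ``set $m=0$'' and conclude $1\equiv 2k+2$. The conclusion $p\mid 2k+1$ is nevertheless true under your hypothesis, but it requires an argument; for instance, $a_1=\binom{2k-1}{2}-3=(2k+1)(k-2)$ exactly, so $p\mid a_1$ forces $p\mid 2k+1$ or $p\mid k-2$, and in the latter case $2k-4\equiv0$ makes $\binom{2k-1}{4}\equiv0\pmod p$ for $p\ge7$ (a separate Lucas check handles $p=3$, while $p=5$ together with $p\mid k-2$ already gives $p\mid 2k+1$), contradicting $a_2\equiv0$, i.e. $\binom{2k-1}{4}\equiv5$. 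With that repair, your final step (Lucas at the odd index $p^t$, using $(2k)_t=u_0-1$ and $2k\equiv-1$) does produce the contradiction $u_0\equiv0\pmod p$, and the proof closes. The paper avoids the contradiction argument altogether by exhibiting an explicit odd index $2i+1$ with $\binom{2k}{2i+1}\not\equiv2k\pmod p$ through a case analysis on the last base-$p$ digit of $2k$.
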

\begin{proof}
Using the identity $2+\sum_{j=1}^{2k-1}(-1)^j\bin{2k}j=0$ we
obtain $a_{k-1}=0$ and
\begin{equation}\label{a+a}
  a_i+a_{k-i-1}=\bin{2k}{2i+1}-2k,\quad 0<i<k-1.
\end{equation}

\noindent\textbf{Case 1} ($2k+1=p^s$) We have
\[
  \bin{2k}{2i+1}=\bin{p^s-1}{2i+1}=\bin{p^s-1}{2i-1}
  \textstyle{\frac{(p^s-2i)(p^s-2i-1)}{2i(2i+1)}}\equiv-1\mod p
\]
by induction starting from $i=0$. Therefore,
\[
  a_i=a_{i-1}-\bin{2k}{2i-1}+\bin{2k}{2i}-2=a_{i-1}+
  {\textstyle\frac{p^s-4i}{2i}}\bin{p^s-1}{2i-1}-2\equiv0\mod p
\]
by induction starting from $a_0=0$. In view of~\eqref{a+a}, it
suffices to find $i$, where $0<i<k-1$, such that
$\bin{2k}{2i+1}-2k\not\equiv0\mod p^2$.

If $s=1$, then $p>5$ as $k>2$. We set $2i+1=3$, so that
\[
  \bin{2k}{2i+1}-2k=\bin{p-1}3-(p-1)={\textstyle\frac{p(p-1)(p-5)}6}
  \not\equiv0\mod p^2.
\]

Now assume that $s>1$. We set $2i+1=p^{s-1}$ and use
Theorem~\ref{granth} to calculate $\bin{2k}{2i+1}\mod p^2$. In the
notation of Theorem~\ref{granth}, we have $q=2$,
\begin{align*}
  &n=p^s-1=n_0+n_1p+\cdots+n_{s-2}p^{s-2}+n_{s-1}p^{s-1}\\
  &\qquad\qquad\ \;=(p-1)+(p-1)p+\cdots+(p-1)p^{s-2}+(p-1)p^{s-1},\\
  &\qquad N_0=\cdots=N_{s-2}=p^2-1,\quad N_{s-1}=p-1,\\[2pt]
  &m=p^{s-1}=m_0+m_1p+\cdots+m_{s-2}p^{s-2}+m_{s-1}p^{s-1}\\
  &\qquad M_0=\cdots=M_{s-3}=0,\quad M_{s-2}=p,\quad
    M_{s-1}=1,\\[2pt]
  &r=p^s-p^{s-1}-1=r_0+r_1p+\cdots+r_{s-2}p^{s-2}+r_{s-1}p^{s-1}\\
  &\qquad\qquad\qquad\quad\ \ =(p-1)+(p-1)p+\cdots+(p-1)p^{s-2}+(p-2)p^{s-1},\\
  &\qquad R_0=\cdots=R_{s-3}=p^2-1,\quad R_{s-2}=p^2-p-1,\quad
  R_{s-1}=p-2,
\end{align*}
and $e_0=e_1=0$. Therefore, Theorem~\ref{granth} gives
\[
  \bin{p^s-1}{p^{s-1}}\equiv{\textstyle\frac{(p^2-1)!_p}{p!_p\,(p^2-p-1)!_p}}
  \cdot{\textstyle\frac{(p-1)!_p}{1!_p\,(p-2)!_p}}=
  {\textstyle\frac{(p^2-1)\cdots(p^2-p+1)}{(p-1)!}}\cdot(p-1)\equiv p-1\mod
  p^2,
\]
and we obtain
\[
  \bin{2k}{2i+1}-2k=\bin{p^s-1}{p^{s-1}}-(p^s-1)\equiv p\mod p^2.
\]

\smallskip

\noindent\textbf{Case 2} ($2k+1\ne p^s$) In view of~\eqref{a+a},
it suffices to find $i$, where $0<i<k-1$, such that
$\bin{2k}{2i+1}-2k\not\equiv0\mod p$. Write the base $p$ expansion
\[
  2k=n_0+n_1p+\cdots+n_{l-1}p^{l-1}+n_l p^l
\]
with $0\le n_i\le p-1$ (for $i=0,\ldots,l$) and $n_l>0$. We have
$2k\equiv n_0\mod p$.

\smallskip

Assume that $n_0=0$. Then we set
\[
  2i+1=n_0+n_1p+\cdots+n_{l-1}p^{l-1}+(n_l-1) p^l.
\]
We have $\bin{2k}{2i+1}-2k\equiv n_l\not\equiv0\mod p$.

\smallskip

Assume that $0<n_0<p-2$. If $2k<p$, then $n_0=2k>5$. We set
$2i+1=3$, so that
\begin{equation}\label{3bino}
  \bin{2k}{2i+1}-2k\equiv\bin{n_0}3-n_0=\textstyle{\frac{n_0(n_0-4)(n_0+1)}6}
  \not\equiv0\mod p.
\end{equation}
If $2k>p$, then we set
\[
  2i+1=\begin{cases}n_0+1&\text{if $n_0$ is even},\\
                    n_0+2&\text{if $n_0$ is odd,}\end{cases}
\]
We have $2i+1<2k$ and $\bin{2k}{2i+1}-2k\equiv-n_0\not\equiv0\mod
p$.

\smallskip

Assume that $n_0=p-2$. If $p=3$, then $n_0=1$. We set $2i+1=5<2k$,
so that
$\bin{2k}{2i+1}-2k\equiv\bin{n_0}2\bin{n_1}1-1=-1\not\equiv0\mod
p$. If $p>3$, then we set $2i+1=3$, so that
$\bin{2k}{2i+1}-2k\not\equiv0\mod p$ by~\eqref{3bino}.


\smallskip

Assume that $n_0=p-1$ and $n_l<p-1$. Then we set
\[
  2i+1=n_0+n_1p+\cdots+n_{l-1}p^{l-1}+(n_l-1) p^l.
\]
We have $\bin{2k}{2i+1}-2k\equiv n_l-n_0\not\equiv0\mod p$.

\smallskip

Finally, assume that $n_0=p-1$ and $n_l=p-1$. As $2k\ne p^s-1$,
there exists $j$, where $0<j<l$, such that $n_j<p-1$. Then we set
\[
  2i+1=n_0+n_1p+\cdots+n_{j-1}p^{j-1}+(n_j+1)p^j.
\]
We have $2i+1<2k$ and $\bin{2k}{2i+1}-2k\equiv-n_0\not\equiv0\mod
p$.
\end{proof}

We now can prove our main result:

\begin{theorem}\label{mainth}
There exist quasitoric $SU$-manifolds $M^{2i}$, $i\ge5$, with
$s_i(M^{2i})=m_im_{i-1}$ if $i$ is odd and
$s_i(M^{2i})=2m_{i}m_{i-1}$ if $i$ is even. These quasitoric
manifolds represent polynomial generators
of~$\OSU\otimes\Z[\frac12]$.
\end{theorem}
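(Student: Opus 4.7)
My plan is to combine Lemmas~\ref{yL} and~\ref{yN} with the diamond-sum Construction~\ref{diamondsum}. For odd $i=2k+1\ge 5$, Lemma~\ref{yL} already furnishes a $\Z$-linear combination of the $SU$-bordism classes $[\widetilde L(n_1,n_2)]$ whose $s_i$-value is $m_im_{i-1}$; for even $i=2k\ge 6$, Lemma~\ref{yN} furnishes an analogous combination of classes $[\widetilde N(n_1,n_2)]$ whose $s_i$-value is $2m_im_{i-1}$. The remaining task is to realise each such linear combination as a single connected quasitoric $SU$-manifold.

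For this I would apply Construction~\ref{diamondsum} iteratively, absorbing negative coefficients by reversing the global orientation of the relevant summand: this flips the fundamental class (so the bordism class is negated) while preserving the isomorphism of Theorem~\ref{tangentqtoric}, hence preserves $c_1=0$ and the $SU$ property. The critical observation is that the diamond sum preserves the column-sum criterion for $c_1=0$: once a characteristic matrix is in refined form $(I\;|\;\varLambda_\star)$, the linear function $\varphi\colon\Z^n\to\Z$ with $\varphi(\lambda_i)=1$ is forced to be the coordinate sum, so the columns of $\varLambda_\star$ themselves sum to~$1$. Consequently the connected-sum matrix $\bigl(\varLambda_\star\;|\;I\;|\;\varLambda'_\star\bigr)$, or $\bigl(\varLambda_\star\;|\;I\;|\;I\;|\;\varLambda'_\star\bigr)$ in the same-sign case, still has all column sums equal to~$1$, and the inserted summand $S=S^2\times\cdots\times S^2$ with characteristic matrix $(I\;|\;I)$ is itself~$SU$. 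Iterating produces a single connected quasitoric $SU$-manifold $M^{2i}$ with the stipulated value of $s_i$.

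Finally, such an $M^{2i}$ represents a polynomial generator of $\OSU\otimes\Z[\frac12]$ by Theorem~\ref{yidescr}. Indeed, $\OSU\otimes\Z[\frac12]=\Z[\frac12][y_j\colon j>1]$ as a polynomial ring, and the characteristic number $s_i$ vanishes on all decomposable monomials in this ring (since $s_n$ is additive on direct sums of bundles and a top-degree class on a proper factor is zero); therefore the coefficient of $y_i$ in the expansion of $[M^{2i}]$ equals $s_i[M^{2i}]/s_i(y_i)$. By Theorem~\ref{yidescr} this ratio is $1$ in both parities, a unit in $\Z[\frac12]$, so $M^{2i}$ is itself a polynomial generator.

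The main obstacle I anticipate is the careful verification that a vertex with the appropriate sign exists in each $\widetilde L(n_1,n_2)$ and $\widetilde N(n_1,n_2)$, so that the characteristic matrix~\eqref{LambdaPn1n2} and that of Construction~\ref{constrN} can be brought into refined form while retaining column sums~$1$, and that this property persists through each step of the iterated diamond sum. This amounts to combinatorial bookkeeping with the matrices, aided by the freedom to prepend the trivial summand~$S$ whenever signs disagree.
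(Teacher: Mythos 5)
Your proposal is correct and follows essentially the same route as the paper: combine the linear combinations from Lemmata~\ref{yL} and~\ref{yN} into a single connected quasitoric $SU$-manifold via Construction~\ref{diamondsum}, handle negative coefficients by reversing the global orientation, and invoke the $s$-number criterion together with Theorem~\ref{yidescr} to conclude these are polynomial generators of $\OSU\otimes\Z[\frac12]$. Your explicit check that the refined-form connected-sum matrices retain column sums equal to~$1$ (so the $SU$ condition survives the diamond sum) is a detail the paper only asserts, but it is a correct elaboration rather than a different argument.
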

\begin{proof}
It follows from Lemmata~\ref{yL} and~\ref{yN} that there exist
linear combinations of $SU$-bordism classes represented by
quasitoric $SU$-manifolds with the required properties. We observe
that application of Construction~\ref{diamondsum} to two
quasitoric $SU$-manifolds $M$ and $M'$ produces a quasitoric
$SU$-manifold representing their bordism sum. Also, the
$SU$-bordism class $-[M]$ can be represented by the omnioriented
qua\-si\-toric $SU$-manifold obtained by reversing the global
orientation of~$M$. Therefore, we can replace the linear
combinations obtained using Lemmata~\ref{yL} and~\ref{yN} by
appropriate connected sums, which are quasitoric $SU$-manifolds.
\end{proof}

\subsection*{Concluding remarks}
By analogy with Theorem~\ref{6.11}, we may ask the following:

\begin{question}\label{question}
Which $SU$-bordism classes of dimension $>8$ can be represented by
quasitoric $SU$-manifolds?
\end{question}

Theorem~\ref{mainth} provides quasitoric representatives for the
elements $y_i\in\OSU_{2i}$ described in Theorem~\ref{yidescr} for
$i\ge5$. The elements $y_2$, $y_3$, $y_4$ cannot be represented by
quasitoric manifolds because of Theorem~\ref{lowdimqt}. Any
polynomial in these elements cannot be represented by a quasitoric
manifold for the same reason: the Krichever genus $\varphi_{\mathrm
K}\colon\OU\to R_{\mathrm K}$ vanishes on quasitoric
$SU$-manifolds, but $\varphi_{\mathrm K}$ is nonzero on any
polynomial in $y_2$, $y_3$, $y_4$. We thank Michael Wiemeler for
this observation.


The element $x_1^2\in\mathcal W_4$ (see Theorem~\ref{Wring}) is
represented by $9\C P^1\times\C P^1-8\C P^2$, which is also the
bordism class of a toric manifold over a 12-gon, with
characteristic numbers $c_1^2=0$ and $c_2=12$ (so $s_2=-24$). The
element $y_2=2x_1^2\in\OSU_4$ is represented by a \emph{K3
surface}, but not by a toric manifold.

The 6-sphere $S^6$ has a $T^2$-invariant almost complex structure
as the homogeneous space $G_2/SU(3)$ of the exceptional Lie group
$G_2$ (see~\cite{bo-hi58}), and therefore represents an
$SU$-bordism class in~$\OSU_6$. Its characteristic numbers are
$c_1^3=c_1c_2=0$ and $c_3=2$. Therefore, $s_3[S^6]=6=m_3m_2$, so
$S^6$ represents $y_3\in\OSU_6$.

It would be interesting to find good geometric representatives for
$y_4\in\OSU_8$, and also for the elements $w_{4k}\in\OSU_{8k}$
that control the 2-torsion in $\OSU$ (see Theorem~\ref{freetors}~(c)).
The image of $w_{4k}$ under the forgetful homomorphism
$\alpha\colon \OSU_{8k}\to \mathcal W_{8k}$ is $x_1^{4k}$, so it
is decomposable in~$\OU$ and has $s_{4k}[w_{4k}]=0$. The
conditions on the characteristic numbers specifying $w_{4k}$ are
given in~\cite[(19.3)]{co-fl66m}.



\begin{thebibliography}{99}

\bibitem{bo-hi58}
Armand Borel and Friedrich Hirzebruch. \emph{Characteristic
classes and homogeneous spaces. I.} Amer. J. Math.~\textbf{81}
(1958), 458--538.

\bibitem{bu-pa}
Victor Buchstaber and Taras Panov. \emph{Toric Topology.}
Mathematical Surveys and Monographs,~204. Amer. Math. Soc.,
Providence, RI, 2015.

\bibitem{b-p-r07}
Victor M. Buchstaber, Taras E. Panov and Nigel Ray. \emph{Spaces
of polytopes and cobordism of quasitoric manifolds}. Moscow Math.
J.~\textbf{7} (2007), no.~2, 219--242.

\bibitem{b-p-r10}
Victor M. Buchstaber, Taras E. Panov and Nigel Ray. \emph{Toric
genera}. Internat. Math. Res. Notices 2010, no.~16, 3207--3262.

\bibitem{bu-ra98r}
Victor M.~Buchstaber [Bukhshtaber] and Nigel~Ray. \emph{Toric
manifolds and complex cobordism}. Uspekhi Mat. Nauk {\bf 53}
(1998), no.~2, 139--140 (Russian); Russian Math. Surveys {\bf 53}
(1998), no.~2, 371--373 (English translation).

\bibitem{bu-us15}
Victor M.~Buchstaber and Alexei V.~Ustinov. \emph{Coefficient
rings of formal group laws.} Mat. Sbornik~{\bf 206} (2015), no.~11, 19--60 (Russian); 
Sb. Math.~{\bf 206} (2015), no.~11, 1524--1563 (English translation).

\bibitem{co-fl66m}
Pierre E. Conner and Edwin E. Floyd.  \emph{Torsion in
$SU$-bordism}. Mem. Amer. Math. Soc.~\textbf{60}, 1966.

\bibitem{da-ja91}
Michael W. Davis and Tadeusz Januszkiewicz. \emph{Convex
polytopes, Coxeter orbifolds and torus actions}. Duke Math.
J.~{\bf62} (1991), no.~2, 417--451.


\bibitem{gran97}
Andrew Granville. \emph{Arithmetic properties of binomial
coefficients. I. Binomial coefficients modulo prime powers}. In:
\emph{Organic mathematics (Burnaby, BC, 1995).} CMS Conf. Proc.,
20, Amer. Math. Soc., Providence, RI, 1997, pp.~253--276.

\bibitem{lu}
Zhi L\"u. \emph{Equivariant bordism of $2$-torus manifolds and
unitary toric manifolds--a survey.} Preprint (2014);
arXiv:1401.3052.

\bibitem{lu-wa}
Zhi L\"u and Wei Wang. \emph{Examples of quasitoric manifolds as
special unitary manifolds.} Math. Res. Lett., to appear;
arXiv:1310.3933.

\bibitem{ma-su08}
Mikiya Masuda and Dong Youp Suh. \emph{Classification problems of
toric manifolds via topology}. In: \emph{Toric Topology},
M.~Harada \emph{et al.}, eds. Contemp. Math.,~460. Amer. Math.
Soc., Providence, RI, 2008, pp.~273--286.

\bibitem{novi62}
Sergei P. Novikov. \emph{Homotopy properties of Thom complexes.}
Mat. Sbornik~\textbf{57} (1962), no.~4, 407--442 (Russian);
English translation at {\tt
http://www.mi.ras.ru/\~{}snovikov/6.pdf}.

\bibitem{ocha81}
Serge Ochanine [Oshanin]. \emph{The signature of \textrm{SU}-manifolds.}
Mat. Zametki {\bf13} (1973), no.~1, 97--102 (Russian); Math. Notes~{\bf13} (1973), no.~1, 57--60 (English translation).

\bibitem{st-ep62}
Norman E. Steenrod. \emph{Cohomology operations.} Lectures by
N.\,E.~Steenrod written and revised by D.\,B.\,A.~Epstein. Annals
of Mathematics Studies, 50. Princeton University Press, Princeton,
NJ, 1962.

\bibitem{ston68}
Robert E. Stong. \emph{Notes on Cobordism Theory}.  Math.
Notes,~7. Princeton Univ. Press, Princeton, NJ, 1968.

\bibitem{wilf}
Andrew Wilfong. \emph{Toric polynomial generators of complex
cobordism}. Algebr. Geom. Topol.~\textbf{16} (2016), no.~3, 1473--1491.
\end{thebibliography}
\end{document}